\documentclass{amsart}
\usepackage{latexsym}
\usepackage{amsmath}
\usepackage{amssymb}
\usepackage[all,cmtip,2cell]{xy}
\UseTwocells
\usepackage{bracket}
\usepackage{fancybox} 
\usepackage{ragged2e}

\newtheorem{thm}{Theorem}[section]
\newtheorem{prop}[thm]{Proposition}
\newtheorem{cor}[thm]{Corollary}
\newtheorem{lem}[thm]{Lemma}

\theoremstyle{definition}
\newtheorem{defn}[thm]{Definition}

\theoremstyle{remark}
\newtheorem{rem}[thm]{Remark}

%\let\raggedright=\RaggedRight

%%%%%%%%%%%%%%%%%%%%%%%%%%%%%%%%%%
%\begin{thm}
% \label{thm:1.1}
%
%\end{thm}
%
% \ref{thm:1.1}
%%%%%%%%%%%%%%%%%%%%%%%%%%%%%%%%%%
%
%\newcommand{\qed}{\hbox{\rule{6pt}{6pt}}}
%

%

%

%

%
\newcommand{\K}{{\mathbb R}}
\newcommand{\C}{{\mathcal C}}
\newcommand{\D}{{\mathcal D}}
\newcommand{\M}{{\mathcal M}}
\newcommand{\e}{\varepsilon}
\newcommand{\R}{{\mathbb R}}
%

%

%

%Figure 
\newcommand{\mapright}[1]{%
 \smash{\mathop{%
  \hbox to 1cm{\rightarrowfill}}\limits_{#1}}}
\newcommand{\maprightd}[2]{%
 \smash{\mathop{%
  \hbox to 1.2cm{\rightarrowfill}}\limits^{#1}\limits_{#2}}}
\newcommand{\mapleft}[1]{%
 \smash{\mathop{%
  \hbox to 1cm{\leftarrowfill}}\limits_{#1}}}
\newcommand{\mapleftu}[1]{%
 \smash{\mathop{%
  \hbox to 0.8cm{\leftarrowfill}}\limits^{#1}}}
\newcommand{\maprightu}[1]{%
 \smash{\mathop{%
  \hbox to 1cm{\rightarrowfill}}\limits^{#1}}}
\newcommand{\maprightud}[2]{%
 \smash{\mathop{%
  \hbox to 1cm{\rightarrowfill}}\limits^{#1}_{#2}}}
\newcommand{\mapleftud}[2]{%
 \smash{\mathop{%
  \hbox to 1cm{\leftarrowfill}}\limits^{#1}_{#2}}}

%%%%%%%%%%%%%%%%%%%%%%%%%%%%%%%%%%%%%%%%%%%%%%%%
% eqnlabel.sty
% by Atsushi SATO
% last modified 20 Mar 2006

\newcounter{eqn}[section]

\def\theeqn{\textnormal{(\thesection.\arabic{eqn})}}

\def\eqnlabel#1{%
  \refstepcounter{eqn}%
  \label{#1}%
  \leqno{\theeqn}}
%%%%%%%%%%%%%%%%%%%%%%%%%%%%%%%%%%%%%%%%%%%%%%%%

\begin{document}

\title{Simplicial cochain algebras for diffeological spaces %{\small --DRAFT--}
}

\footnote[0]{{\it 2010 Mathematics Subject Classification}: 
57P99, 55U10, 58A10.
\\ 
{\it Key words and phrases.} Diffeology, simplicial set, differential graded algebra, de Rham theorem, spectral sequence  

%This research was partially supported by a Grant-in-Aid for Scientific
%Research (B) 25287008 
%from Japan Society for the Promotion of Science.

Department of Mathematical Sciences, 
Faculty of Science,  
Shinshu University,   
Matsumoto, Nagano 390-8621, Japan   
e-mail:{\tt kuri@math.shinshu-u.ac.jp}
}

\author{Katsuhiko KURIBAYASHI}
\date{}

\begin{abstract} 
The original de Rham cohomology due to Souriau and the singular cohomology in {\it diffeology} are not isomorphic to each other in general. 
This manuscript introduces a singular de Rham complex endowed with an integration map into the singular cochain complex 
which gives the de Rham theorem for {\it every} diffeological space.  It is also proved that a morphism called the {factor map} from the original 
de Rham complex to the new one is a quasi-isomorphism for a manifold and, more general, a space with singularities. Moreover, Chen's iterated integrals are considered in a diffeological framework.  
As a consequence, we deduce that the bar complex of the original de Rham complex of a simply-connected diffeological space is quasi-isomorphic to the singular de Rham complex of the diffeological free loop space provided the factor map for the underlying diffeological space is a quasi-isomorphism. 
The process for proving the assertion yields the Leray--Serre spectral sequence and the Eilenberg--Moore spectral sequence in diffeology. 
\end{abstract}

\maketitle

\section{Introduction}
In the framework of diffeology, we prove the de Rham theorem and develop Chen's iterated integrals. 
Diffeological spaces were introduced by Souriau in the early 1980s \cite{So}. 
The notion generalizes that of a manifold. More precisely, the category $\mathsf{Mfd}$ of finite dimensional manifolds embeds into $\mathsf{Diff}$
the cateogory of diffeological spaces, which is complete, cocomplete and cartesian closed. As an advantage, we can {\it very naturally} define 
a function space consisting of smooth maps between manifolds in $\mathsf{Diff}$ so that the evaluation map is smooth without arguments on infinite dimensional 
manifolds; see \cite{IZ} and also \cite[Section 4]{C-S-W}. % for the topology of such a function space. 
It is worth mentioning the existence of adjoint functors between $\mathsf{Diff}$ and $\mathsf{Top}$ the category of topological spaces; 
see Appendix B for a brief summary of the functors. 
Thanks to reflective properties of the adjoint functors, the full subcategory of $\Delta$-generated (numerically generated or arc-generated) topological spaces, which contains all CW-complexes,  also embeds into $\mathsf{Diff}$; see Remark \ref{rem:Delta-generated_Top}. 
Thus in the category $\mathsf{Diff}$,  
it is possible to deal simultaneously with such topological spaces and manifolds without forgetting the smooth structure. 

The category $\mathsf{Diff}$ is equivalent to the category of concrete sheaves on a concrete site; see \cite{B-H}. Moreover, Watts and Wolbert \cite{W-W} 
have shown that $\mathsf{Diff}$ is closely related to the category of differentiable stacks with adjoint functors between them. As Baez and Hoffnung have 
mentioned in \cite[Introduction]{B-H}, we can use the larger category $\mathsf{Diff}$ 
for abstract constructions and the smaller one $\mathsf{Mfd}$ for theorems that rely on 
good control over local structure. 
In this manuscript, we develop 
cohomological methods for considering the local and global nature of diffeological spaces in future work; see the end of this section 
and the diagram (6.1) in Appendix B
consisting of many categories and functors related to $\mathsf{Diff}$.

The de Rham complex for a diffeological space was introduced in \cite{So} 
and it was applied, for example to moment maps, in \cite{IZ_MM, IZ_deRham}.  
Moreover, differential forms on diffeological spaces 
are effectively used in the study of bundles in $\mathsf{Diff}$. The reader is referred to \cite{M-W, Waldorf} and \cite[Future work]{C-W_bundles} 
for such a topic.  
We observe that the de Rham complex is isomorphic to the usual one via the {\it tautological map} defined in \cite{H-V-C} 
if the input diffeological space is a manifold. % and the comments before Lemma \ref{lem:extendability}. 

Let $(X, \D^X)$ be a diffeological space and 
$\Omega^*(X)$ the de Rham complex due to Souriau \cite{So}. It is regarded as a counterpart 
of the de Rham complex for developing Chen's iterated integrals \cite{C} in diffeology. 
In the de Rham calculus for diffeological spaces, Iglesias-Zemmour \cite{IZ_deRham} has introduced an integration
\[
\int^{\text{IZ}} : \Omega^*(X) \longrightarrow C^*_{\text{cube}}(X), 
\] 
which is a cochain map, and investigated its properties,     
where $C^*_{\text{cube}}(X)$ denotes the normalized cubic cochain complex whose $p$-simplices are smooth maps from ${\mathbb R}^p$ to $X$. However, the de Rham theorem in diffeology, 
which asserts that such an integration map induces an isomorphism of cohomology algebras, has not yet been established.  
Indeed, the de Rham theorem described with $\int^{\text{IZ}}$ does not hold for the irrational torus; 
see \cite[Section 8]{IZ_Cech}  and Remark \ref{rem:An_example}. 

In this manuscript, the de Rham theory in $\mathsf{Diff}$ is formulated in the context of simplicial objects. In particular, we introduce 
a new de Rham complex called the {\it singular de Rham complex} of a diffeological space and a morphism of cochain algebras 
from the original de Rham complex mentioned above to the new one; see Theorem \ref{thm:main}. 
The morphism connecting the original and new de Rham complexes is called the {\it factor map} for the underlying diffeological space. 
Theorem \ref{thm:main} also shows that 
the integration map $\int^{\text{IZ}}$ is the composite of the factor map and an appropriate integration map defined on the singular de Rham complex up to homotopy. 
Moreover, the theorem allows one to deduce that 
the de Rham theorem holds for {\it every} diffeological space in our setting; 
see Corollary \ref{cor:main}. 
It is proved that 
if the given diffeological space is a smooth CW-complex in the sense of Iwase and Izumida \cite{I-I} or it stems from a parametrized stratifold in the sense of 
Kreck \cite{Kreck}, then the factor map is a quasi-isomorphism. 
We observe that the class of such stratifolds contains manifolds and spaces with singularities; see \cite[3. Examples]{Kreck}. 
Furthermore, Theorem \ref{thm:main} allows us to conclude that the integration map $\int^{\text{IZ}}$ induces a morphism of algebras on the cohomology; 
see Corollary \ref{cor:main2}. 

The Chen iterated integral map \cite{C} is deeply related to the singular de Rham complex. Let $M$ be 
a simply-connected diffeological space and $LM$ the free loop space which fits into a particular pullback diagram in $\mathsf{Diff}$. 
Then the composite of the factor map and 
a variant of the Chen iterated integral map in diffeology gives a quasi-isomorphism from the bar construction on the de Rham complex due to Souriau 
to the singular de Rham complex of $LM$ as long as the factor map for $M$ is a quasi-isomorphism; see Theorems \ref{thm:the_second_main} and 
\ref{thm:general_main}. 
We observe that these results are obtained by an adaptation of the integration map in Theorem \ref{thm:main}. 

The proof of Theorem \ref{thm:main} relies on the {\it extendability} of simplicial cochain algebras in real and rational de Rham theory in \cite{B-G, F-H-T, H, S, W}. Moreover, the method of acyclic models \cite{E-M, B-G} is applied to our setting. 
The latter half of the theorem follows from the usual argument with the Mayer--Vietoris sequence; see \cite{I-I, Haraguchi} for 
applications of the sequence in diffeology. 
We also rely on 
properties of the $D$-topology for diffeological spaces investigated 
in \cite{C-S-W} through our study. 
Thus recent results in diffeology, the classical results and well-known methods in algebraic topology serve mainly to prove our assertions. 
It may seem that there is no new idea for studying diffeology in this manuscript. 
However, the reconsiderations clarify whether or not such results and methods are applicable in a diffeological framework. If not, we have modified proofs of such classical results. 

As an advantage, we have plenty of simplicial objects for homology of diffeological spaces; see Table \ref{table1} in Section \ref{sect7}. 
Indeed, there is a suitable choice of a simplicial set and a simplicial cochain algebra with which one deduces  
the de Rham theorem of the diffeological spaces as mentioned above. It is worthwhile mentioning that 
the de Rham complex, which we choose, definitely involves the simplicial arguments and cohomology developed  
in \cite{C-W, G, Hector, Kihara1, Kihara}.  

Another choice of simplicial sets for given diffeological spaces enables us to construct 
the Leray--Serre spectral sequence (LSSS) and the Eilenberg--Moore spectral sequence (EMSS) for 
{\it fibrations} in 
$\mathsf{Diff}$; see Theorems \ref{thm:LSSS} and \ref{thm:EMSS}.
By elaborate replacement of pullbacks with homotopy pullbacks for considering smooth lifts of fibrations,  we obtain the spectral sequences and then Theorem \ref{thm:general_main}, which explains  
a diffeological version of Chen's isomorphism induced by iterated integrals. 
This is another highlight in this article; see also Remark \ref{rem:highlight}. 
Computational examples of Theorem \ref{thm:general_main} include the LSSS and the EMSS for the irrational torus and its related diffeological spaces. 
One of the computations shows that the volume forms on the odd dimensional sphere $S^{2k+1}$ with $k\geq 1$ 
determines completely the singular de Rham cohomology of the diffeological free loop space $LS^{2k+1}$ via 
Chen's iterated integral map and the factor map for the sphere; see the formula (2.4). 

We mention here that in \cite{IZ_Cech}, the \v{C}ech-de Rham spectral sequence converging 
to the \v{C}ech cohomology of a diffeological space is introduced and discussed. 
Observe that the original de Rham cohomology appears on the vertical edge of the $E_2$-term of the spectral sequence; see also \cite{IZ_2019}. 
This result is recalled in \ref{AppC} Appendix C to discuss the injectivity of the homomorphism induced by the factor map on the cohomology; 
see Proposition \ref{prop:obstruction_for_Inj}. The topic of the injectivity is dealt with in further detail in \cite{K2020_1}.  

In future work, it is expected that the local systems 
in the sense of Halperin \cite{H} which we use in the proof of 
Theorem \ref{thm:general_main} determine rational homotopy theory for non-simply connected diffeological spaces and Sullivan diffeological spaces; see \cite{G-H-T, F-H-T_II}. 
Moreover, the singular de Rham complex may produce the same argument on $1$-minimal models  as in \cite{C_extensions, C-P} in diffeology. 

We also anticipate that the singular de Rham complex yields an invariant for diffeological stacks.
In fact, as seen in \cite[Definition 2.9]{R-V}, a Lie groupoid representing a differential stack can be replaced with 
a diffeological groupoid in investigating a diffeological stack. 
The simplicial nerve of a Lie groupoid ${\mathbb X}$ defines the de Rham cohomology of the stack corresponding 
to ${\mathbb X}$; see \cite[Definition 8]{Be}. 
Thus the singular de Rham complex introduced in this manuscript would give rise to an invariant for diffeological stacks. 
This will be discussed in \cite{K2020}.

%\end{rem}

%Section 2
\section{The main results} \label{sect2}

We begin by recalling the definitions of a diffeological space and the de Rham complex due to Souriau. 
A good reference for the subjects is the book \cite{IZ}. We refer the reader to \cite[\S 1.2]{C} and \cite[\S 2]{B-H} for 
Chen spaces and \cite{Stacey} for the comparison between diffeological spaces and Chen spaces.  

\begin{defn}%(\cite{So}) 
For a set $X$,  a set  $\D^X$ of functions $U \to X$ for each open set $U$ in ${\mathbb R}^n$ and for each $n \in {\mathbb N}$ 
is a {\it diffeology} of $X$ if the following three conditions hold: 
\begin{enumerate}
\item (Covering) Every constant map $U \to X$ for all open set $U \subset {\mathbb R}^n$ is in $\D^X$;
\item (Compatibility) If $U \to X$ is in $\D^X$, then for any smooth map $V \to U$ from an open set $V  \subset {\mathbb R}^m$, the composite 
$V \to U \to X$ is also in $\D^X$; 
\item
(Locality) If $U = \cup_i U_i$ is an open cover and $U \to X$ is a map such that each restriction $U_i \to X$ is in $\D^X$, 
then the map $U \to X$ is in $\D^X$. 
\end{enumerate}
\end{defn}

We call a pair $(X, \D^X)$ consisting of a set and a diffeology a {\it diffeological space}. An element of a diffeology $\D^X$ is called a {\it plot}. 
Let $(X, \D^X)$  be a diffeological space and $A$ a subset of $X$. The {\it sub-diffeology} $\D^A$ on $A$ is defined by the initial diffeology for the inclusion 
$i : A \to X$; that is, $p\in \D^A$ if and only if $i\circ p \in \D^X$. 

\begin{defn}
Let $(X, \D^X)$ and $(Y, \D^Y)$ be diffeological spaces. A map $X \to Y$ is {\it smooth} if for any plot $p \in \D^X$, the composite 
$f\circ p$ is in $\D^Y$. 
\end{defn}

All diffeological spaces and smooth maps form a category $\mathsf{Diff}$. 
%We recall the $D$-topology on a diffeological space. 
%\begin{defn}%(\cite{So}) 
Let $(X, \D^X)$ be a diffeological space. We say that a subset $A$ of $X$ is $D$-{\it open} 
if $p^{-1}(A)$ is open for each plot $p \in \D^X$, where the domain of the plot is equipped with the standard topology. 
This topology is called the {\it $D$-topology} on $X$. 
%\end{defn}
Observe that for a subset $A$ of $X$, the $D$-topology of the sub-diffeology 
on $A$ coincides with the sub-topology of the $D$-topology on $X$ if $A$ is $D$-open; see \cite[Lemma 3.17]{C-S-W}.  

We here recall the de Rham complex $\Omega^*(X)$ of a diffeological space $(X, \D^X)$ in the sense of 
Souriau \cite{So}. 
For an open set $U$ of ${\mathbb R}^n$, let $\D^X(U)$ be the set of plots with $U$ as the domain and 
$\Lambda^*(U) = \{h : U \longrightarrow \wedge^*(\oplus_{i=1}^{n} {\mathbb R}dx_i ) \mid h \ \text{is smooth}\}$
the usual de Rham complex of $U$.  Let $\mathsf{Open}$ denote the category consisting of open sets of Euclidian spaces and smooth maps between them.  
We can regard $\D^X( \ )$ and $\Lambda^*( \ )$  as functors from $\mathsf{Open}^{\text{op}}$ to $\mathsf{Sets}$ the category of sets. 
A $p$-{\it form} is a natural transformation from $\D^X( \ )$ to $\Lambda^*( \ )$. Then the de Rham complex $\Omega^*(X)$ is the 
cochain algebra consisting of $p$-forms for $p\geq 0$; that is, $\Omega^*(X)$ is the direct sum of 
\[
\Omega^p(X) := \Set{
\xymatrix@C35pt@R10pt{
\mathsf{Open}^{\text{op}} \rtwocell^{\D^X}_{\Lambda^p}{\hspace*{0.2cm}\omega} & 
\mathsf{Sets} }
| \omega \ \text{is a natural transformation}
}
\]
with the cochain algebra structure induced by that of $\Lambda^*(U)$ pointwisely.   
We mention that the interpretation above of the de Rham complex appears in \cite{P} and \cite{I-I}. 
The de Rham complex defined above is certainly a generalization of the usual de Rham complex of a manifold. 

%Remark 2.3
\begin{rem}\label{rem:tautological_map} 
Let $M$ be a manifold and $\Omega_{\text{deRham}}^*(M)$ the usual de Rham complex of $M$. 
We recall the {\it tautological map} $\theta : \Omega_{\text{deRham}}^*(M) \to \Omega^*(M)$ defined by 
\[
\theta(\omega) = \{ p^*\omega\}_{p \in {{\mathcal D}^{M}}}.
\] 
Then it follows that $\theta$ is an isomorphism of cochain algebras; see \cite[Section 2]{H-V-C}. 
\end{rem}

%Remark 2.4
%\begin{rem}
%The category $\mathsf{Open}$ is regarded as a concrete site endowed with coverages consisting of open covers; see \cite[Lemma 4.14]{B-H}. 
%Then the result \cite[Proposition 4.15]{B-H} yields that the functor $\D^X$ is a concrete sheaf on $\mathsf{Open}$. 
%On the other hand, the functor $\Lambda^p$ is a sheaf for each $p \geq 0$ but not concrete in general. 
%It is readily seen that $\Lambda^p$ is a concrete sheaf if and only if $p=0$. 
%\end{rem}

In order to describe the main theorem concerning the de Rham isomorphism, we recall certain associated simplicial sets and simplicial cochain algebras. 
Let ${\mathbb A}^{n}:=\{(x_0, ..., x_n) \in {\mathbb R}^{n+1} \mid \sum_{i=0}^n x_i = 1 \}$ 
be the affine space equipped with the sub-diffeology of ${\mathbb R}^{n+1}$. 
Let  $\Delta^n_{\text{sub}}$ denote 
the diffeological space, whose underlying set is the standard $n$-simplex $\Delta^n$, 
equipped with the sub-diffeology of the affine space ${\mathbb A}^{n}$.  Let $(A_{DR}^*)_\bullet$ be the simplicial cochain algebra
defined by $(A^*_{DR})_n := \Omega^*({\mathbb A}^{n})$ for each $n\geq 0$. 
We denote by $(\widetilde{A^*_{DR}})_\bullet$ the sub-simplicial cochain algebra of 
$\Omega^*(\Delta^\bullet_{\text{sub}})$ consisting of elements in the image of the map 
$j^*:  \Omega^*({\mathbb A}^n) \to \Omega^*(\Delta^n_{\text{sub}})$ 
induced by the inclusion $j : \Delta^n_{\text{sub}} \to {\mathbb A}^{n}$. 

Let $\Delta$ be the category which has posets $[n]:=\{0, 1, ..., n\}$ for $n\geq 0$ as objects and non-decreasing maps $[n] \to [m]$ for $n, m\geq 0$ as morphisms. By definition, a simplicial set is a contravariant functor from $\Delta$ to $\mathsf{Sets}$ the category of sets. 
For a diffeological space $(X, \D^X)$, let $S^D_\bullet(X)$ be the simplicial set defined by 
%\[
$ 
 S^D_\bullet(X):= \{ \{ \sigma : {\mathbb A}^n \to X \mid \sigma \ \text{is a $C^\infty$-map} \} \}_{n\geq 0}. 
$
%\]
We mention that $S^D_\bullet( \text{-})$ gives the {\it smooth singular functor} defined in \cite{C-W}.  
Moreover, let ${S^\infty_\bullet(X)}$ denote the simplicial set defined to be the image of the map
\[
{S^D_\bullet(X)} \to {S^D_\bullet(X)}_{\text{sub}}:= \{ \{ \sigma : \Delta^n_{\text{sub}} \to X \mid \sigma \ \text{is a $C^\infty$-map} \} \}_{n\geq 0}
\]
induced by the inclusion $j : \Delta^n_{\text{sub}} \to  {\mathbb A}^n$; 
see \cite{Hector} for the study of the simplicial set ${S^D_\bullet(X)}_{\text{sub}}$ in diffeology. 

Let $K$ be a simplicial set. We denote by $C^*(K)$ 
the cochain complex of maps from $K_p$ to ${\mathbb R}$ in degree $p$ and vanishing on degenerate simplices. The simplicial structure 
gives rise to the cochain algebra structure on $C^*(K)$; see \cite[10 (d)]{F-H-T} for example. 
In particular, the multiplication on $C^*(K)$ is the {\it cup product} defined by 
%\[
$f\cup g (\sigma) = (-1)^{pq}f(d_{p+1}\cdots d_{p+q}\sigma)\cdot g(d_0\cdots d_0 \sigma)$
%\]
for $f \in C^p(K)$ and $g \in C^q(K)$, where $\sigma \in K_{p+q}$ and $d_i$ denotes the $i$th face map of $K$. 
We also recall the simplicial cochain algebra $(C^*_{PL})_\bullet:=C^*(\Delta[\bullet])$, where $\Delta[n] =\text{hom}_\Delta( \text{-}, [n])$ 
is the standard simplicial set. 

For a simplicial cochain algebra $A_\bullet$,  we denote by $A(K)$ the cochain algebra  
\[
\mathsf{Sets^{\Delta^{op}}}(K, A_\bullet):= \Set{
\xymatrix@C35pt@R10pt{
\mathsf{\Delta}^{\text{op}} \rtwocell^{K}_{A_\bullet}{\hspace*{0.2cm}\omega} & 
\mathsf{Sets} }
| \omega \ \text{is a natural transformation}
}
\]
whose cochain algebra structure is induced by that of $A_\bullet$. 
Observe that, for a simplicial set $K$, the map $\nu :  C_{PL}^p(K) \to C^p(K)$ defined by $\nu(\gamma)(\sigma) = \gamma(\sigma)(id_{[p]})$ for 
$\sigma \in K_p$ gives rise to a natural isomorphism $C_{PL}^*(K) \stackrel{\cong}{\to} C^*(K)$ of cochain algebras; see \cite[Lemma 10.11]{F-H-T}.  
In particular, we have a cochain algebra of the form $A_{DR}^*(S^D_\bullet(X))$ for a diffeological space $X$. This is regarded as 
a diffeological variant of Sullivan's polynomial simplicial form for a topological space; see \cite{S}.

Moreover, we define a map 
$\alpha : \Omega^*(X) \to A_{DR}^*(S^D_\bullet(X))$ of cochain algebras by $\alpha(\omega)(\sigma) = \sigma^*(\omega)$ and define 
$\alpha' : \Omega(X) \to \widetilde{A^*_{DR}}(S^\infty_\bullet(X))$ similarly. We call the maps $\alpha$ and $\alpha'$ 
the {\it factor maps} for $X$. 
Naturality and other expected properties of the factor maps shall be discussed in Sections \ref{sect3.2} and \ref{sect6}.

One of the aims of this manuscript  is to relate cochain algebras induced by simplicilal objects mentioned above to one another.  
The following is a main theorem which describes such a relationship.

%Theorem 2.4
\begin{thm}\label{thm:main} \text{\em (}cf. \cite{E}, \cite[Theorem 9.7]{I-I}, \cite[Th\'eor\`emes 2.2.11, 2.2.14, 2.2.18]{G}\text{\em)} 
For a diffeological space $(X, \D^X)$, one has a homotopy commutative diagram 
\[
\xymatrix@C28pt@R25pt{
C^*({S^D_\bullet(X)}_{\text{\em sub}}) \ar[rd]_{=} \ar[r]_-{\simeq}^-\varphi & (C^*_{PL} \otimes A^*_{DR})(S^D_\bullet(X))  \ar[d]_(0.4){\text{\em mult } 
\! \circ (1\otimes \int)} & A^*_{DR}(S^D_\bullet(X)) \ar[l]^-{\simeq}_-\psi \ar[ld]^(0.35){\text{\ \ \ an ``integration"} \int}& \Omega^*(X) \ar[l]_-{\alpha} 
\ar[dl]^-{\int^{\text{\em IZ}}}
\\
&C^*({S^D_\bullet(X)}_{\text{\em sub}}) & C^*_{\text{\em cube}}(X) \ar[l]^-{\simeq}_-{l}& 
}
\]
in which $\varphi$ and $\psi$ are quasi-isomorphisms of cochain algebras and the integration map $\int$ is a morphism of cochain complexes. Here $\text{\em mult}$ denotes the multiplication on the cochain algebra $C^*({S^D_\bullet(X)}_{\text{\em sub}})$. 
Moreover, the factor map $\alpha$ is a quasi-isomorphism if $(X, \D^X)$ is a finite dimensional smooth CW-complex; 
see Definition \ref{defn:CW}, 
or stems from a parametrized stratifold via the functor $k$ in the diagram (6.1); see the three paragraphs following Definition \ref{defn:stratifold} and 
Remark \ref{rem:stratifolds}. 
\end{thm}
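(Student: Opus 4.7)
The plan is to separate the diagram into three pieces and tackle them in turn: the upper row of quasi-isomorphisms $\varphi$ and $\psi$ between simplicial cochain algebras evaluated on $S^D_\bullet(X)$, the lower triangles with the various integration maps, and finally the quasi-isomorphism assertion on $\alpha$ for diffeological spaces arising from CW-complexes or parametrized stratifolds.

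First I would verify that the simplicial cochain algebras $(A^*_{DR})_\bullet$ and $(C^*_{PL})_\bullet$ are \emph{extendable} in the sense of Halperin and Bousfield--Gugenheim, meaning that the restriction to the combinatorial boundary of each model simplex is surjective; for $(A^*_{DR})_\bullet$ this uses that smooth forms extend from the face inclusions $d_i({\mathbb A}^{n-1}) \subset {\mathbb A}^{n}$ via a partition of unity on a collar, while the $(C^*_{PL})_\bullet$ case is classical. With extendability in hand, for any simplicial set $K$ the projections from the bisimplicial tensor product induce natural quasi-isomorphisms of cochain algebras $\varphi : C^*_{PL}(K) \xrightarrow{\simeq} (C^*_{PL}\otimes A^*_{DR})(K)$ and $\psi : A^*_{DR}(K) \xrightarrow{\simeq} (C^*_{PL}\otimes A^*_{DR})(K)$. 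Naturality in $K$ and the required homotopy commutativity of the upper part of the diagram come from the acyclic-models technique in the form used by Kihara \cite{Kihara}, together with the identification $C^*_{PL}(K)\cong C^*(K)$ via $\nu$.

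Next I would construct $\int : A^*_{DR}(S^D_\bullet(X)) \to C^*(S^D_\bullet(X)_{\text{sub}})$ by integration over the standard simplex: for a plot $\sigma : {\mathbb A}^{n} \to X$ and $\omega \in A^*_{DR}(S^D_\bullet(X))$ one sets $\int(\omega)(\sigma) := \int_{\Delta^n_{\text{sub}}} j^*(\omega(\sigma))$, with $j : \Delta^n_{\text{sub}} \to {\mathbb A}^{n}$ the inclusion; Stokes' theorem on the affine simplex shows it is a cochain map. The triangle $l \circ \int^{\text{IZ}} \simeq \int \circ \alpha$ amounts to comparing cubical and simplicial integrations, which again yields to acyclic models, taking the standard cubes and the standard affine simplices as models. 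The remaining triangle involving $\text{mult}\circ (1\otimes \int)$ will commute essentially by the construction of $\varphi$ and $\psi$, once one observes that $\psi$ on a simplex $\sigma$ is determined by evaluation at $\mathrm{id}_{[n]}$ composed with restriction, so that $(1\otimes\int)\circ\psi$ reduces to the bare integration $\int$ after multiplication.

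For the final clause I would argue by a Mayer--Vietoris induction. Both $\Omega^*(-)$ and $A^*_{DR}(S^D_\bullet(-))$ satisfy a Mayer--Vietoris property for $D$-open covers (the former by the sheaf condition on plots, the latter using extendability of $(A^*_{DR})_\bullet$), and $\alpha$ is a quasi-isomorphism on a convex open subset of Euclidean space, where the classical Poincar\'e lemma applies on both sides. Inducting over the cells of a CW-complex, regarded diffeologically via the left adjoint recalled in Appendix B, and over the strata of a parametrized stratifold in the sense of Kreck, then yields that $\alpha$ is a quasi-isomorphism in both settings. The main obstacle I anticipate is verifying the Mayer--Vietoris square for $\Omega^*$ in the stratifold case, where the local structure is no longer that of a manifold; here the $D$-topology results of \cite{C-S-W} together with the simplicial arguments of \cite{Hector} should be the key tools to reduce to the manifold Mayer--Vietoris on each stratum and to control the compatibility along the attaching data.
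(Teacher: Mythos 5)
Your overall strategy coincides with the paper's: extendability of the simplicial cochain algebra plus the Poincar\'e lemma gives the zig-zag of quasi-isomorphisms $\varphi$, $\psi$ (the Bousfield--Gugenheim/Halperin argument), the integration map is defined simplex-by-simplex and shown to be a cochain map by Stokes, the triangles involving $\int^{\text{IZ}}$ and $l$ are handled by acyclic models, and the triangle with $\text{mult}\circ(1\otimes\int)$ follows from $\psi(\omega)=1\otimes\omega$ and $\int 1=1$. All of that matches Lemmas \ref{lem:extendability}--\ref{lem:PoincareLemma}, Proposition \ref{prop:3.4}, diagram (3.2), and items I)--III) of Section \ref{sub4.1}.

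There is, however, a genuine gap in your last step. You assert that $\Omega^*(-)$ satisfies Mayer--Vietoris for $D$-open covers ``by the sheaf condition on plots.'' The sheaf condition only gives left-exactness of
$0 \to \Omega^*(X) \to \Omega^*(U)\oplus\Omega^*(V) \to \Omega^*(U\cap V)$;
the long exact sequence in cohomology additionally requires surjectivity onto $\Omega^*(U\cap V)$, and that step needs a smooth partition of unity subordinate to the cover (to write $\omega = \rho_V\omega - (-\rho_U\omega)$ with each piece extending). For a general diffeological space no such partition exists, and indeed Mayer--Vietoris for Souriau's complex is not available in general --- this is exactly why the conclusion about $\alpha$ is restricted to the stated classes. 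The paper's proof supplies the missing ingredient explicitly: for a CW-complex one uses a partition of unity of $CK$ with respect to the $D$-topology, and for a parametrized stratifold one uses Kreck's partition of unity together with Lemma \ref{lem:D-top} (open sets of the underlying space $S$ are $D$-open in $k(S,\C)$), which is what lets the topological cover be used diffeologically. Your anticipated ``main obstacle'' in the stratifold case is thus real, but the tools you propose (the simplicial arguments of \cite{Hector}) do not resolve it; the resolution is the partition of unity plus the $D$-openness lemma. A secondary, smaller imprecision: the Mayer--Vietoris/excision property for $A^*_{DR}(S^D_\bullet(-))$ comes not from extendability but from the barycentric subdivision argument on ${S^D_\bullet(X)}_{\text{sub}}$, made possible by Kihara's embedding of this complex into the singular chains of $DX$.
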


The homology $H(C^*({S^D_\bullet(X)}_{\text{sub}}))$ was introduced in \cite{Hector}.  
The latter half of Theorem \ref{thm:main} gives answers to \cite[Probl\`{e}me D]{Hector}, which asks conditions on a diffeological space 
for the de Rham theorem on the cochain complex due to Souriau to hold. As it turns out, the theorem holds for all diffeological spaces if 
the cochain complex is replaced with the {\it singular de Rham complex} $A^*_{DR}(S^D_\bullet( \ ))$.   

%Corollary 2.5
\begin{cor} \label{cor:main}
For every diffeological space $(X, \D^X)$, the integration map 
\[
%$
\int : A^*_{DR}(S^D_\bullet(X))) \to C^*({S^D_\bullet(X)}_{\text{\em sub}})
%$
\]
in Theorem \ref{thm:main} induces an isomorphism of algebras on the cohomology. 
\end{cor}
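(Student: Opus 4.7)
The plan is a pure diagram chase in the homotopy commutative diagram of Theorem \ref{thm:main}, importing all multiplicativity data from the theorem itself. For brevity I set $m := \mathrm{mult}\circ (1\otimes \int)$ for the central vertical arrow. Since $\varphi$ and $\psi$ are quasi-isomorphisms of cochain algebras, $H^*(\varphi)$ and $H^*(\psi)$ are, in particular, isomorphisms of graded algebras.

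First I would read off the multiplicativity of $H^*(m)$ from the left triangle. Its homotopy commutativity is the relation $m\circ \varphi \simeq \mathrm{id}$, which on cohomology yields $H^*(m)\circ H^*(\varphi) = \mathrm{id}$. Thus $H^*(m) = H^*(\varphi)^{-1}$ is itself an isomorphism of graded algebras, even though $m$ need not be an algebra morphism at the chain level. I would then apply the middle triangle, whose homotopy commutativity reads $m\circ \psi \simeq \int$; passing to cohomology gives
$$H^*\!\left(\int\right) = H^*(m)\circ H^*(\psi),$$
a composition of two isomorphisms of graded algebras, and hence itself an isomorphism of graded algebras. This is precisely the statement of Corollary \ref{cor:main}.

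The only conceptual subtlety, and hence the mild ``hard part,'' is that $\int$ is only asserted in Theorem \ref{thm:main} to be a morphism of cochain complexes, so its multiplicativity on cohomology is not visible at the chain level. It is forced here only because the diagram factors $\int$ up to homotopy through the cochain algebra quasi-isomorphism $\psi$ followed by $m$, whose cohomological multiplicativity is certified in turn by the left triangle with $\varphi$. All genuine technical content -- namely the construction of $\varphi$, $\psi$, and the two relevant homotopies -- is already accomplished in the proof of Theorem \ref{thm:main}.
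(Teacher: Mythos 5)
Your argument is correct and is essentially the paper's own proof: the paper likewise deduces from the left triangle that $\mathrm{mult}\circ(1\otimes\int)$ is a quasi-isomorphism (with multiplicative inverse $H^*(\varphi)^{-1}$ on cohomology) and then uses the homotopy commutativity of the middle triangle, $\int \simeq \mathrm{mult}\circ(1\otimes\int)\circ\psi$, to conclude. Your explicit remark that the multiplicativity of $H^*(\int)$ is forced by this factorization, rather than visible at the chain level, is exactly the point the paper leaves implicit.
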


The right square in Theorem \ref{thm:main} is homotopy commutative.  
This leads to an obstruction for $\int^{\text{IZ}}$ to induce an isomorphism on the cohomology. 

%Corollary 2.6
\begin{cor}\label{cor:main2}{\em (i)} The functor $H^*({S^D_\bullet( \ )}_{\text{\em sub}})$ from the category 
$\mathsf{Diff}$ to the category of graded algebras is 
homotopy invariant and hence so is the de Rham cohomology functor $H^*(A^*_{DR}(S^D_\bullet( \ )))$. \\
 {\em (ii)} The integration map $\int^{\text{\em IZ}} : \Omega^*(X) \longrightarrow C^*_{\text{\em cube}}(X)$ induces a morphism of algebras on the cohomology. \\
{\em (iii)} The integration map $\int^{\text{\em IZ}}$ induces an isomorphism of  algebras on the cohomology if and only if the factor map $\alpha$  
in Theorem \ref{thm:main} does. 
\end{cor}

We observe that in general, the factor map does not induce an isomorphism on the cohomology. 
This is clarified in Remark \ref{rem:An_example} below. 

In \cite{I-I},  Iwase and Izumida proved the de Rham theorem for a smooth CW-complex by using cubic de Rham cohomology, which admits 
the Mayer--Vietoris sequence for {\it every} diffeological space. 
Let $X$ be a diffeological space. Then the excision axiom for the homology of $C^*({S^D_\bullet(X)}_{\text{sub}})$ holds with respect to 
the $D$-topology for $X$ and hence for the homology $H(A_{DR}^*(S^D_\bullet(X)))$; 
see Section \ref{sect5} for details. Thus we have the Mayer--Vietoris exact sequence for $H(A_{DR}^*(S^D_\bullet(X)))$ with respect to a $D$-open cover.  
%We also observe that the map $\alpha$ 
%in Theorem \ref{thm:main} is a morphism of cochain algebras. 

An application of the integration map in Theorem \ref{thm:main} is related to Chen's iterated integrals \cite{C, C2}.  
Let $M$ be a manifold and $M^I$ the Chen space of smooth free paths. 
Then we have the pullback 
\[
\xymatrix@C35pt@R18pt{
(LM)_{\text{Chen}} \ar[r]^{\widetilde{\Delta}} \ar[d]_{ev} & M^I \ar[d]^{(\varepsilon_0, \varepsilon_1)} \\
M \ar[r]_{\Delta} & M\times M
}
\eqnlabel{add-0}
\]
of the free path fibration
 $(\varepsilon_0, \varepsilon_1) : M^I \to M\times M$ along the diagonal map $\Delta : M \to M\times M$ 
in the category $\mathsf{ChenSp}$ of Chen spaces \cite{B-H, Stacey}, 
where $\varepsilon_i$ is the evaluation map at $i$; see \cite[Section 1.2]{C}.
We also recall Chen's iterated integral 
\[
\mathsf{It} : \Omega^*(M)\otimes B(\Omega^*(M)) \to \Omega^*((LM)_{\text{Chen}})_{\text{C}}
\]
which is a morphism of chain complexes; see \cite[(2.1)]{C2} and \cite[THEOREM 4.2.1]{C}. Here 
$\Omega^* (X)_{\text{C}}$ denotes the de Rham complex for a Chen space $X$ which is a prototype 
of the original de Rham complex of a diffeological space; see \cite[2.1]{C}. Moreover, 
the source complex is the bar construction of the usual de Rham complex; see \cite[Section 2]{C_bar} and \cite[Section 1.1]{P}. 
More precisely, let $\omega_i$ be a differential $p_i$-form in $\Omega^*(M)$ for each $1\leq i \leq k$ and $q : U\to M^I$ 
a plot of the Chen space $M^I$. We define 
$\widetilde{\omega_{iq}}$ by $\widetilde{\omega_{iq}}:=(id_U \times t_i)^*q_\sharp^*\omega_i$, where $q_\sharp : U\times I \to M$ is the adjoint to 
$q$ and 
\[
t_i : {\bf \Delta}^k :=\{(x_1, ..., x_k) \in {\mathbb R}^k \mid 0\leq t_1\leq \cdots \leq t_k \leq 1\} \to I
\]
denotes the projection in the $i$th factor. 
By using integration along the fibre of the trivial fibration $U \times {\bf \Delta}^k \to U$, the iterated integral $(\int \omega_1\cdots \omega_k)_q$ is defined by
\[
(\int \omega_1\cdots \omega_k)_q := \int_{{\bf \Delta}^k} \widetilde{\omega_{1q}}\wedge \cdots \wedge \widetilde{\omega_{kq}}. 
\]
Then by definition, Chen's iterated integral $\mathsf{It}$ has the form 
\[
\mathsf{It}(\omega_0[\omega_1 | \cdots | \omega_k])= 
ev^*(\omega_0)\wedge \widetilde{\Delta^*} (\int \omega_1\cdots \omega_k). 
\eqnlabel{add-1}
\]
We can consider the same diagram as (2.1) in $\mathsf{Diff}$ in which $M$ is a general diffeological space 
and $M^I$ is the diffeological space endowed with the functional diffeology. 
The pullback is denoted by $ev : LM \to M$. Modifying the definition of Chen's iterated integral in $\mathsf{Diff}$, we have a morphism 
$\mathsf{It} : \Omega^*(M)\otimes \overline{B}(A) \to \Omega^*(LM)$ of differential graded $\Omega^*(M)$-modules; 
see Section \ref{sect7} for more details. 
We choose a cochain subalgebra $A$ of $\Omega^*(M)$ which satisfies the condition that $A^i = \Omega^i(M)$ for $i>1$, $A^0=\R$ and 
$A^1\cap d\Omega^0(M) =0$. 
The integration map in Theorem \ref{thm:main} and careful treatment of a local system in the sense of Halperin \cite{H} with respect to the evaluation map $M^I \to M\times M$ in (2.1) enable us to deduce the following pivotal theorem. 

%Theorem 2.7
\begin{thm}\label{thm:the_second_main} Let $M$ be a simply-connected diffeological space whose cohomology $H^i(A_{DR}(S^D_\bullet(M)))$ is of finite dimension for each $i \geq 0$. Suppose that the factor map for $M$ is a quasi-isomorphism. Then the composite 
$$\alpha \circ  \mathsf{It} : \Omega^*(M)\otimes \overline{B}(A) \to 
\Omega^*(LM) \to A^*_{DR}(S^D_\bullet(LM))$$ is a quasi-isomorphism of 
$\Omega^*(M)$-modules.  
\end{thm}

% Remark 2.8
\begin{rem}\label{rem:target} Let $M$ be a Chen space; see \cite{B-H}. 
We observe that the image ${\mathcal Chen} (M)$ of Chen's iterated integral map is a cochain {\it subalgebra} of the de Rham complex denoted by 
$\Omega^*((LM)_{\text{Chen}})_{\text{C}}$. The subalgebra is isomorphic to the bar complex $\Omega^*(M)\otimes \overline{B}(A)$ mentioned above 
as a cochain complex; see \cite[Theorem 4.2.1]{C}.  Moreover, the result \cite[Theorem 0.1]{C2} 
asserts that ${\mathcal Chen} (M)$ is quasi-isomorphic to the singular cochain complex $C^*((LM)_{\text{top}})$ 
if $M$ is a manifold, where $(LM)_{\text{top}}$ denotes the function space of continuous maps from $S^1$ to $M$ with compact-open topology. 
However, it seems that the relationship on cohomology between ${\mathcal Chen} (M)$ and 
the cochain algebra $\Omega^*((LM)_{\text{Chen}})_{\text{C}}$ itself is obscure. 
On the other hand, Theorem \ref{thm:the_second_main} and its generalization 
Theorem \ref{thm:general_main} below reveal that the singular de Rham complex functor $A_{DR}^*(S^D_\bullet( \ ))$ gives rise to 
a relevant codomain of Chen's iterated integral $\mathsf{It}$. 
\end{rem}

%%%%%%% Computational examples %%%%%%%%%%%% Remark 2.9
We give computational examples of the singular de Rham cohomology algebras.% of diffeological spaces. 
%Spectral sequences described in Theorems \ref{thm:LSSS} and \ref{thm:EMSS} below are applied to the computations. 

%Remark 2.9
\begin{rem} \label{rem:An_example}
Let $\gamma$ be an irrational number, namely $\gamma \in {\mathbb R}\backslash {\mathbb Q}$. Consider the two dimensional torus 
$T^2 :=\{(e^{2\pi i x} , e^{2\pi i y}) \mid (x, y) \in {\mathbb R}^2 \}$ which is a Lie group, and the subgroup 
$S_\gamma := \{ (e^{2\pi i t} , e^{2\pi i \gamma t}) \mid t \in {\mathbb R}\}$ of $T^2$.   Then the {\it irrational torus} $T_\gamma$ is defined by the quotient $T^2/S_\gamma$ with the quotient diffeology. 
Since $S_\gamma$ is a dense subgroup, it follows that the topology of the homogeneous space $T^2/S_\gamma$ is trivial and hence it is contractible. 

In the category $\mathsf{Diff}$, we have a principal diffeological fibre bundle of the form $S_\gamma \to T^2 \stackrel{\pi}{\to} T_\gamma$; see \cite[8.15]{IZ}.
Let $f : M \to T_\gamma$ be a smooth map from a diffeological space $M$.  Then we obtain a principal diffeological bundle (*) : 
$S_\gamma \to M\times_{T_\gamma}T^2 \stackrel{\pi'}{\to} M$ via the pullback construction along the map $f$. 
For a diffeological space $X$, we may write $A^*(X)$ for the cochain algebra $A_{DR}^*(S^D_\bullet(X))$. A diffeological fibre bundle with a diffeological group as the fibre is a fibration in the sense of Christensen and Wu; see \cite[Propositions 4.28 and 4.30]{C-W}. Then the Leray--Serre spectral sequence in Theorem \ref{thm:LSSS} for the fibration (*) allows us to deduce that $\pi'$ gives rise to an isomorphism 
\[
\xymatrix@C18pt@R0pt{
 (\pi')^* : H^*(A^*(M)) \ar[r]^{\cong} &  H^*(A^*(M\times_{T_\gamma}T^2))
}
\eqnlabel{add-2}
\] 
of algebras.  We observe that the local system ${\mathcal H}^*(S_\gamma)$ is simple. In fact, 
the fibre $S_\gamma$ is diffeomorphic to $({\mathbb R}, +)$ 
as a Lie group and hence it is contractible in $\mathsf{Diff}$. Thus we have isomorphisms 
%\[
$
\xymatrix@C18pt@R0pt{
\wedge (dt_1, dt_2) \cong H^*_{\text{deRham}}(T^2) \ar[r]^-{\alpha}_-{\cong}& H^*(A^*(T^2))  & H^*(A^*(T_\gamma)). \ar[l]_-{(\pi)^*}^-{\cong}
}
$
%\]
%\[
%\wedge (dt_1, dt_2) \cong H^*_{\text{deRham}}(T^2) \maprightd{\alpha}{\cong} 
% H^*(A^*(T^2)) \maprightd{(\pi)^*}{\cong}  H^*(A^*(T_\gamma)).
%\]
Here $H^*_{\text{deRham}}(T^2)$ is the usual de Rham cohomology algebra of the manifold $T^2$ and $dt_i$ denotes the image $(pr_i)^*(d\theta)$ of the volume form $d\theta \in H^1(T)$ of the one dimensional torus $T$ 
by the map $(pr_i)^*$ induced by the projection $pr_i$ 
on $i$th factor.  
We recall the diffeomorphism $\psi : {\mathbb R}/({\mathbb Z}+\gamma {\mathbb Z}) \to T_\gamma$ defined by 
$\psi(t) = (0, e^{2\pi i t})$ in \cite[Exercise 31, 3)]{IZ}. Then the isomorphism 
$p^* : \Omega^*({\mathbb R}/({\mathbb Z}+\gamma {\mathbb Z})) \stackrel{\cong}{\to} (\wedge^*({\mathbb R}), d \equiv 0)$
induced by the subduction $p : {\mathbb R} \to {\mathbb R}/({\mathbb Z}+\gamma {\mathbb Z})$ in 
\cite[Exercise 119]{IZ} 
fits into the commutative diagram 
\[
\xymatrix@C35pt@R15pt{
\Omega^*(T) \ar[r]^{\rho^*} & \Omega^*({\mathbb R}^1) \\
\Omega^*(T^2) \ar[u]^{(in_2)^*} & \wedge^*({\mathbb R}^1) \ar@{>->}[u]\\
\Omega^*(T_\gamma) \ar[r]^-{\psi^*}_-{\cong} \ar[u]^{\pi^*} & \Omega^*({\mathbb R}/({\mathbb Z}+\gamma {\mathbb Z})), \ar[u]_{p^*}^{\cong}
}
\]
where $\rho : {\mathbb R} \to T$ denotes the projection and $in_2$ is the inclusion 
in the second factor. 
We may assume that $\rho^*(d\theta)=dt$ for the constant differential form $dt \in \wedge^1({\mathbb R})$. 
Then it follows that 
$\pi^*(\psi^*)^{-1}(p^*)^{-1}dt = dt_2$.   
In fact, we write $a_1 dt_1+a_2 dt_2$ for $\pi^*(\psi^*)^{-1}(p^*)^{-1}dt$. The commutativity of the diagram above implies that $a_2=1$. 
We see that $(a_1 dt_1+a_2 dt_2)^2 = (\pi^*(\psi^*)^{-1}(p^*)^{-1}dt)^2=0$ and hence $a_1 = 0$. 
Moreover, the naturality of the factor map $\alpha$ in Theorem \ref{thm:main} gives a commutative diagram 
\[
\xymatrix@C35pt@R18pt{
A^*(T^2) & \Omega^*(T^2) \ar[l]_{\alpha}^{\simeq} \\
A^*(T_\gamma) \ar[u]^{\pi^*}_{\simeq}& \Omega^*(T_\gamma).  \ar[l]^{\alpha} \ar[u]_{\pi^*} 
}
\]
Thus we see that the morphism $H(\alpha)$ of algebras from the original de Rham cohomology of $T_\gamma$ to the singular de Rham cohomology is a non-surjective monomorphism; see \ref{AppC} Appendix C for a more general consideration on the injectivity. % of $H(\alpha)$. 
Here it is worth to mention that the irrational torus $T_\gamma$ is not homotopy equivalent to the torus $T^2$. In fact, it follows that 
the cohomology $H^*(\Omega^*( \ ))$ is homotopy invariant for diffeological spaces; see \cite[6.88]{IZ}. While $H^1(\Omega^*(T^2)) \cong {\mathbb R}^2$, 
the computation above shows that $p^* : H^1(\Omega^*(T_\gamma)) \stackrel{\cong}{\to} {\mathbb R}^1$.

We recall the isomorphism $(\pi')^*$ in (2.3). Suppose that $M$ is simply connected. Then the comparison of the EMSS's  in Theorem \ref{thm:EMSS} for 
$LM$ and 
$L(M\times_{T_\gamma}T^2)$ allows us to obtain an algebra isomorphism  
\[
(L\pi')^* : H^*(A^*(LM)) \stackrel{\cong}{\longrightarrow} 
H^*(A^*(L(M\times_{T_\gamma}T^2)).
\]
Thus if $H^*(A^*(M))\cong H^*(A^*(S^{2k+1}))$ as an algebra with $k\geq 1$, then we see that 
\[
\xymatrix@C18pt@R0pt{
H^*(A^*(L_{\text{free}}(M\times_{T_\gamma}T^2))) \cong \wedge(\alpha \circ  \mathsf{It}({(\pi')}^*(\omega)))
\otimes {\mathbb R}[\alpha \circ  \mathsf{It} (1\otimes {(\pi')}^*(\omega))]
}
\eqnlabel{add-31}
\] 
as an $H^*(A^*(M))$-algebra, where $\omega$ is the volume form of $M$. 
In fact, the result follows from Theorem \ref{thm:the_second_main} and \cite[Theorem 2.1 and Corollary 2.2]{K1996}. 
This is the first computational example of the singular de Rham cohomology algebra of a diffeological loop space. Moreover, 
Corollary \ref{cor:main} asserts that we can also determine the singular cohomology algebra of $L(M\times_{T_\gamma}T^2)$ 
with coefficients in ${\mathbb R}$. 
%
%Observe that we require no $D$-topological condition on $M$ which is needed for the classification theorem of bundles over 
%a diffeological space; see \cite{M-W, C-W_bundles}. 
\end{rem}

%%%%%%%%%%%%%%%%

The rest of this manuscript  is organized as follows. 
In order to prove Theorem \ref{thm:main}, the extendability of the simplicial cochain algebras $A^*_{DR}$ and  $\widetilde{A^*_{DR}}$ 
is verified in Section \ref{sect3}. 
Section \ref{sect3.2} explains the integration map and the factor map $\alpha$ in Theorem \ref{thm:main}. 
The theorem and Corollaries \ref{cor:main} and \ref{cor:main2} 
are proved in Section \ref{sect5}.  In Section \ref{sect7}, after modifying Chen's iterated integrals from a diffeological point of view,  
we prove Theorem \ref{thm:the_second_main} as a corollary of a more general result (Theorem \ref{thm:general_main}). 

In Appendix A, we recall the acyclic model theorem for cochain complexes.
In Appendix B, the definitions of a smooth CW complex and a parametrized stratifold are recalled. 
Moreover, we briefly summarize functors between categories related to our subjects 
in this article. In Appendix C, we discuss the injectivity of $H(\alpha)$ induced by the factor map $\alpha$ on cohomology 
with the \v{C}ech--de Rham  spectral sequence introduced 
in \cite{IZ_Cech, IZ_2019}.

%Section 3
\section{Preliminaries}
\subsection{Extendability of the simplicial cochain algebra  $\widetilde{A^*_{DR}}$.}\label{sect3}
We begin with the definition of the extendability of a simplicial object. 
The notion plays an important role in the proofs of Theorems \ref{thm:main} and \ref{thm:the_second_main}.  
% in a category $\C$. 
 
\begin{defn} \label{defn:extendableOb}
A simplicial object $A$ in a category $\C$ is {\it extendable} if for any $n$, every subset set ${\mathcal I} \subset \{0, 1, ..., n\}$ and any elements 
$\Phi_i \in A_{n-1}$ for $i  \in {\mathcal I}$ which satisfy the condition that $\partial_i\Phi_j = \partial_{j-1} \Phi_i$ for $i <j$, there exists an element 
$\Phi \in A_n$ such that $\Phi_i = \partial_i \Phi$ for $i \in {\mathcal I}$.  
\end{defn}

With Remark \ref{rem:tautological_map} in mind, we prove the following lemma due to Emoto \cite{Emoto}. Though 
the proof indeed uses the same strategy as in \cite[13.8 Proposition]{H} and \cite[Lemma 10.7 (iii)]{F-H-T}, 
we introduce it for the reader. 

%Lemma 3.2
\begin{lem} \label{lem:extendability} 
The simplicial differential graded algebra $(\widetilde{A^*_{DR}})_\bullet$ is extendable. 
\end{lem}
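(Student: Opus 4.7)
The plan is to induct on $k=|\mathcal{I}|$, mirroring the telescoping argument of Halperin \cite[13.8 Proposition]{H} and F\'elix--Halperin--Thomas \cite[Lemma 10.7(iii)]{F-H-T} but replacing polynomial forms by smooth ones. Via the tautological isomorphism $\theta$, every $\Phi\in(\widetilde{A^*_{DR}})_m$ lifts to a classical smooth form $\tilde\Phi\in\Omega^*_{\text{deRham}}(\mathbb{A}^m)$ with $\Phi=j^*\tilde\Phi$, and on the manifold $\mathbb{A}^m\cong\mathbb{R}^m$ one has access to smooth affine retractions, bump functions, and Hadamard's lemma.

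For the base case $k=1$, I would pick a smooth affine retraction $r:\mathbb{A}^n\to\mathbb{A}^{n-1}$ with $r\circ d'_i=\mathrm{id}$, where $d'_i$ denotes the affine face inclusion, and set $\Phi:=j^*r^*\tilde\Phi_i$; naturality of $j^*$ yields $\partial_i\Phi=j^*(r\circ d'_i)^*\tilde\Phi_i=\Phi_i$. For the inductive step with $\mathcal{I}=\{i_1<\cdots<i_k\}$, I would apply the induction hypothesis to $\mathcal{I}\setminus\{i_1\}$ to obtain $\Psi\in(\widetilde{A^*_{DR}})_n$ with $\partial_{i_j}\Psi=\Phi_{i_j}$ for $j\geq 2$, and set $\Omega:=\Phi_{i_1}-\partial_{i_1}\Psi$. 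The simplicial identity $\partial_a\partial_b=\partial_{b-1}\partial_a$ for $a<b$ combined with the compatibility hypothesis forces $\partial_{i_j-1}\Omega=0$ for every $j\geq 2$; the problem therefore reduces to constructing $\Xi\in(\widetilde{A^*_{DR}})_n$ with $\partial_{i_1}\Xi=\Omega$ and $\partial_{i_j}\Xi=0$ for $j\geq 2$, after which $\Phi:=\Psi+\Xi$ completes the induction.

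The main obstacle is the construction of $\Xi$: it must extend $\Omega$ along one face and simultaneously vanish on several others. My strategy is to seek $\tilde\Xi\in\Omega^*(\mathbb{A}^n)$ of the form $\tilde\Xi=\bigl(\prod_{j\geq 2}x_{i_j}\bigr)\cdot\tilde\Xi'$, which automatically vanishes on each face $\{x_{i_j}=0\}$ for $j\geq 2$; the constraint $(d'_{i_1})^*\tilde\Xi=\tilde\Omega$ then becomes a divisibility condition on $\tilde\Omega$ by the coordinates $y_{i_j-1}$ on $\mathbb{A}^{n-1}$, which by Hadamard's lemma is equivalent to the vanishing of $\tilde\Omega$ on each hyperplane $\{y_{i_j-1}=0\}$ as a smooth form. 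The subtle point --- and the technical heart of the proof --- is that the hypothesis $\partial_{i_j-1}\Omega=0$ only gives vanishing of $(d'_{i_j-1})^*\tilde\Omega$ pointwise on the simplex $\Delta^{n-2}_{\text{sub}}$, not on the whole affine hyperplane; one must therefore first modify the lift $\tilde\Omega$ iteratively by adding elements of $\ker j^*$, which does not change $\Omega=j^*\tilde\Omega$, so as to arrange that the face restrictions vanish identically. This last adjustment, built from smooth extensions via affine retractions onto $\{y_{i_j-1}=0\}$ multiplied by smooth cut-offs supported away from $\Delta^{n-1}_{\text{sub}}$, is the step that replaces the straightforward polynomial-division argument used in the PL case.
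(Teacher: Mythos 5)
Your inductive skeleton (reduce to: given $\Omega=\Phi_{i_1}-\partial_{i_1}\Psi$ with $\partial_{i_j-1}\Omega=0$ for $j\ge 2$, produce $\Xi$ with $\partial_{i_1}\Xi=\Omega$ and $\partial_{i_j}\Xi=0$) and your base case are fine, but the construction of $\Xi$ has a genuine gap. The ansatz $\tilde\Xi=\bigl(\prod_{j\ge 2}x_{i_j}\bigr)\tilde\Xi'$ forces $\Omega$ to be divisible, modulo $\ker j^*$, by $\prod_{j\ge 2}y_{i_j-1}$, and this divisibility does \emph{not} follow from $\partial_{i_j-1}\Omega=0$. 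The reason is not only the simplex-versus-hyperplane issue you flag, but a more basic one: the pullback along a face inclusion annihilates every component of $\tilde\Omega$ containing $dy_{i_j-1}$ no matter what its coefficient is, so the hypothesis gives no control over these ``normal'' components and Hadamard's lemma cannot be applied to them. Modifying the lift by $\ker j^*$ cannot repair this, because any form in $\ker j^*$ has \emph{all} of its coefficients vanishing on the closed simplex $\Delta^{n-1}$ (they vanish on the open interior, which is open in $\mathbb{A}^{n-1}$, hence on the closure by continuity), so such a modification cannot change the normal components of $\tilde\Omega$ at points of the closed face $\Delta^{n-1}\cap\{y_{i_j-1}=0\}$. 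Concretely, take $n=3$ and $\mathcal{I}=\{0,2\}$ with $\Phi_2=0$ and $\Phi_0=j^*(dy_1)\in(\widetilde{A^1_{DR}})_2$: the compatibility condition $\partial_0\Phi_2=0=(d'_1)^*(dy_1)=\partial_1\Phi_0$ holds, your induction gives $\Psi=0$ and $\Omega=j^*(dy_1)$, but $j^*(dy_1)=j^*(y_1\eta)$ is impossible for any smooth $\eta$, since equality of the underlying sections on $\Delta^2$ at a point of the edge $\{y_1=0\}$ would force $dy_1=0$ there. Hence no $\tilde\Xi'$ of the proposed shape exists, while the extension problem itself is of course solvable.

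The paper's proof avoids division entirely. Processing the indices of $\mathcal{I}$ in increasing order, it extends $\Phi_r'-\partial_r\Psi_{r-1}'$ by pulling back along the radial projection $\varphi:\mathbb{A}^n\setminus\{v_r\}\to\mathbb{A}^{n-1}$ from the vertex opposite the $r$-th face and multiplying by a cut-off $\rho\circ k_r$ with $\rho(0)=1$ and $\rho(1)=0$. Because the cut-off equals $1$ on the face $\{x_r=0\}$ and $\varphi\circ\partial^r=\mathrm{id}$, the $r$-th face of the result is the prescribed form; and the vanishing on the faces $i<r$ is obtained not by exhibiting the result as a multiple of coordinates but from the naturality of the whole construction with respect to face maps: $\partial_i$ of the extension equals the analogous extension of $\partial_i(\Phi_r-\partial_r\Psi_{r-1})=0$. (The PL argument you cite is of the same kind --- one clears denominators by a power of $1-t_r$, which is $1$ on the face being matched --- rather than a division by the coordinates cutting out the other faces.) Your induction on $|\mathcal{I}|$ can be salvaged by replacing the coordinate-product ansatz with this cut-off-times-radial-projection construction and checking the required face identities via that naturality.
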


\begin{proof}
Let ${\mathcal I}$ be a subset of $\{0, 1, ..., n\}$ and $\Phi_i $ an element in $(\widetilde{A^*_{DR}})_{n-1}$ for $i \in {\mathcal I}$. We assume that 
$\partial_i\Phi_j = \partial_{j-1} \Phi_i$ for $i <j$.  We define inductively elements $\Psi_r \in (\widetilde{A^*_{DR}})_{n}$ for $-1 \leq r \leq n$ which satisfy the condition that (*): $\partial_i \Psi_r= \Phi_i$ if $i \in {\mathcal I}$ and $i\leq r$. Put $\Psi_{-1} =0$ and suppose that $\Psi_{r-1}$ is given with (*). 
Define a smooth map $\varphi : {\mathbb A}^n-\{v_r\} \to {\mathbb A}^{n-1}$ by 
\[
%$
\varphi(t_0, t_1, ..., t_n) = \big(\frac{t_0}{1-t_{r}}, .., \frac{t_{r-1}}{1-t_r}, \frac{t_{r+1}}{1-t_r}, ..., \frac{t_n}{1-t_r}\big), 
%$
\]
where $v_r$ denotes the $r$th vertex. The map $\varphi$ induces a morphism 
$\varphi^* :  \Omega^*({\mathbb A}^{n-1}) \to \Omega^*({\mathbb A}^n - \{v_r\})$  
of cochain algebras. For an element $u$ in  $(\widetilde{A^*_{DR}})_{n-1}$, we write $u'$ for an element in $\Omega^*({\mathbb A}^{n-1})$ with 
$j^*(u')=u$. If $r$ is not in ${\mathcal I}$, we define $\Psi_r$ by $\Psi_{r-1}$. In the case where $r \in {\mathcal I}$, we consider the element 
$\Phi_r' -\partial_r\Psi_{r-1}'$ in $\Omega^*({\mathbb A}^{n-1})$. 

Let $k_r : {\mathbb A}^n \to {\mathbb A}$ be the projection in the $r$th factor and $\rho$ a cut-off function with $\rho(0) =1$ and $\rho(1) = 0$. 
We observe that $(\rho\circ k_r)$ is in $\Omega^0({\mathbb A}^n)$.  
Then the action of $(\rho\circ k_r)$ on $\Omega^*({\mathbb A}^n - \{v_r\})$ defined by the pointwise multiplication 
gives rise to a linear map 
%\[
$
(\rho\circ k_r)\star \text{-} : \Omega^*({\mathbb A}^n - \{v_r\}) \to \Omega^*({\mathbb A}^n). 
$
%\]
We see that the map $(\rho\circ k_r)\star \text{-}$ fits in the commutative diagram  
\[
\xymatrix@C35pt@R20pt{
\Omega^*({\mathbb A}^{n-1}) \ar[r]^-{\varphi^*} \ar[d]_{\partial_i}& \Omega^*({\mathbb A}^n - \{v_r\}) \ar[r]^-{(\rho\circ k_r)\star } 
\ar[d]_{\partial_i} & \Omega^*({\mathbb A}^n) \ar[d]^{\partial_i} \\
   \Omega^*({\mathbb A}^{n-2}) \ar[r]_-{\varphi^*} & \Omega^*({\mathbb A}^{n-1} - \{v_{r-1}\}) \ar[r]_-{(\rho\circ k_{r-1})\star } 
  & \Omega^*({\mathbb A}^{n-1}) 
}
\]
for $i < r$. 
Define $\Psi \in (\widetilde{A^*_{DR}})_n$ by 
%\[
$
\Psi := j^*\big((\rho\circ k_r)\star \varphi^*(\Phi_r' -\partial_r\Psi_{r-1}') \big). 
$ 
%\]
Since $\partial_i(\Phi_r-\partial_r\Psi_{r-1}) = \partial_{r-1}(\Phi_i -\partial_i\Psi_{r-1})=0$ by assumption for $i <r$, 
it follows from the commutative diagram above that for $i <r$, 
\begin{align*}
\partial_i\Psi &= \partial_ij^*((\rho\circ k_r)\star \varphi^*(\Phi_r' -\partial_r\Psi_{r-1}'))  \\
                    &= j^*((\rho\circ k_{r-1})\star \partial_i\varphi^*(\Phi_r' -\partial_r\Psi_{r-1}'))) 
                    = (\rho\circ k_{r-1})\star j^*(\partial_i\varphi^*(\Phi_r' -\partial_r\Psi_{r-1}')))  \\
                    &= (\rho\circ k_{r-1})\star j^*\varphi^*\partial_i(\Phi_r' -\partial_r\Psi_{r-1}')) 
                    = (\rho\circ k_{r-1})\star \varphi^*\partial_i(\Phi_r -\partial_r\Psi_{r-1}))=0.  
\end{align*}
The third and fifth equalities follow from the commutativity of the diagram 
\[
\xymatrix@C35pt@R16pt{
\Omega^*({\mathbb A}^{n-2}) \ar[r]^-{\varphi^*} \ar[d]_{j^*}& \Omega^*({\mathbb A}^{n-1} - \{v_{r-1}\}) \ar[r]^-{(\rho\circ k_{r-1})\star } 
\ar[d]_{j^*} & \Omega^*({\mathbb A}^{n-1}) \ar[d]^{j^*} \\
   \text{Im} \ j^* \ar[r]_-{\varphi^*} \ar@<-0.3ex>@{^{(}->}[d] &   \text{Im} \ j^* \ar[r]_-{(\rho\circ k_{r-1})\star } \ar@<-0.3ex>@{^{(}->}[d]
  & (\widetilde{A^*_{DR}})_{n-1} \\
  \Omega^*(\Delta_\text{sub}^{n-2}) \ar[r]_-{\varphi^*}& \Omega^*(\Delta_\text{sub}^{n-1}- \{v_{r-1}\}) . 
}
\]

Since $\partial_r (\rho \circ k_r) =1$ and $\varphi \circ \partial^r =id_{{\mathbb A}^n}$, it follows that the diagram 
\[
\xymatrix@C35pt@R18pt{
\Omega^*({\mathbb A}^{n-1}) \ar[r]^-{\varphi^*} \ar[rd]_{id}& \Omega^*({\mathbb A}^{n} - \{v_{r}\}) \ar[r]^-{(\rho\circ k_{r})\star } 
\ar[d]_{\partial_{r}} & \Omega^*({\mathbb A}^{n}) \ar[d]^{\partial_r} \\
  & \Omega^*({\mathbb A}^{n-1}) \ar[r]_-{id} 
  & \Omega^*({\mathbb A}^{n-1})
}
\]
is commutative. Thus we have $\partial_r\Psi = \Phi_r -\partial_r\Psi_{r-1}$. 
It turns out that $\partial_j(\Psi + \Psi_{r-1})=\Phi_j$ for $j \in {\mathcal I}$ and $j\leq r$. 
This completes the proof. 
\end{proof}

%$\bullet$ Poincar\'e Lemma : 
We verify that the Poincar\'e lemma holds for $(\widetilde{A_{DR}})_n$. 

%Lemma 3.3
\begin{lem}\label{lem:PoincareLemma}
One has $H^*((\widetilde{A_{DR}})_n)= {\mathbb R}$ for any $n\geq 0$.  
\end{lem}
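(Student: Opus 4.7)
The plan is to construct a chain contraction on $(\widetilde{A^*_{DR}})_n$ modelled on the classical Poincar\'e homotopy, by descending the standard one from $\Omega^*({\mathbb A}^n)$ along the surjection $j^*$.

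Let $b = (\tfrac{1}{n+1}, \ldots, \tfrac{1}{n+1}) \in \Delta^n$ be the barycenter and $\rho : \R \to [0,1]$ a smooth cut-off function with $\rho(t) = 0$ for $t \leq 0$ and $\rho(t) = 1$ for $t \geq 1$. Define a smooth contraction $H : {\mathbb A}^n \times \R \to {\mathbb A}^n$ by $H(x,t) = (1-\rho(t))x + \rho(t)\,b$. Since $\Delta^n$ is convex and contains $b$, this restricts to a smooth contraction $\widetilde{H} : \Delta^n_{\text{sub}} \times \R \to \Delta^n_{\text{sub}}$ in $\mathsf{Diff}$, and $j \circ \widetilde{H} = H \circ (j \times \text{id})$. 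Via the tautological isomorphism $\theta$, the standard Poincar\'e operator
\[
h(\omega) \;=\; \int_0^1 \iota_{\partial / \partial t}\, H^*\omega \, dt
\]
on $\Omega^*({\mathbb A}^n)$ satisfies the classical identity $dh + hd = \text{id} - \epsilon$, where $\epsilon$ sends a form to its constant value at $b$.

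Next, I would descend $h$ by setting $h'(j^*\omega) := j^*(h\omega)$. For well-definedness, suppose $j^*\omega = 0$. For any plot $p : V \to \Delta^n_{\text{sub}}$, naturality of integration along the fibre of $V \times \R \to V$ gives
\[
(j \circ p)^*(h\omega) \;=\; \pi_{V\,*}\bigl((H \circ ((j \circ p) \times \text{id}))^* \omega\bigr),
\]
and the integrand equals $(j \circ \widetilde{H} \circ (p \times \text{id}))^*\omega = 0$ by assumption, since $\widetilde{H} \circ (p \times \text{id})$ is itself a plot of $\Delta^n_{\text{sub}}$. Hence $j^*(h\omega) = 0$, and $h'$ is well-defined. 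Applying $j^*$ to $dh + hd = \text{id} - \epsilon$ then yields $dh' + h'd = \text{id} - \epsilon'$ on $(\widetilde{A^*_{DR}})_n$, where $\epsilon'$ is the augmentation to constant functions (which lie in the image of $j^*$). This immediately gives $H^k((\widetilde{A^*_{DR}})_n) = 0$ for $k \geq 1$ and $H^0((\widetilde{A^*_{DR}})_n) = \R$.

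The only non-formal point is the well-definedness of $h'$, and that is where convexity of $\Delta^n$ enters essentially; everything else is formal from the classical Poincar\'e lemma combined with the tautological identification of Souriau forms on the manifold ${\mathbb A}^n$ with ordinary de Rham forms.
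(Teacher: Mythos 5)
Your proposal is correct and follows essentially the same route as the paper: both use a smooth contraction of ${\mathbb A}^n$ compatible with one of $\Delta^n_{\text{sub}}$ and the naturality of the fibrewise-integration homotopy operator to show that the classical Poincar\'e contraction descends to the image of $j^*$. Your explicit verification that $j^*\omega = 0$ implies $j^*(h\omega)=0$ spells out the step the paper compresses into the assertion that its operator $K'$ restricts to $(\widetilde{A_{DR}})_n$.
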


\begin{proof} Let $X$ and $Y$ be diffeological spaces. Let $X^{\mathbb R}:=\text{map}({\mathbb R}, X)$ denote the function space with functional diffeology. 
We first remark that the chain homotopy operator $K_X : \Omega (X) \to \Omega^{*-1}(X^{\mathbb R})$ defined in \cite[6.83]{IZ} is natural with respect to smooth maps. Moreover a smooth homotopy  $H : Y\times {\mathbb R} \to X$ from $H_0$ to $H_1$ gives a cochain homotopy defined by $\widetilde{K}_X:=ad(H)^*\circ K_X: \Omega(X) \to\Omega^{*-1}(Y)$ from $H_1^*$ to $H_1^*$, where $ad(H) : Y \to X^{\mathbb R}$ is the adjoint map to $H$. 

We choose a smooth contraction map $\Delta_\text{sub}^n \to \Delta_\text{sub}^n$ that extends to one on the affine space $\mathbb{A}^n$. 
Moreover, we have a smooth homotopy between the contraction and the identity map on $\mathbb{A}^n$ whose restriction is such a homotopy 
on $\Delta_\text{sub}^n$. 
The homotopies give rise to cochain homotopies $\widetilde{K}_{{\mathbb A}^n}$ and $\widetilde{K}_{\Delta_\text{sub}^n}$ which 
fit into a commutative diagram 
\[
\xymatrix@C25pt@R18pt{
\Omega^*(\mathbb{A}^n) \ar[r]^{j^*} \ar[d]_{\widetilde{K}_{{\mathbb A}^n}} & \Omega^*(\Delta^n_{\text{sub}}) \ar[d]^{\widetilde{K}_{\Delta_\text{sub}^n}} \\
\Omega^{*-1}(\mathbb{A}^n) \ar[r]^{j^*} & \Omega^{*-1}(\Delta^n_{\text{sub}}). 
}
\]
Thus the cochain homotopy $\widetilde{K}_{\Delta_\text{sub}^n}$ restricts to $(\widetilde{A_{DR}})_n$. As a consequence, we have the result. 
\end{proof}

Thanks to the extendability and the Poincar\'e lemma for the simplicial cochain algebra $(\widetilde{A_{DR}})_\bullet$, 
the same argument as in the proof of \cite[Theorem 10.9]{F-H-T}, which gives quasi-isomorphisms between $C^*(X ; {\mathbb Q})$ and the rational de Rham complex $A_{PL}(X)$ for a space $X$, works well in our setting. 
In fact, replacing the simplicial complex $(A_{PL})_\bullet$ of polynomial differential forms in the proof with $(\widetilde{A_{DR}})_\bullet$, 
we have

%Proposition 3.4
\begin{prop} \label{prop:3.4}
Let  $K$ be a simplicial set. Then there is a sequence of quasi-isomorphisms
\[
\xymatrix@C25pt@R25pt{
C^*(K) & C_{PL}^*(K) \ar[l]^{\nu}_{\cong} \ar[r]^-{\simeq}_-\varphi & (C_{PL} \otimes \widetilde{A_{DR}})^*(K) & 
\widetilde{A_{DR}^*}(K), \ar[l]_-{\simeq}^-\psi 
}
\]
where $\varphi$ and $\psi$ are defined by $\varphi(\gamma)=\gamma \otimes 1$ and $\psi(\omega)=1\otimes \omega$, respectively. 
\end{prop}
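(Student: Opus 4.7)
The plan is to follow the strategy of the proof of Theorem 10.9 in \cite{F-H-T}, adapted to our simplicial cochain algebra $(\widetilde{A_{DR}})_\bullet$. Since $\nu$ has already been identified as an isomorphism in \cite[Lemma 10.11]{F-H-T}, the task reduces to showing that $\varphi$ and $\psi$ are quasi-isomorphisms. The two key structural inputs are the extendability of $(\widetilde{A_{DR}})_\bullet$ established in Lemma \ref{lem:extendability} and the Poincar\'e lemma $H^*((\widetilde{A_{DR}})_n) = {\mathbb R}$ established in Lemma \ref{lem:PoincareLemma}. Both properties are also known for $(C_{PL})_\bullet$ by \cite[Lemma 10.7]{F-H-T}, so the argument is symmetric in the two factors.

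First I would treat the base case $K = \Delta[n]$. By the Yoneda lemma $\widetilde{A_{DR}^*}(\Delta[n]) = (\widetilde{A_{DR}^*})_n$ and $C_{PL}^*(\Delta[n]) = (C_{PL}^*)_n$, each of which has cohomology ${\mathbb R}$. For the tensor product $(C_{PL}\otimes \widetilde{A_{DR}})^*(\Delta[n])$, I would filter by degree in one of the two tensor factors and exploit the Poincar\'e lemma on the other to compute the associated spectral sequence, showing that both $\varphi$ and $\psi$ induce isomorphisms on cohomology over the simplex. This is the purely local content of the proposition.

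Next, I would perform the globalization by induction on the skeletal filtration of $K$. For each $n$, the inclusion $K^{(n-1)}\hookrightarrow K^{(n)}$ gives rise to a short exact sequence of cochain complexes whenever the ambient simplicial cochain algebra is extendable; in particular, extendability guarantees the surjectivity of the restriction map $\widetilde{A_{DR}^*}(K^{(n)}) \to \widetilde{A_{DR}^*}(K^{(n-1)})$, and analogously for $C_{PL}^*$ and for the tensor product. The kernel in each case splits as a product over the non-degenerate $n$-simplices of $K$ of complexes of the form handled in the base case, so $\varphi$ and $\psi$ induce isomorphisms on kernels. An application of the five lemma to the resulting long exact sequences, together with the inductive hypothesis, then propagates the quasi-isomorphism from $K^{(n-1)}$ to $K^{(n)}$. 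Passing to the colimit over the skeletal filtration (each of the three functors preserves this filtered colimit, since a cochain in a fixed degree is determined by its values on finitely many simplices) yields the result for arbitrary simplicial $K$.

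The main obstacle I anticipate is the surjectivity of the restriction maps on skeleta, which is where the extendability of Lemma \ref{lem:extendability} is essential: given a form on $K^{(n-1)}$ together with compatible boundary data, one needs to construct an extension to each newly attached $n$-simplex whose restrictions to the boundary faces are prescribed, and for the tensor product $(C_{PL}\otimes \widetilde{A_{DR}})^*$ one must verify that extendability passes to the levelwise tensor product (which follows since one can extend each tensor factor independently by the common combinatorial argument). Once this point is established, the five-lemma induction and the base-case computation combine without further difficulty.
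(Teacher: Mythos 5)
Your proposal is essentially the paper's own proof, which simply invokes the argument of \cite[Theorem 10.9]{F-H-T} with $(A_{PL})_\bullet$ replaced by $(\widetilde{A_{DR}})_\bullet$, the two required inputs being exactly the ones you identify: the extendability of Lemma \ref{lem:extendability} (and its stability under the levelwise tensor product with $(C_{PL})_\bullet$) and the Poincar\'e lemma of Lemma \ref{lem:PoincareLemma}. The only slip is in the last step: since $A(-)=\mathsf{Sets^{\Delta^{op}}}(-,A_\bullet)$ is contravariant, passing from the skeleta to $K$ is an \emph{inverse} limit rather than a colimit, and one concludes via the Mittag--Leffler/$\varprojlim^1$ argument, whose hypothesis (surjectivity of the restriction maps) is again supplied by extendability.
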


\begin{proof} 
We recalled the definition of the isomorphism $\nu$ before Theorem \ref{thm:main}. Since $(C_{PL})_\bullet$ and $(\widetilde{A_{DR}})_\bullet$ are extendable, it follows 
from the proof of \cite[Lemma 10.12 (iii)]{F-H-T} that so is $(C_{PL}\otimes \widetilde{A_{DR}})_\bullet$. Then the result \cite[Proposition 10.5]{F-H-T} yields that 
$\varphi$ and $\psi$ are quasi-isomorphisms. 
\end{proof}

Let $(\Omega_{\text{deRham}}^*)_\bullet$ be the simplicial cochain algebra 
defined by $(\Omega_{\text{deRham}}^*)({\mathbb A}^n)$ in degree $n$. The affine space ${\mathbb A}^n$ is a manifold diffeomorphic to ${\mathbb R}^n$ 
with the projection $\pi : {\mathbb A}^n \to {\mathbb R}^n$ defined by $\pi (x_0, x_1, ..., x_n) =(x_1, ..., x_n)$. Observe that the sub-diffeology of ${\mathbb A}^n$ 
described in Section \ref{sect2} coincides with the diffeology that comes from the structure of the manifold ${\mathbb A}^n$ mentioned above. 
We see that the extendability is satisfied and that the Poincar\'e lemma holds for $(A^*_{DR})_\bullet$. 
In fact, these results follow from the proofs of Lemmas \ref{lem:extendability} and \ref{lem:PoincareLemma}. 
Therefore, Proposition \ref{prop:3.4} is also valid after replacing $(\widetilde{A_{DR}^*})_\bullet$ with the simplicial cochain algebra $(A_{DR}^*)_\bullet$. 
Thus the result \cite[Proposition 10.5]{F-H-T} enables us to deduce the following corollary. 

%Corollary 3.5
\begin{cor}\label{cor:RHT}
For a simplicial set $K$, there is a sequence of quasi-isomorphisms 
\[
\xymatrix@C25pt@R20pt{
A_{PL}^*(K)\otimes_{\mathbb Q} {\mathbb R}  \ar[r]_-{\simeq}^-s & \Omega_{\text{\em deRham}}^*(K) \ar[r]^-{\theta}_-{\cong} &A_{DR}^*(K) 
\ar[r]^-{(j^*)_*}_-{\simeq} &  (\widetilde{A^*_{DR}})(K),
}
\]
where $s$ denotes the map induced by the inclusion $(A^*_{PL})_\bullet \to (\Omega_\text{\em deRham}^*)_\bullet$ and $\theta$ is the tautological map in Remark \ref{rem:tautological_map}.
\end{cor}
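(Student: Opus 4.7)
The plan is to verify the hypotheses --- extendability and the Poincar\'e lemma --- for each simplicial cochain algebra in the diagram, and then to apply \cite[Proposition 10.5]{F-H-T} to the two outer arrows, handling the middle arrow $\theta$ as a pointwise isomorphism.

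First I would confirm the extendability and the Poincar\'e lemma for $(\Omega_{\text{deRham}}^*)_\bullet$ and $(A_{DR}^*)_\bullet$, as flagged in the paragraph preceding the statement. The construction in the proof of Lemma \ref{lem:extendability} transports verbatim to $(\Omega_{\text{deRham}}^*)_\bullet$: the smooth projection $\varphi : {\mathbb A}^n - \{v_r\} \to {\mathbb A}^{n-1}$ together with the cut-off factor $\rho \circ k_r$ produces an extension
\[
\Psi = (\rho \circ k_r) \star \varphi^*(\Phi_r' - \partial_r \Psi_{r-1}') \in \Omega_{\text{deRham}}^*({\mathbb A}^n),
\]
where there is no need for the final pull-back $j^*$ since the output already lives on the ambient manifold. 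The Poincar\'e lemma for ${\mathbb A}^n \cong {\mathbb R}^n$ is classical. Via the tautological isomorphism $\theta : \Omega_{\text{deRham}}^*({\mathbb A}^n) \stackrel{\cong}{\to} \Omega^*({\mathbb A}^n)$ of \cite{H-V-C}, both properties transfer to $(A_{DR}^*)_\bullet$. For $(A_{PL}^*)_\bullet \otimes_{\mathbb Q} {\mathbb R}$ these facts are standard, and for $(\widetilde{A^*_{DR}})_\bullet$ they are Lemmas \ref{lem:extendability} and \ref{lem:PoincareLemma}.

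For the left-most arrow $l$, both $(A_{PL}^*)_\bullet \otimes_{\mathbb Q} {\mathbb R}$ and $(\Omega_{\text{deRham}}^*)_\bullet$ are extendable with $H^*$ concentrated in degree zero, and at each simplex level the inclusion is a quasi-isomorphism. Hence \cite[Proposition 10.5]{F-H-T} gives that $l_K$ is a quasi-isomorphism of cochain algebras for every simplicial set $K$. The middle arrow $\theta$ is induced by the pointwise isomorphism $\theta_n$, and since $\theta$ is natural in smooth maps it upgrades to an isomorphism of simplicial cochain algebras; applying $\mathsf{Sets}^{\Delta^{\text{op}}}(K, -)$ preserves this isomorphism. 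For the right-most arrow $(j^*)_*$, the map $j^* : (A_{DR}^*)_n \to (\widetilde{A^*_{DR}})_n$ is surjective by definition and a quasi-isomorphism at every $n$, because both sides satisfy the Poincar\'e lemma and $j^*$ sends the constant $1$ to the constant $1$; a second application of \cite[Proposition 10.5]{F-H-T} then yields the required quasi-isomorphism.

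The only delicate point is confirming that the extendability construction of Lemma \ref{lem:extendability} genuinely does not require the sub-diffeology on $\Delta^n_{\text{sub}}$ in an essential way --- the role of $j^*$ in that proof was only to land the extension inside the image of $j^*$, and dropping it leaves the construction of $\Psi$ intact as an element of $\Omega_{\text{deRham}}^*({\mathbb A}^n)$. Once this observation is carefully made, the three quasi-isomorphisms compose to produce the asserted chain.
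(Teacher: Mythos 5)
Your proposal is correct and follows essentially the same route as the paper: verify extendability and the Poincar\'e lemma for $(\Omega_{\text{deRham}}^*)_\bullet$ and $(A_{DR}^*)_\bullet$ by transporting the arguments of Lemmas \ref{lem:extendability} and \ref{lem:PoincareLemma}, treat $\theta$ as a levelwise (hence global) isomorphism, and apply \cite[Proposition 10.5]{F-H-T} to the two outer arrows. Your closing observation that the $j^*$ in the extendability construction was only needed to land in $\operatorname{Im} j^*$, so that dropping it handles the ambient case, is exactly the point the paper is implicitly relying on when it says the same arguments apply.
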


%Section 3.2
\subsection{The factor map and an integration map}\label{sect3.2}
In this subsection, for a map $\tau : [n] \to [m]$ in $\Delta$, we use the same notation $\tau$ for the affine maps ${\mathbb A}^n \to {\mathbb A}^m$ and 
$\Delta^n \to \Delta^m$ induced by the non-decreasing map. 
We recall the factor map 
$\alpha' : \Omega^*(X) \to  \mathsf{Sets^{\Delta^{op}}}(S^\infty_\bullet(X),  \widetilde{A_{DR}})=:\widetilde{A_{DR}}(S^\infty_\bullet(X))$
defined by 
$\alpha'(\omega)(\sigma) = \sigma^*(\omega)$ for $\sigma \in S^\infty_l(X)$. 
Let $j : \Delta_\text{sub}^l \to \mathbb{A}^l$ be the inclusion. 
By definition, we see that $\sigma = \widetilde{\sigma}\circ j$ for some smooth map $\widetilde{\sigma} : \mathbb{A}^l \to X$. 
Then it is readily seen that $\sigma^*(\omega)=j^*(\widetilde{\sigma}^*\omega)$ and hence $\alpha'$ is well defined. 
A standard calculation allows us to conclude that $\alpha'$ is a morphism of cochain alegbras. 
Moreover, it follows that $\alpha'$ is natural with respect to diffeological spaces. Indeed, for a morphism $Y \to X$ in $\mathsf{Diff}$, we have 
$((f_*)^*\alpha' (\omega))(\sigma_Y)= \alpha'(\omega)(f_*\sigma_Y) =\alpha'(\omega)(f\circ \sigma_Y) = (f\circ \sigma_Y)^*\omega =\sigma_Y^*(f^*\omega) =
\alpha'(f^*\omega)(\sigma_Y)$, where $\omega \in \Omega^*(X)$ and $\sigma_Y \in S^\infty_\bullet(Y)$. Thus,  the map $\alpha'$ gives rise to a natural transformation 
$\alpha' : \Omega^*(\text{-}) \to \widetilde{A_{DR}}(S^\infty_\bullet(\text{-}))$.  
We also define a natural transformation $\alpha : \Omega^*(\text{-}) \to  A_{DR}^*(S^D_\bullet(\text{-}))$ in the same way as that for $\alpha'$. Observe that the natural transformation gives the {\it factor map} $\alpha$ described in the Section 2.  

We define an integration map $\int_{\Delta^p} : (\widetilde{A_{DR}^p)}_p \to \mathbb{R}$ by $\int_{\Delta^p}\omega = \int_{\Delta^p}\eta$ choosing 
$\eta \in \Omega^p_\text{deRham}(\mathbb{A}^p)$ with $j^*\theta(\eta) = \omega$. The definition of the integration is independent on the choice of the element $\eta$. In fact, for $\eta$ and $\eta'$ with $j^*\theta (\eta) = \omega = j^*\theta(\eta')$, we see that 
$j^*\theta (\eta) (\tau)= j^*\theta(\eta') (\tau)$ for the inclusion $\tau : (\Delta^p)^\circ \to \Delta_\text{sub}^p$ from the interior of $\Delta^p$ which is a plot in $\mathcal{D}^{{\mathbb A}^p}$. 
This implies that $\eta\circ(j\circ \tau) = \eta'\circ(j\circ \tau)$. Since $\eta$ and $\eta'$ are smooth maps on $\mathbb{A}^p$, it follows that 
$\eta = \eta'$ on $\Delta^p$. Then  $\int_{\Delta^p}\eta =  \int_{\Delta^p}\eta'$. 

We define a map $\int : (\widetilde{A_{DR}^*)}_\bullet \to (C_{PL}^*)_\bullet = C^*(\Delta[\bullet])$ by 
\[
(\int \gamma)(\sigma)= \int_{\Delta^p}\sigma^*\gamma
\eqnlabel{add-3}
\] 
for $\gamma \in  (\widetilde{A_{DR}^p)}_n$, 
where $\sigma : \Delta^p \to \Delta^n$ is the affine map induced by $\sigma : [p] \to [n]$. Since the affine map $\sigma$ is extended to an affine map 
from $\mathbb{A}^p$ to  $\mathbb{A}^n$, it follows that  $\sigma^*\gamma$ is in $(\widetilde{A_{DR}^p)}_p$. 
Then, the map $\int$ is a cochain map. To see this, let $\sigma$ be an element in $\Delta[n]_p$ and 
$\gamma'$ a form in $(\widetilde{A^{p-1}_{DR}})_n$ with $\sigma^*(\gamma')=j^*\theta(\eta')$ for some $\eta'\in \Omega_{DR}^{p-1}(\mathbb{A}^p)$. 
We have 
\begin{align*}
(\int d\gamma')(\sigma) &= \int_{\Delta^p}\sigma^*(d\gamma') =  \int_{\Delta^p}d(\sigma^*\gamma') = \int_{\Delta^p}d(\eta') \\
              &= \int_{\partial\Delta^p}\eta' = \sum_i(-1)^i\int_{\Delta^{p-1}} d_i^*\eta' =  \sum_i(-1)^i\int_{\Delta^{p-1}} d_i^* \sigma^*\gamma' \\
              &=  \sum_i(-1)^i\int_{\Delta^{p-1}} (\sigma \circ d_i)^*\gamma' = (d(\int \gamma'))(\sigma). 
\end{align*}
The fourth and fifth equalities follows from Stokes' theorem for a manifold; see \cite[V. Sections 4 and 5]{B} for example. We show that the integration 
is a morphism of simplicial sets. Let $\sigma : [p] \to [m]$ and $\tau : [m] \to [n]$  be a map in $\Delta$.  For a $\gamma \in 
(\widetilde{A^{p}_{DR}})_n$, we take a differential form $\eta$ in $\Omega_{DR}^{p}(\mathbb{A}^p)$ with 
$(\tau\circ \sigma)^*\gamma = j^*\theta (\eta)$. Then it follows that 
\begin{align*}
\tau^*(\int \gamma) (\sigma) &= (\int \gamma) (\tau\circ \sigma) = \int_{\Delta^p}\eta = \int_{\Delta^p}\sigma^*(\tau^*\gamma) 
=(\int \tau^*\gamma)(\sigma). 
\end{align*}
The third equality follows from $\sigma^*(\tau^*\gamma) = j^*\theta (\eta)$. As a consequence, we see that $\int$ is a morphism of 
simplicial differential graded {\it modules}.

Let $1$ be the unit of  $\widetilde{A_{DR}^*}_\bullet$, which is in  
$\mathsf{Diff}({\mathbb A}^n,  {\mathbb R}) =\Omega_{DR}^0({\mathbb A}^n)=(\widetilde{A_{DR}^0})_n$. 
Then we see that $\int 1 =1$ in $(C_{PL}^0)_n$ for $n\geq 0$. This yields the commutative diagram 
\[
\xymatrix@C45pt@R20pt{
(C_{PL}^*)_\bullet \ar[rd]_{=} \ar[r]^-\varphi & (C_{PL} \otimes \widetilde{A_{DR}})_\bullet^* 
\ar[d]_(0.4){\text{mult } \! \circ (1\otimes \int)} & \widetilde{A_{DR}^*}_\bullet \ar[l]_-\psi 
\ar[ld]^{\int}\\
&(C_{PL}^*)_\bullet.  &  
}
\eqnlabel{add-4}
\]

The argument above in this subsection works well for the simplicial cochain algebra ${A_{DR}}^*_\bullet$. 
In consequence, in the diagram above, the commutativity remains valid even if $\widetilde{A_{DR}^*}_\bullet$ is replaced with the simplicial cochain algebra ${A_{DR}}^*_\bullet$; 
see \cite[Remark, page 130]{F-H-T} for the same triangles as above for the polynomial de Rham complex $A_{PL}^*$.  

%Section 4
\section{Proofs}\label{sect5}
\subsection{Proofs of Theorem \ref{thm:main} and corollaries \ref{cor:main} and \ref{cor:main2}}\label{sub4.1}
We may write $H_*^D(X)$ for the homology of ${\mathbb Z}{S^D_\bullet(X)}_{\text{sub}}$ which is the chain complex with coefficients in ${\mathbb Z}$ 
induced by the simplicial set ${S^D_\bullet(X)}_{\text{sub}}$. 

We next discuss the homotopy axiom for the homology $H_*^D(X)$.   
Let $f$ and $g$ be  smooth maps from $X$ to $Y$ which are homotopic smoothly in the sense of Iglesias-Zemmour \cite{IZ}. 
Then the homomorphisms $f_*$ and $g_*$ induced on the homology coincide with each other: $f_*=g_* :  H_*(X) \to H_*(Y)$. 
The construction of the chain homotopy is almost verbatim a repetition of the usual one for singular chain. Observe that the proof uses the fact that 
$\Delta_\text{sub}^n \times {\mathbb R} \cong (\Delta^n \times {\mathbb R})_\text{sub}$ as a diffeological space and the following lemma; 
see, for example, \cite[1.10 Theorem]{V}.  

%Lemma 4.1
\begin{lem} \label{lem:acyclic} If $X$ is a convex subset of ${\mathbb R^k}$ with sub-diffeology, then 
the $n$th homology $H_n(S^D_\bullet(X))$ is trivial for $n >0$ .The same assertion is valid for the functors 
${\mathbb Z}{S^D_\bullet( \text{-})}_{\text{\em sub}}$ and ${\mathbb Z}{S^\infty_\bullet(\text{-})}$. 
\end{lem}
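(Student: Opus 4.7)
The approach is a smooth analogue of the classical cone construction for convex sets. Fix a basepoint $x_0\in X$; convexity makes the straight-line homotopy to $x_0$ land in $X$, but the naive cone formula on $\mathbb{A}^{n+1}$ is not smooth at the apex vertex because a smooth simplex $\sigma:\mathbb{A}^n\to X$ need not be bounded on the non-compact affine space. A smooth cut-off resolves this.

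The construction: choose $\rho:\mathbb{R}\to[0,1]$ smooth with $\rho\equiv 0$ on $(-\infty,1/2]$ and $\rho\equiv 1$ on a neighbourhood of $1$. For a smooth simplex $\sigma:\mathbb{A}^n\to X$, set
\[
C\sigma(s_0,\ldots,s_{n+1})=\rho(s_0)\,x_0+(1-\rho(s_0))\,\sigma\!\left(\tfrac{s_1}{1-s_0},\ldots,\tfrac{s_{n+1}}{1-s_0}\right)
\]
for $s_0<1$, and $C\sigma\equiv x_0$ on the neighbourhood of $\{s_0=1\}$ where $\rho\equiv 1$. Smoothness is immediate away from $s_0=1$ and holds near the apex because the second term vanishes identically there; convexity of $X$ puts the image in $X$. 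A direct computation of the face operators via barycentric coordinates yields $\partial_0 C\sigma=\sigma$ (since $\rho(0)=0$ and the rescaling factor $1/(1-s_0)$ collapses to $1$) and $\partial_j C\sigma=C(\partial_{j-1}\sigma)$ for $j\geq 1$ (since the cut-off $\rho(s_0)$ is untouched by setting $s_j=0$). Summing with alternating signs gives the identity $\partial C+C\partial=\mathrm{id}$ in positive degrees, so every cycle in degree $n\geq 1$ is a boundary, whence $H_n(\mathbb{Z} S^D_\bullet(X))=0$ for $n>0$.

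For the other two functors the same cone works: applied to $\sigma:\Delta^n_{\mathrm{sub}}\to X$ the formula defines a smooth simplex $C\sigma:\Delta^{n+1}_{\mathrm{sub}}\to X$, handling $\mathbb{Z}\{S^D_\bullet(X)_{\mathrm{sub}}\}$; and for $\mathbb{Z} S^\infty_\bullet(X)$ one applies the construction to a smooth extension $\tilde\sigma:\mathbb{A}^n\to X$ of $\sigma$ and then restricts $C\tilde\sigma$ to $\Delta^{n+1}_{\mathrm{sub}}$, so the subclass is preserved. The sole obstacle is the smoothness at the apex vertex---the unmodified classical cone has an irremovable singularity there---and the cut-off trick addresses it while leaving the boundary combinatorics intact because $\rho$ depends only on the $s_0$ coordinate.
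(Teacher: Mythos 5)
Your proof is correct and follows essentially the same route as the paper: the paper's proof also defines a cone $K_v(\sigma)(t_0,\ldots,t_{n+1})=\rho(1-t_0)\,\sigma\bigl(\tfrac{t_1}{1-t_0},\ldots,\tfrac{t_{n+1}}{1-t_0}\bigr)+\tau(1-t_0)v$ with a cut-off $\rho$ killing the singularity at the apex, obtains the chain contraction from the standard boundary identities, and treats $S^\infty_\bullet$ via a smooth extension $\widetilde{\sigma}:\mathbb{A}^n\to X$ exactly as you do.
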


\begin{proof} For a smooth simplex $\sigma$ in  ${S^D_n(X)}$ and a point $v \in X$, we define a {\it cone} 
$K_v(\sigma)$ by 
\[
K_v(\sigma)(t_0 ,...., t_{n+1}) =  
\begin{cases}
    \rho(1- t_{0})\sigma(\frac{t_1}{1-t_{0}}, ..., \frac{t_n+1}{1-t_{0}}) + \tau(1-t_{0})v  & \text{for} \ t_{0} \neq 1 \\
    v  & \text{for} \ t_{0} =1, 
  \end{cases}
\]
where $\rho$ is a cut-off function with $\rho(0)= 0$, $\rho(1)=1$ and $\tau$ is the smooth function defined by $\tau= 1- \rho$. We see that 
$K_v(\sigma)$ is in $S^D_{n+1}(X)$. By extending $K_v$  linearly, we have a homomorphism 
$K_v :  {\mathbb Z}{S^D_n(X)} \to {\mathbb Z}{S^D_{n+1}(X)}$. 
This gives a homotopy between the identity and the zero map; 
see the proof of \cite[1.8 Lemma]{V}. The same argument as above works in ${S^D_n(X)}_{\text{sub}}$. 

As for the functor ${S^\infty_\bullet(\text{-})}$, we can define a cone 
$K_v  : {\mathbb Z}{S^\infty_n(X)} \to {\mathbb Z}{S^\infty_{n+1}(X)}$ with  an extension 
$\widetilde{\sigma} : {\mathbb A}^n \to X$ for $\sigma : \Delta_{\text{sub}}^n \to X$. This gives a homotopy between the identity and the zero map. 
\end{proof}

To apply the method of acyclic models in proving Theorem \ref{thm:main}, we need the following result. 

%Lemma 4.2
\begin{lem} \label{lem:representable}
Let ${\mathcal M}$ be the set of convex subsets of ${\mathbb R}^k$ for $k \geq 0$. Then the three functors 
${\mathbb Z}{S^D_n( \text{-})}$, ${\mathbb Z}{S^D_n( \text{-})}_{\text{\em sub}}$ and ${\mathbb Z}{S^\infty_n(\text{-})}$ are representable for ${\mathcal M}$ 
in the sense of Eilenberg--Mac Lane for each $n$; see \cite[Definition, page 189]{E-M}. 
\end{lem}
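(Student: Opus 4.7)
The plan is to verify, for each functor $F_n$, the Eilenberg--Mac Lane representability condition by selecting a convex model $M_n \in \mathcal{M}$ together with a distinguished basis for $F_n(M_n)$ that generates the functor freely under the natural action of $\mathsf{Diff}(M_n,-)$. Once such data is chosen, compatibility with the simplicial face and degeneracy operators is essentially automatic, because those operators are induced by affine (and in particular smooth) maps between the standard convex models.

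For $\mathbb{Z}S^D_n(-)$ the choice is tautological: since $\mathbb{A}^n = \{\sum_{i=0}^n x_i = 1\} \subset \mathbb{R}^{n+1}$ is convex, and (as noted in Section~3) its sub-diffeology agrees with its standard manifold structure, we have $\mathbb{A}^n \in \mathcal{M}$; taking the singleton basis $\{\mathrm{id}_{\mathbb{A}^n}\} \subset S^D_n(\mathbb{A}^n)$ yields the bijection $\phi \mapsto \phi$ between $\mathsf{Diff}(\mathbb{A}^n, X)$ and $S^D_n(X)$. The argument for $\mathbb{Z}S^D_n(-)_{\mathrm{sub}}$ is identical with model $\Delta^n_{\mathrm{sub}}$ (whose underlying set is the convex simplex $\Delta^n \subset \mathbb{R}^{n+1}$) and basis $\{\mathrm{id}_{\Delta^n_{\mathrm{sub}}}\}$, using the equality $S^D_n(X)_{\mathrm{sub}} = \mathsf{Diff}(\Delta^n_{\mathrm{sub}}, X)$ from the definitions of Section~2.

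I expect the functor $\mathbb{Z}S^\infty_n(-)$ to be the main obstacle, because $S^\infty_n(X) \subset S^D_n(X)_{\mathrm{sub}}$ is defined as the \emph{image} of the restriction $S^D_n(X) \to S^D_n(X)_{\mathrm{sub}}$, and so its elements are not canonically parametrised by smooth maps out of a single convex model: different extensions $\widetilde{\sigma}, \widetilde{\sigma}' : \mathbb{A}^n \to X$ of the same $\sigma$ yield the same element of $S^\infty_n(X)$. My plan is to again take $M_n = \mathbb{A}^n$ and to use as a universal element the inclusion $j : \Delta^n_{\mathrm{sub}} \hookrightarrow \mathbb{A}^n$, which lies in $S^\infty_n(\mathbb{A}^n)$ via the extension $\mathrm{id}_{\mathbb{A}^n}$. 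Then the natural map $\mathsf{Diff}(\mathbb{A}^n,X) \to S^\infty_n(X)$, $\widetilde{\sigma} \mapsto \widetilde{\sigma}\circ j$, is surjective by the very definition of $S^\infty_n(X)$. To upgrade this surjection to the free-basis presentation demanded by the Eilenberg--Mac Lane formalism of Appendix~A, the technical step is to choose a distinguished subset $B_{\mathbb{A}^n} \subset S^\infty_n(\mathbb{A}^n)$, containing $j$ and indexing the fibres of the non-injective restriction map, so that every element of $S^\infty_n(X)$ is \emph{uniquely} of the form $\phi \circ b$ with $b \in B_{\mathbb{A}^n}$ and $\phi \in \mathsf{Diff}(\mathbb{A}^n, X)$. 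Once the basis is specified this way, the required simplicial compatibility is automatic since the face and degeneracy maps are induced by affine maps $\mathbb{A}^n \to \mathbb{A}^m$ that preserve the distinguished inclusions.
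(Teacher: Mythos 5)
Your treatment of $\mathbb{Z}S^D_n(\text{-})$ and $\mathbb{Z}S^D_n(\text{-})_{\text{sub}}$ is correct and coincides with the paper's: the identities $id_{\mathbb{A}^n}$ and $id_{\Delta^n_{\text{sub}}}$ are universal elements, both models are convex, and these two functors are genuinely free on the models, hence representable. The gap is in your plan for $\mathbb{Z}S^\infty_n(\text{-})$. Representability in the Eilenberg--Mac Lane sense (recalled in dual form as ``corepresentative'' in Appendix A) does not demand a free-basis presentation with \emph{unique} factorization; it only demands that the canonical natural map
$\Phi\colon \mathbb{Z}\bigl[\coprod_{M\in\mathcal{M}}S^\infty_n(M)\times \mathsf{Diff}(M,X)\bigr]\to \mathbb{Z}S^\infty_n(X)$, $(m,\phi)\mapsto \phi_*m = m\circ\phi$, admit a section $\Psi$ with $\Phi\circ\Psi=id$, i.e.\ that $\mathbb{Z}S^\infty_n(\text{-})$ be a \emph{retract} of the free functor, not isomorphic to one. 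Your proposed ``technical step'' --- a subset $B_{\mathbb{A}^n}\subset S^\infty_n(\mathbb{A}^n)$ containing $j$ such that every $\sigma\in S^\infty_n(X)$ is \emph{uniquely} of the form $\phi\circ b$ --- is both unnecessary and unachievable: already for $X=\mathbb{A}^n$ the element $j$ factors as $\phi\circ j$ for \emph{every} smooth $\phi\colon\mathbb{A}^n\to\mathbb{A}^n$ whose restriction to $\Delta^n$ is the inclusion, and there are many such $\phi$ besides the identity, so no $B_{\mathbb{A}^n}$ containing $j$ can give uniqueness; and since every $b\in S^\infty_n(\mathbb{A}^n)$ is itself a restriction $\widetilde{b}\circ j$, omitting $j$ does not help. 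This is exactly the point of the weaker notion: $S^\infty_n(\text{-})$ is not free on the models, only a retract of a free functor.

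The paper's proof finishes in one line precisely because uniqueness is not required: for each generator $m\in S^\infty_n(X)$ one simply \emph{chooses} an extension $\widetilde{m}\colon\mathbb{A}^n\to X$ (which exists by the definition of $S^\infty_n$) and sets $\Psi(m)=(\iota,\widetilde{m})$ with $\iota=j\in S^\infty_n(\mathbb{A}^n)$; then $\Phi\Psi(m)=\widetilde{m}\circ\iota=m$, and one extends linearly. So your identification of $j$ as the universal element and of the surjectivity of $\widetilde{\sigma}\mapsto\widetilde{\sigma}\circ j$ is exactly the right input; you only need to drop the demand for a basis and use the retract form of the definition. (Two minor points: the lemma is stated for each fixed $n$, so the simplicial compatibility you invoke is not part of what must be verified here; and the paper uses the model $\mathbb{A}^n$ for $\mathbb{Z}S^\infty_n(\text{-})$ with universal element $\iota$, just as you propose.)
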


\begin{proof} Let $\widetilde{{\mathbb Z}{S^\infty_n}}(X)$ be the free abelian group generated by 
$\amalg_{M \in {\mathcal M}}({\mathbb Z}S^\infty_n(M) \times 
\text{Hom}_{\mathsf{Diff}}(M, X))$.  We define a map 
%\[
$
\Phi : \widetilde{{\mathbb Z}{S^\infty_n}}(X) \to {\mathbb Z}S^\infty_n(X) 
$
%\]
by $\Phi (m, \phi) = \phi_*m = m\circ \phi$. For $m \in {\mathbb Z}S^\infty_n(X)$, one has an extension 
$\widetilde{m} : {\mathbb A}^n \to X$ by definition. Define a map $\Psi :  {\mathbb Z}S^\infty_n(X)  \to \widetilde{{\mathbb Z}{S^\infty_n}}(X)$ by $\Psi(m)= (\iota, \widetilde{m})$, where $\iota : \Delta_{\text{sub}}^n \to {\mathbb A}^n$ is the inclusion. 
It is readily seen that $\Phi\circ \Psi = id$. Therefore the functor ${\mathbb Z}{S^\infty_n(\text{-})}$ is representable for 
${\mathcal M}$. Observe that the inclusion $\iota$ is in ${\mathbb Z}S^\infty_n({\mathbb A}^n)$. 

Since the identity maps $id_{{\mathbb A}^n}$ and  $id_{\Delta_{\text{sub}}^n}$ belong to 
${\mathbb Z}{S^D_n({\mathbb A}^n)}$ and ${\mathbb Z}{S^D_n(\Delta_{\text{sub}}^n)}_{\text{sub}}$, respectively, 
it follows from the same argument as above that the functors ${\mathbb Z}{S^D_n( \text{-})}$ and 
${\mathbb Z}{S^D_n( \text{-})}_{\text{sub}}$ are representable for ${\mathcal M}$. 
\end{proof}

We consider the excision axiom for the homology of 
${S^D_\bullet(X)}_{\text{sub}}= \mathsf{Diff}(\Delta_\text{sub}^n, X)$.  Let 
$D: \mathsf{Diff} \to \mathsf{Top}$ and $C:  \mathsf{Top} \to  \mathsf{Diff}$ be the functors mentioned in Appendix B, which give an adjoint pair.  
Kihara's proof of \cite[Proposition 3.1]{Kihara} enables us to regard the chain complex 
${\mathbb Z}{S^D_\bullet(X)}_{\text{sub}}$ as a subcomplex of the singular chain complex $C_*(DX)$, 
where $D: \mathsf{Diff} \to \mathsf{Top}$ denotes the functor mentioned in Appendix B.  In fact, \cite[Lemma 3.16]{C-S-W} implies that 
$D(\Delta_\text{sub}^n)$ is the simplex $\Delta^n$ with the standard topology. 
We observe that for the diffeology ${\mathbb R}^n$ with smooth plots, $D({\mathbb R}^n)$ is Euclidian space.  
Thus for a diffeological space $X$, the unit $id : X \to CDX$ yields the sequence of inclusions  
\[
\mathsf{Diff}(\Delta_\text{sub}^n, X) \to \mathsf{Diff}(\Delta_\text{sub}^n, CDX)  \cong 
\mathsf{Top}(D(\Delta_\text{sub}), DX) = \mathsf{Top}(\Delta^n, DX); 
\eqnlabel{add-5}
\]
see Remark \ref{rem:CDC} for a discussion of when the composite of the maps in (4.1) is bijective.  
%Observe that ${S^D_n(X)}_\text{sub} = \mathsf{Diff}(\Delta_\text{sub}^n, X)$.  
Then we can prove the excision axiom by applying a barycentric subdivision argument. 
Indeed, the subdivision map 
$Sd : {S^D_n(X)}_\text{sub} \to {S^D_n(X)}_\text{sub}$ is defined by restricting the usual one for the singular chain complex, which is chain homotopic to the identity.  
%This yields that  the inclusion ${S^\infty_{*}}_{\mathcal{U}}(X) \to S^\infty_{*}(X)$ is a chain homotopy equivalence.  
It turns out that the relative homology $H_*^D(X, A)$ satisfies the excision axiom for the $D$-topology; that is, 
the inclusion $i : (X-U, A-U) \to (X, U)$ induces an isomorphism on the relative homology if the closure of $U$ is contained in the interior of $A$ with respect to 
the $D$-topology of $X$;  see \cite[IV, Section 17]{B} for details. 
Thus we have the Mayer--Vietoris exact sequences for the homology and cohomology of $S^D_\bullet(X)_{\text{sub}}$. 

In order to prove Theorem \ref{thm:main}, more observations concerning the cochain complexes in the theorem 
are now given. 

\medskip
\noindent
I) The method of acyclic models \cite[Section 8]{E-M} implies that there exists a chain homotopy equivalence 
$\widetilde{l} : {\mathbb Z}S^D_\bullet(X)_{\text{sub}} \stackrel{\simeq}{\longrightarrow} {C_\text{cube}}_*(X)$. %\maprightu{\simeq}{C_\text{cube}}_*(X)
The dual to $\widetilde{l}$ yields  
a cochain homotopy equivalence $l :C_\text{cube}^*(X) \stackrel{\simeq}{\longrightarrow} C^*(S^D_\bullet(X)_{\text{sub}})$, 
which induces a morphism of algebras on cohomology; see also \cite[Theorem 8.2]{SNPA} for details. 

\medskip
\noindent
II) The restriction map $j^* : {\mathbb Z}S^D_\bullet(X) \to {\mathbb Z}S^D_\bullet(X)_{\text{sub}}$ has a homotopy inverse $h$
in the category of chain complexes. This follows from the method of acyclic models \cite[Theorems Ia and Ib]{E-M} with 
Lemmas \ref{lem:acyclic} and \ref{lem:representable}. 
Then the map 
$h^* : C^*(S^D_\bullet(X)) \to C^*(S^D_\bullet(X)_{\text{sub}})$ induces an isomorphism of {\it algebras} on the homology. 
In fact, the inverse induced by 
$(j^*)^* : C^*(S^D_\bullet(X)_{\text{sub}}) \to C^*(S^D_\bullet(X))$ is a morphism of algebras. 

\begin{rem}\label{rem:M-V}
As seen in above, the homology $H^*(S^D_\bullet(X)_{\text{sub}})$ admits the Mayer--Vietoris exact sequence. Then, it follows from II) that so does $H^*(S^D_\bullet(X))$. 
\end{rem}

%\medskip
\noindent
III) The homotopy commutativity of the square in Theorem \ref{thm:main} also follows from the method of acyclic models for cochain complexes; 
see Appendix A for more details. 

\begin{proof}[Proof of the first assertion in Theorem \ref{thm:main}]
Proposition \ref{prop:3.4}, the considerations in  I), II), III) and the commutative diagram (3.2) allow us to deduce the first part. 
\end{proof}

\begin{proof}[Proof of Corollary \ref{cor:main}]
By Theorem \ref{thm:main}, we see that $\text{mult}\circ (1\otimes \int)$ is a quasi-isomorphism. 
The commutativity of the right triangle implies that the integration map is also a quasi-isomorphism.  
\end{proof}

\begin{proof}[Proof of Corollary \ref{cor:main2}] The argument in the beginning of Subsection \ref{sub4.1} gives (i). 
The assertion (ii) follows from II) above and the fact that the integration map $\int$ induces a morphism of algebras. 
The first assertion in Theorem \ref{thm:main} yields (iii). 
\end{proof}

% Proof of the latter half of Theorem 2.4 
\begin{proof}[Proof of the latter half of Theorem \ref{thm:main}]
We first observe that 
the Poincar\'e lemma for the cohomology of the de Rham complex in the sense of Souriau and the homotopy axiom hold 
for diffeological spaces; see \cite[Chapter 6]{IZ}.  
By Corollary \ref{cor:main}, 
it suffices to show that the composite $v:=\int \circ \ \alpha$ induces an isomorphism on the cohomology. 

For proving the result in case of a smooth CW-complex $\{K, \{K^{(n)}\}_{n\geq -1}\}$, 
we can use a Mayer--Vietoris exact sequence argument. 
To see this, we first observe that $K=\text{colim} \ K^{(n)}$ and the set $\{K^{(n)}\}_{n\geq -1}$ is finite by assumption. 
Under the same notation as in Appendix B, we have a partition of unity subordinate to the $D$-open subsets $U:= K^{(n)}- K^{(n-1)}$ and 
$V:= K^{(n)}-(\cup_{j\in J_n}\{0_j\})$, where $0_j$ denotes the origin of the ball $B_j$ attached to $K^{(n-1)}$; see \cite[Appendix]{I-I}. 
This yields the Mayer--Vietoris exact sequence of the de Rham cohomology functor $H^*(\Omega^*( \ ))$ with respect to $\{U, V\}$; see \cite[Theorem 4.3]{Haraguchi}. 
Moreover, we have seen that the cohomology functor $H^*(S^D_\bullet( \ )_{\text{sub}})$ admits the Mayer--Vietoris exact sequence for a $D$-open cover. 
The composite $v$ is natural with respect to smooth maps. Then the comparison of two exact sequences and an induction argument on the dimension 
of the skeleton $K^{(n)}$ yields the result for a smooth CW complex. 

Suppose that $(X, \D^X)$ is a manifold. Then the argument in \cite[V. \S9]{B} is applicable in order to deduce 
the map $v$ is a quasi-isomorphism. By definition, a parametrized stratifold $(S, \C)$ is constructed from manifolds 
with boundaries via an attaching procedure; see Appendix B. In general, a stratifold admits a partition of unity; see \cite[Proposition 2.3]{Kreck}. Moreover, we see that an open set of the underlying topological space $S$ is a $D$-open set of the diffeology $k(S, \C)$; see Lemma \ref{lem:D-top}. 
Thus the induction argument with the Mayer--Vietoris sequence works well 
to show that $H(v)$ is an isomorphism. In fact, let $S'$ be the parametrized stratifold mentioned in Appendix B, which is obtained from a stratifold $S$ and a manifold $W$ with collared boundary $\partial W$ by using an attaching map $f : \partial W \to S$. In the inductive step, we can use open sets of $S' = S\cup_f W$. 
$U := S\cup_f (\partial W\times [0, \varepsilon))$ and $V:= W- (\partial W \times [0, \varepsilon/2])$. Observe that $U$ is smoothly homotopy equivalent to $S$ and $V$ is a manifold without boundary. 
Hence we have the result for a parametrized stratifold.
\end{proof}

%Section 4.2
\subsection{An applications of the integration map in Theorem \ref{thm:main}}\label{sect6}

In this short section,  we describe an application of the integration map on $A^*_{DR}(S^D_\bullet(X))$ mentioned 
in Theorem \ref{thm:main}. 
Let $j^* : S_n^D(X) \to S_n^\infty(X)$ and $j^* : (A^*_{DR})_n \to (\widetilde{A^*_{DR}})_n$ be the restriction maps induced by 
the inclusion $j : \Delta_{\text{sub}}^n \to {\mathbb A}^{n}$. 
The naturality of the integration map $\int$ in the theorem implies that the map $\alpha'$ described in Section \ref{sect3.2} 
is an extension of $\alpha$ on the cohomology. 

%Proposition 4.4
\begin{prop}\label{prop:alpha_beta}%\text{\em (i)}
One has the diagram 
\[
\xymatrix@C40pt@R14pt{
                               & \mathsf{Sets^{\Delta^{op}}}(S_\bullet^\infty(X), (\widetilde{A^*_{DR}})_\bullet) & \hspace{-1.5cm}= \widetilde{A^*_{DR}}(S^\infty_\bullet(X)) \\
\Omega^* (X) \ar[ur]^{\alpha'} \ar[dr]_{\alpha}&  \mathsf{Sets^{\Delta^{op}}}(S_\bullet^\infty(X), (A^*_{DR})_\bullet) \ar[u]_{(j^*)_*}^{} \ar[d]^{(j^*)^*} & 
   \hspace{-1.5cm}= A^*_{DR}(S^\infty_\bullet(X))\\
          & \mathsf{Sets^{\Delta^{op}}}(S_\bullet^D(X), (A^*_{DR})_\bullet) & \hspace{-1.5cm}= A^*_{DR}(S^D_\bullet(X))
}
\]
in which $(j^*)_*$ and $(j^*)^*$ are quasi-isomorphisms. Moreover, the diagram is commutative on cohomology. 

%\noindent
%\text{\em (ii)} The maps $\alpha$ and $\alpha'$ are natural with respect to diffeological spaces; that is, 
%they give rise to natural transformations. 
\end{prop}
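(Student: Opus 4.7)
The plan is to verify in order that $(j^*)_*$ is a quasi-isomorphism, that $(j^*)^*$ is a quasi-isomorphism, and then that the diagram commutes on cohomology.

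For $(j^*)_*$, I would appeal directly to Corollary \ref{cor:RHT} applied with $K = S_\bullet^\infty(X)$: the corollary exhibits the map $A^*_{DR}(K) \to \widetilde{A^*_{DR}}(K)$ induced by $j^* : (A^*_{DR})_\bullet \to (\widetilde{A^*_{DR}})_\bullet$ as a quasi-isomorphism for every simplicial set $K$, and this map is by definition $(j^*)_*$.

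For $(j^*)^*$, observe that $j^* : S_\bullet^D(X) \to S_\bullet^\infty(X)$, $\tau \mapsto \tau \circ j$, is a surjective simplicial map, since $j$ commutes with the affine face and degeneracy maps. I would then show this simplicial map is a chain homotopy equivalence on normalized ${\mathbb Z}$-chains via the Eilenberg-Mac Lane acyclic models theorem exactly as used in Section \ref{sub4.1}: by Lemmas \ref{lem:acyclic} and \ref{lem:representable}, both ${\mathbb Z} S_\bullet^D(-)$ and ${\mathbb Z} S_\bullet^\infty(-)$ are representable and acyclic on the class ${\mathcal M}$ of convex subsets of Euclidean space, and $j^*$ reduces to the identity on $S_0^D(X) = S_0^\infty(X) = X$. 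Then, by the argument of Corollary \ref{cor:RHT} applied to $(A^*_{DR})_\bullet$ in place of $(\widetilde{A^*_{DR}})_\bullet$ --- using its extendability and Poincar\'e lemma, both noted just after the corollary --- the functor $A^*_{DR}(-)$ carries this chain homotopy equivalence of simplicial sets to the required quasi-isomorphism $(j^*)^*$.

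For the cohomological commutativity, the strategy is to construct a cochain map $\beta: \Omega^*(X) \to A^*_{DR}(S^\infty_\bullet(X))$ for which $(j^*)_*\beta = \alpha'$ holds strictly and $(j^*)_D \circ (j^*)^*\beta = (j^*)_D\circ \alpha$ holds strictly, where $(j^*)_D : A^*_{DR}(S^D_\bullet(X)) \to \widetilde{A^*_{DR}}(S^D_\bullet(X))$ is the quasi-isomorphism from Corollary \ref{cor:RHT}. Taking any smooth extension $\widetilde{\sigma} : {\mathbb A}^n \to X$ of $\sigma \in S^\infty_n(X)$ and setting $\beta(\omega)(\sigma) := \widetilde{\sigma}^*\omega$ satisfies both identities, because each value restricts on $\Delta^n_{\text{sub}}$ to the unambiguous pullback $\sigma^*\omega$, respectively $(\tau\circ j)^*\omega$, regardless of the chosen extension. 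Since $(j^*)_D$ is a quasi-isomorphism, it follows on cohomology that $H^*((j^*)_*\beta) = H^*(\alpha')$ and $H^*((j^*)^*\beta) = H^*(\alpha)$, which is precisely the claimed commutativity. The main obstacle is that a simplicially coherent choice of the extensions $\widetilde{\sigma}$ need not exist on the nose, so that $\beta$ is a priori only a set-theoretic assignment. I would upgrade it to a genuine simplicial morphism by an inductive construction on the skeletal filtration of $S^\infty_\bullet(X)$, solving the per-step face-data extension problem via the extendability of $(A^*_{DR})_\bullet$ (the analogue of Lemma \ref{lem:extendability}) and correcting any residual discrepancy in the restriction condition by an exact form in the kernel of $j^*$ supplied by the Poincar\'e lemma on ${\mathbb A}^n$.
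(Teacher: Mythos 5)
Your treatment of the two quasi-isomorphism claims is essentially sound and close to the paper's: invoking Corollary \ref{cor:RHT} with $K=S^\infty_\bullet(X)$ for $(j^*)_*$ is a legitimate shortcut, and your argument for $(j^*)^*$ (acyclic models on ${\mathbb Z}S^D_\bullet(\text{-})$ and ${\mathbb Z}S^\infty_\bullet(\text{-})$, then transport along the natural comparison of $A_{DR}^*(\text{-})$ with $C^*(\text{-})$) is exactly the paper's item II) of Section \ref{sub4.1} combined with Proposition \ref{prop:3.4}. The divergence, and the problem, is in the commutativity step. The paper deliberately does \emph{not} construct a lift $\beta$ of $\alpha'$ through $(j^*)_*$ --- indeed it remarks just before the proposition that a natural map $\Omega^*(X)\to A^*_{DR}(S^\infty_\bullet(X))$ compatible with $\alpha$ cannot be defined --- and instead postcomposes the whole triangle with the integration maps $\int_*$ into $C_{PL}$-valued complexes, checks the strict identity $(j^*)^*\circ\int_*\circ\alpha'=\int_*\circ\alpha$ there by a one-line computation, and then inverts the quasi-isomorphisms $\int_*$ on cohomology. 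This sidesteps every choice-of-extension issue.

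Your route, by contrast, hinges on producing $\beta$ as a genuine element-wise simplicial morphism that is simultaneously linear in $\omega$, a cochain map, and satisfies $j^*\beta(\omega)(\sigma)=\sigma^*\omega$; and the repair you sketch does not supply this. The face-extension problem you propose to solve is not an instance of the extendability of $(A^*_{DR})_\bullet$: after choosing some $\Phi$ with the prescribed faces, the discrepancy $\sigma^*\omega-j^*\Phi$ must be lifted to an element of $\ker\bigl(j^*:(A^*_{DR})_n\to(\widetilde{A^*_{DR}})_n\bigr)$ \emph{with all faces zero}, i.e.\ what is needed is extendability (and acyclicity) of the kernel of $j^*$ as a simplicial object --- a statement proved nowhere in the paper and not implied by the Poincar\'e lemma on ${\mathbb A}^n$, which concerns the cohomology of $\Omega^*({\mathbb A}^n)$ rather than of $\ker j^*$. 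Moreover, a simplex-by-simplex skeletal induction gives you no control over linearity in $\omega$ or over $d\beta=\beta d$; to get those one would have to reformulate the problem as lifting the cochain map $\alpha'$ through the surjection $(j^*)_*$ with acyclic kernel (itself requiring the kernel extendability above) and apply a lifting lemma over $\R$. That reformulated argument does work --- and then your observation that any strict lift automatically satisfies $(j^*)_D(j^*)^*\beta=(j^*)_D\alpha$ is correct --- but as written your proof asserts the existence of $\beta$ on the strength of tools that do not deliver it, so there is a genuine gap at precisely the point the paper's integration-map argument was designed to avoid.
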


Observe that a natural map from $\Omega^*(X)$ to $A^*_{DR}(S^\infty_\bullet(X))$ cannot be defined 
in such a way as to give the map $\alpha$. 
However, %an integration map on $A^*_{DR}(S^\infty_\bullet(X))$ is defined well; see (3.2) below. 
Proposition \ref{prop:3.4} and the commutative diagram (3.2) yield that the integration $\int : (\widetilde{A_{DR}^*})_\bullet \to (C_{PL})_\bullet$ in 
Section \ref{sect3.2} gives rise to a quasi-isomorphism 
$\int_* :  \widetilde{A_{DR}^*}(K) \to  C_{PL}^*(K) \cong C^*(K)$
of differential graded modules for {\it each} simplicial set $K$, which is an isomorphism of algebras on the cohomology. 
This is a key to proving Proposition \ref{prop:alpha_beta}.  
%With this in mind,  we prove  Proposition \ref{prop:alpha_beta}. 

\begin{proof}[Proof of Proposition \ref{prop:alpha_beta}] We consider the diagram 
\[
\xymatrix@C40pt@R14pt{
           & \mathsf{Sets^{\Delta^{op}}}(S_\bullet^\infty(X), (\widetilde{A^*_{DR}})_\bullet) \ar[rd]^{\int_*}_(0.4){\simeq} & 
                     %  \mathsf{Sets^{\Delta^{op}}}(S_\bullet^D(X)_{\text{sub}}, C_{PL}) \ar@/^5.0pc/[dd]^(0.3){(j^*)^*}_(0.2){\simeq} \ar[d]^{\iota^*} 
                        \\
\Omega^* (X) \ar[ur]^{\alpha'} \ar[dr]_{\alpha}&  \mathsf{Sets^{\Delta^{op}}}(S_\bullet^\infty(X), (A^*_{DR})_\bullet) \ar[u]_{(j^*)_*}^{} \ar[d]^{(j^*)^*}  
\ar[r]^{\int_*}_{\simeq} & \mathsf{Sets^{\Delta^{op}}}(S_\bullet^\infty(X), C_{PL}) \ar[d]^{(j^*)^*}\\
          & \mathsf{Sets^{\Delta^{op}}}(S_\bullet^D(X), (A^*_{DR})_\bullet) \ar[r]_{\int_*}^{\simeq} &\mathsf{Sets^{\Delta^{op}}}(S_\bullet^D(X), C_{PL}).
}
\eqnlabel{add-20}
\]
By Lemmas \ref{lem:acyclic} and \ref{lem:representable}, we see that 
the method of acyclic models is applicable to the complexes  ${\mathbb Z}S_\bullet^D(X)$ and 
${\mathbb Z}S_\bullet^\infty(X)$. 
Then this yields that the restriction 
$(j^*) :  {\mathbb Z}S_\bullet^D(X) \to {\mathbb Z}S_\bullet^\infty(X)$ is a quasi-isomorphism and then so is 
the map $(j^*)^*$ on the right hand side. 
It follows that the center triangle and square are commutative by the definition of the integration map; see (3.1). 
%Moreover, the restriction map $j^* :  S_\bullet^D(X) \to S_\bullet^D(X)_{\text{sub}}$ factors through the map $\iota$. Then the right triangle is commutative. 
The commutative diagram (3.2) implies that the integration maps are quasi-isomorphisms. 
Then we see that maps $(j^*)_*$ and $(j^*)^*$ on the left hand side are quasi-isomorphisms. 
%Corollary \ref{cor:RHT} yields that the map $(j^*)_*$ is a quasi-isomorphism.  
%The right map $(j^*)^*$ is a quasi-isomorphism by ii) above. We see that other maps $(j^*)^*$ both induce surjective maps on the cohomology. 
Moreover,  a direct calculation indicates that $(j^*)^*\circ \int_* \circ \alpha' = \int_*\circ \alpha$. Therefore, the left triangle is commutative on cohomology. This completes the proof.   
%\noindent
%(ii) It is readily seen that $(\varphi_*)^* \circ \alpha = \alpha \circ  \varphi^*$ and 
%$(\varphi_*)^* \circ \alpha' = \alpha' \circ  \varphi^*$ for each smooth map $\varphi : X \to Y$. We have the result. 
\end{proof}

%%%%%%%%%%%%%%%%%%%%%%%%%%%%

%Remark 4.5
\begin{rem}
Let $(S, \C)$ be a parametrized stratifold; see Appendix B. By virtue of Lemma \ref{lem:D-top}, it is readily seen that 
the identity map on the set $S$ gives rise to the map $i : D(S) \to S$ in $\mathsf{Top}$.  
Consider the map $\iota : H_*(S^D_\bullet(k(S, \C))) \to H_*(D(S)) \to H_*(S)$ to the singular homology with coefficients in ${\mathbb Z}$, which is induced by 
the map $i$ and the inclusion mentioned in (4.1); see Appendix B for the functor $k$.

As seen in the proof of Proposition \ref{prop:alpha_beta}, the restriction map 
$(j^*) :  {\mathbb Z}S_\bullet^D(X) \to {\mathbb Z}S_\bullet^\infty(X)$ is a quasi-isomorphism. Moreover, 
the map $\iota$ is nothing but the composite $\iota'\circ H(j^*)$, where 
$\iota' : H_*(S^\infty_\bullet(k(S, \C))) \to H_*(D(S)) \to H_*(S)$ induced by the map $i$ 
and the restriction of the inclusion mentioned in (4.1) of $S_n^\infty(k(S, \C)))\subset S_n^D(k(S, \C)))=\mathsf{Diff}(\Delta_{\text{sub}}^n, k(S, \C))$. 
If $S$ is a manifold, then $\iota'$ is an isomorphism. 
This follows from the argument in the proof of \cite[V. 9.5 Lemma]{B}, which asserts that the smooth singular homology is isomorphic to the ordinary singular homology. By Remark \ref{rem:M-V}, we have the Mayer--Vietoris exact sequence for the homology $H_*(S^D_\bullet(X)))$ and hence for $H_*(S^\infty_\bullet(X)))$ of a diffeological space $X$. Then the argument with the Mayer--Vietoris exact sequence as in the proof of Theorem \ref{thm:main} enables us to conclude that 
$\iota$ is an isomorphism for every parametrized stratifold. 
\end{rem}

%%%%%%%%%%%%%%%%%%%%%%%%%%%%%%%%%%%

%Section 5
\section{Chen's iterated integral map in diffeology}\label{sect7} 
We begin by modifying the iterated integrals due to Chen \cite{C} in the diffeological setting.  
Let $N$ be a diffeological space and $\rho : \R \to I$ a cut-off function with $\rho(0)=0$ and $\rho(1)=1$. 
%in the proof of Lemma \ref{lem:acyclic}. 
Then a $p$-form $u$ on the diffeological space $I\times N$ is called an $\Omega^p(N)$-{\it valued function on} $I$ if 
for any plot $\psi : U \to N$ of $N$, the $p$-form $u_{\rho \times \psi}$ 
on $\R \times U$ is of the type 
\[
\sum a_{i_1\cdots i_p}(t, \xi)d\xi_{i_1}\wedge \cdots \wedge d\xi_{i_p},
\]
where $(\xi_1, ..., \xi_n)$ denotes the coordinates of $U$ we fix. For such an $\Omega^p(N)$-valued function $u$ on $I$, we define the integration 
$\int_0^1u \ dt \in \Omega^p(N)$ by
\[
(\int_0^1u \ dt)_\psi = \sum (\int_0^1a_{i_1\cdots i_p}(t, \xi) \ dt) d\xi_{i_1}\wedge \cdots \wedge d\xi_{i_p}. 
\]
Each $p$-form $u$ has the form $u = dt\wedge ((\partial /\partial t) \rfloor u) + u''$, where $(\partial /\partial t) \rfloor u$ and $u''$ are an 
$\Omega^{p-1}(N)$-valued function and an $\Omega^{p}(N)$-valued function on $I$, respectively. Let $F : I \times N^I \to N^I$ be the homotopy defined by 
$F(t, \gamma)(s) = \gamma(ts)$. The Poincar\'e operator $\int_F : \Omega(N^I) \to \Omega(N^I)$ associated with the homotopy $F$ is defined by 
$\int_F v = \int_0^1((\partial /\partial t) \rfloor F^*v)dt$. 
Moreover, for forms $\omega_1$, ..., $\omega_r$ on $N$, 
the {\it iterated integral} $\int \omega_1\cdots \omega_r$ is defined by $\int \omega_1 = \int_F\e_1^*\omega_1$ and 
\[
\int \omega_1\cdots \omega_r = \int_F\{J(\int \omega_1\cdots \omega_{r-1}) \wedge \e_1^*\omega_r\},
\]
where $\e_i$ denotes the evaluation map at $i$, $Ju =(-1)^{\deg u}u$ and $\int \omega_1\cdots \omega_r =1$ if $r=0$; see \cite[Definition 1.5.1]{C}. 
Observe that the Poincar\'e operator is of degree $-1$ and then $\int \omega_1\cdots \omega_r$ is of degree $\sum_{1\leq i \leq r}(\deg \omega_i -1)$. 

With a decomposition of the form $A^1\oplus d\Omega^0(N)$, 
we obtain a cochain subalgebra $A$ of $\Omega(N)$ which satisfies the condition that $A^p = \Omega(N)$ for $p> 1$ and $A^0=\R$. The cochain algebra $A$ gives rise to the normalized bar complex $B(\Omega(N), A, \Omega(N))$; see \cite[\S 4.1]{C}. Consider the pullback diagram 
\[
\xymatrix@C25pt@R20pt{
E_f \ar[r]^-{\widetilde{f}} \ar[d]_{p_f} & N^I \ar[d]^{(\e_0, \e_1)}\\ 
M \ar[r]_-{f} & N\times N
}
\eqnlabel{add-6}
\]
of $(\e_0, \e_1) : N^I \to N\times N$ along a smooth map $f : M \to N\times N$. 
We write $\overline{B}(A)$ for 
$B(\R, A, \R)$. 
Then we have a map  
\[
\mathsf{It} : \Omega(M)\otimes_{\Omega(N)\otimes\Omega(N)}B(\Omega(N), A, \Omega(N))\cong \Omega(M) \otimes_f \overline{B}(A) \to \Omega(E_f)
\]
defined by 
$\mathsf{It} (v\otimes [\omega_1| \cdots | \omega_r])=
p_f^*v\wedge \widetilde{f}^*\int \omega_1\cdots \omega_r$. 
Observe that the source of $\mathsf{It}$ gives rise to the differential on 
$\Omega(M) \otimes_f \overline{B}(A)$ by definition. Since $\rho(0)=0$ and $\rho(1)=1$ for the cut-off function $\rho$ which we use when defining the $\Omega^p(N)$-valued function on $I$, it follows that the result 
\cite[Lemma 1.4.1]{C} remains valid.  
Then the formula of iterated integrals with respect to the differential in \cite[Proposition 1.5.2]{C} implies that 
$\mathsf{It}$ is a well-defined morphism of differential graded $\Omega^*(M)$-modules.

%Remark 5.1
\begin{rem}
The cut-off function $\rho$ does not satisfy the formula $\rho(s)\rho(t) = \rho(st)$ for $s, t \in \R$ in general. Then we do not have the same assertion 
as that of  \cite[Lemma 1.5.1]{C} which allows us to deduce a constructive definition of the iterated integral as in (2.2); see \cite[page 840]{C}.
\end{rem}

%Theorem 5.2.
\begin{thm}\label{thm:general_main} Suppose that, in the pullback diagram \text{\em (5.1)}, the diffeological space $N$ is 
simply connected and $f$ is an induction.  
Assume further that the factor map $\alpha : \Omega^*(M) \to A_{DR}(S^D_\bullet(M))$ 
is a quasi-isomorphism and that the cohomology $H^*(S^D_\bullet(M))$ is of finite type, that is, each vector space $H^i(S^D_\bullet(M))$ is of finite dimension. Then the composite 
$\alpha \circ  \mathsf{It} : \Omega^*(M)\otimes_f \overline{B}(A) \to 
\Omega(E_f) \to A^*_{DR}(S^D_\bullet(E_f))$ is a quasi-isomorphism of 
$\Omega^*(M)$-modules.  
\end{thm}

Let $g: (X, \D^X) \to (Y, \D^Y)$ be an induction. Then by definition, the map $g$ is injective and the pullback diffeology $g^*(\D^Y)$ coincides with $\D^X$. 
Then the result \cite[I. 36]{IZ} yields that the map $g : X \stackrel{\cong}{\to} g(X)$ is a diffeomorphism, where 
$g(X)$ is the diffeological space endowed with subdiffeology.

\begin{proof}[Proof of Theorem \ref{thm:the_second_main}]
The diagonal map $\Delta : M \to M\times M$ is an induction. Then the result follows from Theorem \ref{thm:general_main}. 
\end{proof}

%Remark 5.3
\begin{rem}\label{rem:highlight}
Theorem \ref{thm:general_main} is regarded as a generalization of the result \cite[Theorem 0.1]{C2} in which $M$ and $N$ are assumed to be manifolds. 
The theorem due to Chen asserts that the homology of the bar complex 
$\Omega^*(M)\otimes_f \overline{B}(A)$ is isomorphic to the cubical cohomology of the Chen space $E_f$ via the pairing with the iterated integrals and cubic smooth chains; see \cite[(2.2)]{C2}.  
%Theorem \ref{thm:general_main} together with Proposition \ref{prop:Chen} reveals that 
%a correct target of Chen's iterated integrals is the new de Rham complex we introduce in this manuscript. 
\end{rem}

The rest of this section is devoted to proving Theorem \ref{thm:general_main}. 
Recall the definition of a local system over a simplicial set and its global sections.  Let $K$ be a simplicial set. 
We regard $K$ as a category whose objects are simplicial maps $\sigma : \Delta[p] \to K$ for $p \geq 0$ and whose  morphisms $\alpha : \tau \to \sigma$ are simplicial maps  $\alpha : \Delta[q] \to \Delta[p]$ 
with  $\tau = \sigma \circ \alpha$, 
where $\tau : \Delta[q] \to K$ and $\sigma : \Delta[p] \to K$. 
Then a {\it local system $F$ over $K$ of differential coefficients} is defined to be a contravariant functor from $K$ to the category $\mathsf{coChAlg}$ of unital cochain algebras with non-negative grading which satisfies the condition that the map $F(\alpha) : F_{\sigma}\to 
F_{\alpha^*\sigma}$ is a quasi-isomorphism for each $\alpha$ in the category $K$; see \cite[Definition 12.15]{H}. Observe that such a local system 
$F$ is an object of the functor category $\mathcal{E} := \mathsf{coChAlg}^{K^{\text{op}}}$. 

Forgetting the cochain algebra structure of $F$, the functor is regarded as an object of the functor category 
$U\mathcal{E} := \mathsf{Vect}^{K^{\text{op}}}$ to the category of vector spaces over ${\mathbb R}$.  
We define the space 
$\Gamma(F)$ of global sections of $F$ by $\Gamma(F):= \text{Hom}_{U\mathcal{E}}({\mathbb R}, F)$ 
which is isomorphic to $\text{Hom}_{\mathsf{Set}^{K^{\text{op}}}}(1, F)$, where $1$ denotes the terminal object of 
$\mathsf{Set}^{K^{\text{op}}}$ the category  of presheaves.  
%${\mathbb R}$ denotes the graded vector space  concentrated in degree zero.  
We observe that $\Gamma(F)$ is a cochain algebra whose structure is defined pointwise, for example, $(d\phi)_\sigma := (d\phi_\sigma)$ for a simplex $\sigma$ in $K$. We refer the reader to \cite[Chapter 12]{H} for details of such local systems. 

There are at least two kinds of {\it fibrations} in the category $\mathsf{Diff}$. One of them is a fibration 
$f :  X \to Y$ in the sense of Christensen and Wu \cite{C-W}, namely a smooth map which induces a fibration  
$S^D(f) : S^D_\bullet(X) \to S^D_\bullet(Y)$ of simplicial sets in $\mathsf{Sets}^{\Delta^{op}}$.
An important example of such a fibration is a diffeological bundle in the sense of  Iglesias-Zemmour \cite[Chapter 8]{IZ} whose fibre is fibrant;  
see \cite[Proposition 4.24]{C-W}. 

Another type concerns mapping spaces with evaluation maps. For example, with the interval $I=[0,1]$, 
the map $(\e_0, \e_1) : N^I \to N\times N$ defined by the evaluation map $\e_i$ at $i$ is a fibration in $\mathsf{Top}$ if $N^I$ is endowed with compact open topology; that is, the map $(\e_0, \e_1)$ enjoys the right lifting property with respect to the inclusion $\Delta^n \to \Delta^n\times \{0\} \to \Delta^n\times I$ for 
$n \geq 0$. However, it seems that a smooth lifting problem is not solved in general for such an evaluation map in $\mathsf{Diff}$. 
Then in order to prove Theorem \ref{thm:general_main}, we reconstruct the Leray--Serre spectral sequence and algebraic models 
for path spaces in the diffeological framework. 

In what follows, we may write  $A^*_{DR}(X)$ and $A^*(X)$ for  $A^*_{DR}(S^D_\bullet(X)_{\text{sub}})$ and 
$A^*_{DR}(S^D_\bullet(X))$, respectively. Observe that 
the map $(j^*) :  S^D_\bullet(X) \to S^D_\bullet(X)_{\text{sub}}$ 
induced by inclusion $j : \Delta^n_{\text{sub}} \to  {\mathbb A}^{n}$ 
gives rise to a natural quasi-isomorphism  
\[
(j^*)^* : A^*_{DR}(X) \to A^*(X).
\eqnlabel{add-7}
\] This follows from II) in Section \ref{sub4.1}, Proposition \ref{prop:3.4} and the naturality of the integration map; 
see the proof of Proposition \ref{prop:alpha_beta}.
The argument in \cite[Sections 5, 6 and 7]{Grivel} due to Grivel enables us to obtain the Leray--Serre spectral sequence with a local system for a fibration and the Eilenberg--Moore spectral sequence for a fibre square. 

%Theorem 5.4.
\begin{thm} \label{thm:LSSS}
Let $\pi : E \to M$ be a smooth map between path-connected diffeological spaces with path-connected fibre $L$ which is 
\text{\em i)} a fibration in the sense of Christensen and Wu or \text{\em ii)} 
the pullback of the evaluation map $(\e_0, \e_1) : N^I \to N\times N$ for a connected diffeological space $N$ along an induction 
$f :  M \to N\times N$.  Suppose further that in the case \text{\em ii)} the cohomology $H(A^*(M))$ is of finite type. Then one has the Leary--Serre spectral sequence 
$\{_{LS}E_r^{*,*}, d_r\}$ converging to 
$H(A^*(E))$ as an algebra with an isomorphism 
\[
_{LS}E_2^{*,*}\cong H^*(M, \mathcal{H}^*(L))
\]
of  bigraded algebras, 
where $H^*(M, \mathcal{H}(L))$ is the cohomology with the local coefficients $\mathcal{H}^*(L)=\{H(A^*(L_c))\}_{c\in S^D_0(M)}$; see Lemma \ref{lem:DiffCoeff} below. 
\end{thm}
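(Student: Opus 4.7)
The plan is to follow Grivel's strategy from \cite{Grivel}: construct a local system $\mathcal{F}$ of DGA's over the simplicial set $S^D_\bullet(M)$ whose stalk over a vertex $c\in S^D_0(M)$ represents $A^*(F_c)$ up to quasi-isomorphism, identify the global sections $\Gamma(\mathcal{F})$ with $A^*(E)$ up to quasi-isomorphism, and then filter $\Gamma(\mathcal{F})$ by the skeletal filtration of $S^D_\bullet(M)$ to produce the spectral sequence. For each simplex $\sigma : \Delta[n] \to S^D_\bullet(M)$, write $|\sigma| : \mathbb{A}^n \to M$ for the adjoint smooth map; the value $\mathcal{F}(\sigma)$ will be of the form $A^*_{DR}(S^D_\bullet(E_\sigma))$ for a suitable diffeological space $E_\sigma$ lying over $\mathbb{A}^n$.

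In case (i), since $\pi$ is a Christensen--Wu fibration, $S^D(\pi)$ is a Kan fibration and we may take $E_\sigma$ to be the strict pullback of $\pi$ along $|\sigma|$. The contractibility of $\mathbb{A}^n$ combined with path-connectedness of $F$ implies (via smooth lifting of a contraction) that $E_\sigma$ is smoothly homotopy equivalent to $F$; the homotopy axiom from Section 4 then gives that every restriction $\mathcal{F}(\alpha)$ is a quasi-isomorphism, so $\mathcal{F}$ is a local system in the sense of Halperin. In case (ii), a strict pullback along $|\sigma|$ need not carry the right homotopy type because the evaluation map $(\varepsilon_0,\varepsilon_1) : N^I \to N\times N$ lacks smooth lifting in general. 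Here I would instead replace $E_\sigma$ by the homotopy pullback $\widetilde{E}_\sigma$ of $N^I \to N\times N$ along $f\circ |\sigma|$, i.e. the fibre product of $N^I$ with the diffeological path space over $\mathbb{A}^n \to N\times N$; since $f$ is an induction and $\mathbb{A}^n$ is smoothly contractible, a straight-line smooth homotopy shows $\widetilde{E}_\sigma \simeq F$, so again each $\mathcal{F}(\alpha)$ is a quasi-isomorphism.

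Once $\mathcal{F}$ is in place, the comparison $A^*(E) \stackrel{\simeq}{\to} \Gamma(\mathcal{F})$ is obtained by an acyclic-models argument exactly as in the proof of Theorem \ref{thm:main}: both sides are functors on diffeological spaces over $M$ that agree on the representing models (affine simplices), and both are acyclic there by Lemma \ref{lem:acyclic} combined with the extendability of $(A^*_{DR})_\bullet$ established in Section \ref{sect3}. In case (ii) the homotopy-pullback replacement is compatible with $A^*_{DR}(S^D_\bullet(\,\cdot\,))$ because of the induction hypothesis on $f$ and the quasi-isomorphism (\ref{add-7}), so the natural map from $A^*(E)$ into the $0$-th column of the simplicial assembly still factors through $\Gamma(\mathcal{F})$ as a quasi-isomorphism of DGA's.

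The skeletal filtration of $S^D_\bullet(M)$ then filters $\Gamma(\mathcal{F})$, and the resulting spectral sequence converges to $H(A^*(E))$ because the finite-type assumption on $H(A^*(M))$ controls the filtration degreewise. A cell-by-cell computation identifies $E_1^{p,q}$ with the cochains $C^p(S^D_\bullet(M); \mathcal{H}^q(F))$ taking values in the local coefficient system $\mathcal{H}^*(F) = \{H(A^*(F_c))\}_{c\in S^D_0(M)}$, whence $E_2^{*,*} \cong H^*(M,\mathcal{H}^*(F))$. The multiplicative structure survives because each $\mathcal{F}(\sigma)$ is itself a DGA and the integration maps of Theorem \ref{thm:main} are multiplicative up to homotopy, so the spectral sequence is one of bigraded algebras. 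The main obstacle will be the comparison step $A^*(E) \simeq \Gamma(\mathcal{F})$ in case (ii): without a genuine smooth lifting property for $(\varepsilon_0,\varepsilon_1)$ one has to verify carefully that replacing strict pullbacks by homotopy pullbacks over every simplex $\sigma$ assembles back to a global object quasi-isomorphic to $A^*(E)$, and this is precisely where the hypothesis that $f$ is an induction is used.
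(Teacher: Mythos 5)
Your overall architecture is the same as the paper's: both proofs run Grivel's construction for the functor $A^*(\;)$, build a local system of DGA's over the singular simplicial set of $M$ indexed by simplices $\sigma$, replace strict pullbacks by homotopy pullbacks in case ii), and filter the global sections by base degree. However, two of your steps conceal genuine gaps, and they are exactly the points where the paper has to work hardest. First, the claim that ``a straight-line smooth homotopy shows $\widetilde{E}_\sigma \simeq F$'' in case ii) is unjustified. A point of the homotopy pullback $P^h_\sigma$ is a triple $(a,\zeta,\gamma)$ in which $\zeta$ and $\gamma$ are paths; deforming it into the fibre over the last vertex forces you to transport $\gamma$ by concatenating with other paths, and concatenation of smooth paths is smooth only after reparametrizing by cut-off functions. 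No straight-line homotopy in an ambient affine space stays inside $P^h_\sigma$. The paper's Lemma \ref{lem:KEY} handles this with an explicitly constructed ``canonical lift'' $\overline{\overline{\tau}}$, a retraction $r : I\times I \to J$, the concatenation $\omega^{-1}\ast\ell\ast\omega$, a plot-by-plot smoothness check, and hand-built maps $h_m$ satisfying the simplicial homotopy identities; Lemma \ref{lem:KEY2} then compares the strict and homotopy pullbacks. None of this is routine, and your proposal does not indicate how it would be carried out.

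Second, your identification $A^*(E)\simeq\Gamma(\mathcal{F})$ by acyclic models is not set up correctly and has a size problem: if $\mathcal{F}(\sigma)=A^*_{DR}(S^D_\bullet(E_\sigma))$ uses the full smooth singular simplicial set of each pullback, there is no evident natural map between $A^*(E)$ and $\Gamma(\mathcal{F})$ realizing the comparison, and $\Gamma(\mathcal{F})$ is not a functor on $\mathsf{Diff}$ of the kind to which Theorem \ref{thm:AcyclicModels} applies. The paper instead takes $F_\sigma=A^*_{DR}(\underline{P_\sigma})$, where $\underline{P_\sigma}$ consists only of simplices lying affinely over $\Delta^n$; with this choice Halperin's 19.21 Lemma gives an honest isomorphism $a : A_{DR}(E_f)\to\Gamma(F)$, with no acyclic-models step needed. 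Relatedly, you never verify that your local system is extendable (the paper's Lemma \ref{lem:DiffCoeff}, resting on Lemma \ref{lem:extendability}); extendability is what makes Halperin's machinery identify the $E_1$-term with cochains on $S^D_\bullet(M)_{\text{sub}}$ with local coefficients $\mathcal{H}^*(F)$, so without it the computation of $_{LS}E_2$ and the multiplicative comparison remain incomplete.
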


%Theorem 5.5
\begin{thm} \label{thm:EMSS}
Let $\pi : E \to M$ be the smooth map as in Theorem \ref{thm:LSSS} with the same assumption, 
$\varphi : X \to M$ a smooth map from a connected diffeological space $X$ for which the cohomology $H(A^*(X))$ is of finite type and 
$E_\varphi$ the pullback of $\pi$ along $\varphi$.  Suppose further that $M$ is simply connected in case of 
\text{\em i)} and $N$ is simply connected in case of \text{\em ii)}. Then one has the Eilenberg--Moore spectral sequence 
$\{_{EM}E_r^{*,*}, d_r\} $ converging to 
$H(A^*(E_\varphi))$ as an algebra with an isomorphism 
\[
_{EM}E_2^{*,*} \cong \text{\em Tor}_{H(A^*(M))}^{*,*}(H(A^*(X)), H(A^*(E)))
\]
of bigraded algebras. 
\end{thm}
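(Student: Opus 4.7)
The plan is to mimic Grivel's construction in \cite[Sections 5, 6, 7]{Grivel}: build a two-sided bar-construction model for $A^*(E_\varphi)$ out of the local-system model of $\pi$ produced in the proof of Theorem \ref{thm:LSSS}, then filter by bar length to read off the spectral sequence. First I would apply the construction underlying Theorem \ref{thm:LSSS} to $\pi : E \to M$ to produce a local system $\mathcal{L}$ of DGAs on $S^D_\bullet(M)$ whose stalk at a vertex $c$ is quasi-isomorphic to $A^*(F_c)$, together with a quasi-isomorphism $A^*(E) \stackrel{\simeq}{\longrightarrow} \Gamma(\mathcal{L})$ of $A^*(M)$-DGAs. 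The simple-connectivity hypothesis on $M$ (respectively $N$) forces $\mathcal{L}$ to have locally constant cohomology stalks, so the fibre DGA $A^*(F)$ is well defined up to quasi-isomorphism; in case ii), the induction hypothesis on $f$ is used to identify $\mathcal{L}$ with the $f$-pullback of a local system on $S^D_\bullet(N)$ attached to the evaluation map $(\varepsilon_0, \varepsilon_1)$.

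Next I would pull back the construction along $\varphi$. The pullback square defining $E_\varphi$ yields a simplicial map $S^D_\bullet(E_\varphi) \to S^D_\bullet(E)$ covering $S^D_\bullet(\varphi)$, and naturality of the local-system construction produces a quasi-isomorphism $A^*(E_\varphi) \stackrel{\simeq}{\longrightarrow} \Gamma(\varphi^*\mathcal{L})$, where $\varphi^*\mathcal{L}$ is the local system on $S^D_\bullet(X)$ obtained by precomposition with $S^D_\bullet(\varphi)$. This comparison passes through the natural quasi-isomorphism (5.2) and the acyclic-models machinery of Appendix A, in parallel with \cite[\S 6]{Grivel}.

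Third, I would carry out the algebraic step. Consider the two-sided bar construction $B(A^*(X), A^*(M), A^*(E))$ together with its canonical map to $A^*(X) \otimes_{A^*(M)} A^*(E)$ and thence to $A^*(E_\varphi)$. Using $\Gamma(\mathcal{L})$ as a semifree $A^*(M)$-module replacement of $A^*(E)$, the previous step shows that this composite is a quasi-isomorphism. Filtering $B(A^*(X), A^*(M), A^*(E))$ by bar length then produces the spectral sequence $\{ {}_{EM}E_r^{*,*}, d_r \}$ converging to $H(A^*(E_\varphi))$; convergence is ensured by the finite-type hypotheses on $H(A^*(X))$ and $H(A^*(M))$, and the standard computation on $E_1$ identifies ${}_{EM}E_2^{*,*} \cong \mathrm{Tor}^{*,*}_{H(A^*(M))}(H(A^*(X)), H(A^*(E)))$ as a bigraded algebra, the multiplicative structure being induced by the shuffle product on the bar construction.

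The main obstacle is establishing $A^*(E_\varphi) \simeq \Gamma(\varphi^*\mathcal{L})$ in case ii), since $(\varepsilon_0, \varepsilon_1) : N^I \to N \times N$ is not known to be a Christensen--Wu fibration and smooth path-lifting can fail in $\mathsf{Diff}$. Overcoming it requires combining the induction property of $f$, the natural quasi-isomorphism (5.2), and an acyclic-models argument that substitutes for strict path-lifting by a quasi-isomorphism at the level of forms; this is precisely the step that genuinely uses the simple-connectivity hypothesis on $N$, since it forces $\mathcal{L}$ to be cohomologically constant and permits an inductive reduction to the trivial-fibration case over the simplicial skeleta of $S^D_\bullet(M)$.
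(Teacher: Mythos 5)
Your global outline (a local system of DGAs over $S^D_\bullet(M)$ modelling $\pi$, compatibility under pullback along $\varphi$, and a Tor description via a bar-type resolution over $A^*(M)$) matches the intent of the paper, and for case i) the paper does exactly what you propose: it invokes Grivel's arguments verbatim for the functor $A^*(\,)$. The divergence, and the genuine gap, is in case ii). You correctly isolate the main obstacle --- $(\e_0,\e_1):N^I\to N\times N$ is not known to be a fibration in $\mathsf{Diff}$, so one cannot assert that the local system attached to $\nu: E_f\to M$ has stalks computing $A^*(F_c)$ --- but your proposed resolution is not an argument. An acyclic-models comparison cannot ``substitute for strict path-lifting'': the assertion to be proved is a geometric one about the restriction of $\nu$ over a single smooth simplex $\sigma:\Delta^n_{\text{sub}}\to M$, namely that the inclusion of the fibre over the top vertex induces an isomorphism on $H(A^*_{DR}(\,))$, and there is no functor-comparison on models that yields this. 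The paper's actual resolution is to replace the pullback $P_\sigma$ by the homotopy pullback $P_\sigma^h$ and to construct, by hand, smooth simplicial homotopies (the ``canonical lift'' $\overline{\overline{\tau}}$, built from cut-off functions, the retraction $r:I\times I\to J$, and concatenation of paths made smooth by reparametrization) proving that both $A^*_{DR}(\underline{P_\sigma^h})\to A^*_{DR}(P_{\sigma(n)})$ and $A^*_{DR}(\underline{P_\sigma^h})\to A^*_{DR}(\underline{P_\sigma})$ are quasi-isomorphisms (Lemmas \ref{lem:KEY} and \ref{lem:KEY2}); only then does $F=\{A^*_{DR}(\underline{P_\sigma})\}_\sigma$ become an extendable local system of differential coefficients (Lemma \ref{lem:DiffCoeff}). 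None of this uses simple connectivity of $N$, so your claim that simple connectivity is what ``permits'' the local-system identification is a misattribution: in the paper it enters only later, to make the local system simple so that the comparison theorem of spectral sequences applies in Proposition \ref{prop:KSextension}, and to put one in the range of validity of the Eilenberg--Moore theorem.

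A secondary difference worth noting: rather than filtering a two-sided bar construction by bar length, the paper runs case ii) through a KS extension $A_{DR}(M)\otimes\wedge V\to A_{DR}(E_f)$, proves via Proposition \ref{prop:KSextension} that its fibre $\wedge V\to A_{DR}(P_m)$ is a quasi-isomorphism, and then quotes \cite[20.6]{H} for the differential Tor identification and \cite[Th\'eor\`eme 4.1.1]{VP} for the multiplicative Eilenberg--Moore spectral sequence, finishing with the natural quasi-isomorphism (5.2) to pass from $A^*_{DR}$ to $A^*$. Your bar-length filtration would give the same $E_2$-term and the same convergence under the finite-type hypotheses, so this part of your route is legitimate; but without the homotopy-pullback lemmas your step establishing $A^*(E_\varphi)\simeq\Gamma(\varphi^*\mathcal{L})$ in case ii) has no proof, and that is the step on which the whole theorem rests.
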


\begin{proof}[Proofs of Theorems \ref{thm:LSSS} and \ref{thm:EMSS}]
For the case i), the Leray--Serre spectral sequence and the Eilenberg--Moore spectral sequence 
are obtained by applying the same argument as in the proofs of  
\cite[5.1 Theorem and 7.3 Theorem]{Grivel} to the functor $A^*( \ ):=A^*_{DR}(S^D( \ )_\bullet)$. Observe that 
Dress' construction for the Leary-Serre spectral sequence is applicable to our setting; see \cite[3.3]{Grivel} and \cite{McCleary}.  

To consider the case ii), we use $A^*_{DR}( \ )$ instead of $A^*(\ )$. Then, the spectral sequence $\{'E_r^{*,*}, 'd_r\}$ constructed in the proof of 
Proposition  \ref{prop:KSextension} below gives the one in Theorem \ref{thm:LSSS}. 
As for the Eilenberg--Moore spectral sequence, 
by virtue of the result \cite[20.6]{H} and Proposition \ref{prop:KSextension} below, we have $H^*(A_{DR}(E_f)) \cong 
\text{Tor}_{A_{DR}^*(M)}(A_{DR}^*(X), A_{DR}^*(E))$ as an algebra; see \cite[Th\'eor\`eme 4.1.1]{VP}.  
As a consequence, the natural quasi-isomorphism $(j^*)^*$ in (5.2) yields the result in Theorem  \ref{thm:EMSS}. 
\end{proof}

%Remark 5.6
\begin{rem}
In Theorems  \ref{thm:LSSS} and \ref{thm:EMSS}, we have dealt with {\it fibrations} of the type i) and of the type ii). We do not know whether the second notion of fibration coincides with the first notion. Therefore, we have considered the two cases separately. 
%
%One might regard that an appropriate concept of a smooth relative CW-complex $i : A \to X$ enables us to obtain a map 
%$i^* : \text{map}(X, Y) \to \text{map}(A, Y)$ with the homotopy extension property in $\mathsf{Diff}$ for a diffeological space $Y$, where 
%$\text{map}(X, Y)$ and  $\text{map}(A, Y)$ are endowed with functional diffeology. As consequence, we expect the spectral sequences as in 
%Theorems  \ref{thm:LSSS} and \ref{thm:EMSS} for the map $i^*$. We do not pursue this topic in this manuscript.  
\end{rem}

To prove Theorem \ref{thm:general_main}, the argument in 
\cite[Chapter 19]{H} will be replaced with that in our setting.  Recall the standard face and degeneracy maps 
$\eta_i : \Delta_{\text{sub}}^{p-1} \to \Delta_{\text{sub}}^{p}$ and $\zeta_j : \Delta_{\text{sub}}^{p+1} \to \Delta_{\text{sub}}^{p}$. 
For $0 \leq m \leq p$, let $\alpha_m :   \Delta_{\text{sub}}^{p+1} \to  \Delta_{\text{sub}}^{p} \times I$ be a smooth map defined by 
$\alpha_m (x) =(\zeta_m(x), \sum_{i=m+1}^{p+1}x_i)$, where $x =(x_0, x_1, ..., x_{p+1})$. Observe that the maps $\alpha_m$ give the standard triangulation of $\Delta^{p}\times I$ in the category of topological spaces. For any nondecreasing map $u : [p] \to [n]$, we denote by the same notation the affine map  $\Delta_{\text{sub}}^{p} \to  \Delta_{\text{sub}}^{n}$ defined by $u$. Such an affine map $u$ gives a set map 
$\overline{u} : \Delta^{p}\times I \to \Delta^{n}$ defined by 
$(\overline{u} \alpha_m)(\sum_{i=0}^{p+1} \lambda_i v_i) = \sum_{i=0}^{m} \lambda_i v_{u(i)} + (\sum_{i=m+1}^{p+1}x_i)v_n$; see \cite[(19.4)]{H}. It is readily seen that the composite $\overline{u} \alpha_m$ is a smooth map for each $m$.

Let $N$ be a diffeological space and $f : M \to N\times N$ an induction.  Then the pullback $\nu' : E_f \to M$ of the map $(\e_0, \e_1) : N^I \to N\times N$ along $i$ 
is identified with the  map 
\[
\nu := f^{-1}\circ (\e_0, \e_1) : P_MN:=\{\gamma \in N^I \mid (\e_0, \e_1)(\gamma) \in f(M)\} \to M. 
\]  
We consider the homotopy pullback $\pi : P_\sigma^h \to \Delta_{\text{sub}}^{n}$ of $\nu$ along the $n$-simplex 
$\sigma : \Delta_{\text{sub}}^{n} \to M$. By definition, it is given by  
\[
P_\sigma^h := \{(a, \zeta, \gamma) \in \Delta_{\text{sub}}^{n}\times M^I\times P_MN \mid \sigma(a) = \zeta(0), \ \nu(\gamma) = \zeta(1)\} 
\] 
with $\pi$ the projection. Let $\underline{P_\sigma^h}$ be the sub-simplicial set of 
$S^D_\bullet(P_\sigma^h)_{\text{sub}}$ consisting of  $p$-simplices each of which satisfies the condition that $\pi\circ \sigma : \Delta^{p} \to \Delta^{n}$ is the affine map defined by a nondecreasing map $[p] \to [n]$.  
By the same way,  we have a sub-simplicial set $\underline{P_\sigma}$ of 
$S^D_\bullet(P_\sigma)_{\text{sub}}$, where $P_\sigma$ denotes the pullback of $\nu$ along the $n$-simplex 
$\sigma : \Delta_{\text{sub}}^{n} \to M$. 
The restriction map of $\sigma$ to the vertex $\sigma(n)$ gives rise to the pullback $P_{\sigma(n)}$ of $\nu : P_MN \to M$. Then we have the natural inclusion 
$j : P_{\sigma(n)} \to P_\sigma^h$ defined by $j(\gamma) = (\sigma(n), C_{\sigma(n)}, \gamma)$, where $C_{\sigma(n)}$ is the constant map at 
$\sigma(n)$. The following lemma is proved by modifying the argument of the proof of \cite[Lemma 19.9]{H} in the diffeological framework. 

%Lemma 5.7
\begin{lem}\label{lem:KEY}
The homomorphism 
%\[
$
\xymatrix@C25pt@R15pt{
A^*_{DR}(P_{\sigma(n)}) & 
\ar[l]_-{A(j)} A^*_{DR}(\underline{P_\sigma^h}) 
%\xymatrix@C25pt@R15pt{
%A^*_{DR}(P_{v_n}) & \ar[l]_-{A(\gamma_2)} A^*_{DR}(\underline{P}) & 
%\ar[l]_-{A(\gamma_1)} A^*_{DR}(P) 
%}
}
%\]
$
induced by the natural map  $j : S^D_\bullet(P_{\sigma(n)})_{\text{\em sub}} \to \underline{P_\sigma^h}$ 
of simplicial sets is a quasi-isomorphism. 
\end{lem}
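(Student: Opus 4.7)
The plan is to adapt the proof of \cite[Lemma 19.9]{H} to the diffeological framework, exploiting the smooth triangulation maps $\alpha_m$ and the affine maps $\overline{u}$ introduced just above the lemma statement.

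The first and principal step is to construct a simplicial map $r : \underline{P_\sigma^h} \to S_\bullet^D(P_{\sigma(n)})_{\text{sub}}$ in the reverse direction to $j$. Given a $p$-simplex $\tau=(u,\zeta,\gamma)$ of $\underline{P_\sigma^h}$, with $u$ the affine map associated to a nondecreasing $[p]\to[n]$, and for each $x\in \Delta^p$, I would define $r(\tau)(x)\in P_{\sigma(n)}$ as the smooth concatenation in $N$ of five pieces: (i) $\pi_1\circ f\circ \sigma$ applied to the affine segment $s\mapsto (1-s)v_n + s\,u(x)$ in $\Delta^n$; (ii) $\pi_1\circ f\circ\zeta(x)$; (iii) the path $\gamma(x)\in P_MN$; (iv) the reverse of $\pi_2\circ f\circ \zeta(x)$; and (v) the reverse of $\pi_2\circ f\circ \sigma$ on the same affine segment. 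Since $f$ is an induction, the resulting loop in $N$ lies over $f(\sigma(n))\in N\times N$ and therefore belongs to $P_{\sigma(n)}$. Joint smoothness in $x$ and in the time parameter is secured by smoothing the concatenation with the cut-off function $\rho$ already used in Lemma \ref{lem:extendability}, while simplicial naturality with respect to face and degeneracy operators of $\underline{P_\sigma^h}$ reduces to the compatibility of $\overline{u}\alpha_m$ with composition of nondecreasing maps of posets.

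Next, I would exhibit simplicial homotopies $r\circ j\simeq \mathrm{id}$ and $j\circ r\simeq \mathrm{id}$. The first is a standard reparametrization that absorbs the two constant-path factors appearing when $u=v_n$ and $\zeta=C_{\sigma(n)}$. The second, which is the substantive direction, is obtained by affinely sliding $u$ toward the constant map at $v_n$ while simultaneously prepending to $\zeta$ the auxiliary affine segment in $M$; at each time $t\in[0,1]$ the first coordinate remains an affine map associated with a nondecreasing map $[p]\to[n]$, so the homotopy stays inside $\underline{P_\sigma^h}$ rather than only inside the ambient $S_\bullet^D(P_\sigma^h)_{\text{sub}}$.

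Applying the functor $A_{DR}^*(-)$ then converts these simplicial homotopies into cochain homotopies: the extendability of $(A_{DR}^*)_\bullet$ established in Section \ref{sect3} supplies a compatible path object in the category of DGAs, so that $A(j)\circ A(r)$ and $A(r)\circ A(j)$ are cochain-homotopic to the identities, and $A(j)$ is a quasi-isomorphism. The main obstacle, exactly as in Halperin's original argument, is the \emph{coherent} smoothing demanded by Step 1: the cut-off function $\rho$, the triangulation $\{\alpha_m\}$, and the concatenation convention must be chosen simultaneously so that all face and degeneracy identities hold on the nose rather than only up to reparametrization. Once this bookkeeping is carried out the remainder of the argument is formal.
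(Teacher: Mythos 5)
Your proposal is correct and follows essentially the same route as the paper: an explicit simplicial homotopy inverse to $j$ built by dragging a simplex of $\underline{P_\sigma^h}$ to the top vertex $v_n$ and conjugating the path data, with the affine maps $\overline{u}$, the prism triangulation $\{\alpha_m\}$ and cut-off reparametrizations supplying the on-the-nose simplicial homotopy identities (your five-fold concatenation is just a flattened form of the paper's ``canonical lift''). The only immaterial divergence is the last step: the paper transfers the simplicial homotopy equivalence to $A^*_{DR}$ by first passing to singular chains and invoking Theorem \ref{thm:main}, rather than applying $A^*_{DR}$ directly and using extendability and the Poincar\'e lemma to produce the cochain homotopies.
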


\begin{proof} By Theorem \ref{thm:main}, it suffices to show that the map $j_* : C_*( S^D_\bullet(P_{\sigma(n)})) \to C_*(\underline{P_\sigma^h})$ of chain complexes induced by $j$ is a chain homotopy equivalence. We identify $M$ with the subspace $f(M)$ of $N\times N$. 
For each $\tau \in( \underline{P_\sigma^h})_p$, we have the map 
$\overline{\pi\circ \tau} : \Delta^{p} \times I \to \Delta^{n}$ mentioned above. Let $J$ denote the space $(I \times \{0, 1\}) \cup (\{0\} \times I)$ 
endowed with the subdiffeology of $I\times I$. 
For each point $z \in I\times I$, join $(2, \frac{1}{2})$ to $z$ 
by a straight line, and make the line beyond $z$ until it meets $J$ at a point $z'$. 
Then one defines a retraction $r : I\times I \to J$ by $r(z)=z'$ as a set map. 
Moreover, by using the projections $\text{pr}_i$ from $P_\sigma^h$ in the $i$th factor and the adjoint $ad$ of a given map to $M^I$,  
we define a map $\widetilde{\tau} : \Delta^{p}\times  I \times I \to M$ by 
\[
\widetilde{\tau} := ((\sigma\circ \overline{\pi\circ \tau})_0\circ(1\times \rho)) \cup  ((\sigma\circ \overline{\pi\circ \tau})_1\circ (1\times \rho)) \cup (
\omega^{-1} \ast \ell \ast \omega) \circ (1\times r), 
\]
where $\omega(z, 0, s)=ad(\text{pr}_2\tau)(z, \rho(s))$, $(\omega)^{-1}(z, 0, s)=ad(\text{pr}_2\tau)(z, 1- \rho(s))$, 
$\ell(z, 0, s)=ad(\text{pr}_3\tau)(z, \rho(s))$ and $\rho$ is a cut-off function.  Moreover, 
$(\sigma\circ \overline{\pi\circ \tau})_i$ denotes the map defined by $(\sigma\circ \overline{\pi\circ \tau})$ on 
$\Delta^{p}\times I \times \{ i \}$ for $i = 0, 1$.  
We observe that $\widetilde{\tau}$ is smooth on $(\text{Im}\ \alpha_m) \times I$. In fact, 
the map is constant in appropriate neighborhoods of the rays from $(2, \frac{1}{2})$ to the points $(0,0 )$, $(0,1)$, $(0, \frac{1}{3})$ and $(0, \frac{2}{3})$ in $I\times I$. 
For any plot $p :  U \to \Delta^{p}\times  I \times I$, we take a point $r \in U$ and write $p(r) = (x, (a,b))$. 
Then there exists an open neighborhood $U$ of $(a, b)$ such that  $\widetilde{\tau}\circ p |_A$ is  constant or  
a composite of $(1\times r)$ and $(\sigma\circ \overline{\pi\circ \tau})_i$ or $(\omega^{-1} \ast \ell \ast \omega)$, 
where $A = p^{-1}(\Delta^{p}\times U)$. This implies that  $\widetilde{\tau}\circ p |_A$ is locally smooth. It follows 
that $\widetilde{\tau}\circ p$ is in $\D^M$ the diffeology of $M$ and then $\widetilde{\tau}$ is smooth.

We define $\overline{\overline{\tau}}' : \Delta^{p}\times  I  \to \text{Map}(I, M)$ by the adjoint to $\overline{\tau}$. The homotopy 
$H_\tau : (\Delta^{p}\times  I) \times I \to M$ from $\sigma\circ (\overline{\pi\circ \tau})$ to $\nu \circ \overline{\overline{\tau}}'$ defined by 
$H_\tau((z, t)), s) = (\sigma\circ \overline{\pi\circ \tau})(z,(1-s)t + s\rho(t))$ 
gives a map  
 $\overline{\overline{\tau}} : \Delta^{p}\times  I \to P_\sigma^h$ with 
\[
\overline{\overline{\tau}}(z,t) =( (\overline{\pi\circ \tau})(z,t), ad(H_\tau)(z,t), \overline{\overline{\tau}}'(z, t)). 
\]
Observe that the domain of the map will be restricted to the space $(\text{Im}\ \alpha_m)$ when constructing a simplicial homotopy below. We call $\overline{\overline{\tau}}$ 
the {\it canonical lift} associated with $\tau$. 
\[
\xymatrix@C35pt@R1pt{
\bullet \ar@{-}[rr]^-{(\sigma\circ \overline{\pi\circ \tau})_1\circ (1\times \rho))} \ar@{-}[dd]_{\omega^{-1}} & & \bullet \ar@{.}[dddddd] &&\\
   & &  & &\\
\bullet \ar@{-}[dd]_-{\ell} && && \\
   & && & \bullet (2, \frac{1}{2}) \ar[lllluuu] \ar[lllld]\\
\bullet \ar@{-}[dd]_-{\omega} && && \\
     & & & & \\
\bullet \ar@{-}[rr]_-{(\sigma\circ \overline{\pi\circ \tau})_0\circ (1\times \rho))}  & & \bullet  && 
}
\eqnlabel{add-8}
\]
Since $\overline{\pi\circ \tau}(z, 1)$ is the constant map at $v_n$, it follows that $\overline{\overline{\tau}}( \text{-}, 1)$ factors through $P_{v_n}$. 
Then we define a simplicial map 
$\lambda : \underline{P_\sigma^h} \to S^D_\bullet(P_{\sigma(n)})_{\text{sub}}$ by $\lambda(\tau) =  \overline{\overline{\tau}}(\text{-}, 1)$. 

In order to show that $j \circ \lambda$ is homotopic to the identity,  
we define $h_m : (\underline{P_\sigma^h})_p \to (\underline{P_\sigma^h})_{p+1}$ by $h_m(\tau)= \overline{\overline{\tau}} \circ \alpha_m$ for any 
$0\leq m \leq p$ by using the canonical lift. Since $\pi h_m(\tau)(z) = \overline{\pi\circ \tau} \alpha_m(z)$, it follows that $h_m$ is well defined. 
Then we see that $\{h_m\}_{0\leq m\leq p}$ gives rise to a simplicial homotopy; that is, the maps $h_m$ satisfy the following equalities 
\[
d_jh_m = 
\begin{cases}
f & \text{if $j=m=0$,} \\
h_{m-1}d_j & \text{if $j < m$,}   \\
d_jh_{m+1} & \text{if $0\leq j-1  =m < p$,} \\
h_md_{j-1}  & \text{if $0\leq m  < j-1 \leq p$,} \\
g & \text{if $j-1=m=p$,}
\end{cases}
\eqnlabel{add-9}
\]
where $f(\tau)=\overline{\overline{\tau}}(\text{-}, 1)$ and $g(\tau)=\overline{\overline{\tau}}(\text{-}, 0)$; see \cite{Barr} for details.
The construction of the canonical lift yields that $d_jh_m(\tau) = \overline{\overline{\tau}}\circ \alpha_m\circ \eta_j$ and 
$h_md_j(\tau) = \overline{\overline{\tau}}\circ (\eta_j\times 1)\circ \alpha_m$. Therefore, each equality mentioned above follows from the the relations between $\alpha_m$ and $\eta_m$. 

It remains to verify that $\overline{\overline{\tau}}(\text{-}, 0)$ is homotopic to the identity. Let $\tau$ be an element  in $(\underline{P_\sigma^h})_p$. We write  
$\tau (z) = (\pi\circ \tau(z), \zeta(z), \ell(z))$ and $\overline{\overline{\tau}}(z, 0) = (\overline{\overline{\tau}}(z, 0), C_y, \ell'(z))$, where $z \in \Delta^n$ and 
$C_y$ denotes the constant path at $y = \overline{\pi\circ \tau}(z, 0)$. Observe that $\overline{\pi\circ \tau}(z, 0) = (\pi\circ \tau)(z)$ and $\ell'$ is nothing but the path $(\omega^{-1}) \ast \ell \ast \omega$ in (5.3). Thus by using homotopies from $C_y$ to $\omega(z)$, 
from $(\omega^{-1}) \ast \ell \ast \omega (z, \text{-})$ to $\ell(z)$ and from the cut-off function $\rho$ to the identity, we can construct a homotopy 
$H_\tau : \Delta_{\text{sub}}^p \times I \to P_\sigma^h$ from $\overline{\overline{\tau}}(\text{-}, 0)$  to $\tau$ with $H_{d_j\tau} = H_\tau \circ (\eta_j \times 1)$. Define an element $h_m(\tau)$ in 
$(\underline{P_\sigma^h})_{p+1}$ by $h_m(\tau) = H_\tau\circ \alpha_m$. By a direct calculation, we see that $h_m$ satisfies the equalities in (5.4), where 
$f(\tau) = \tau$ and $g(\tau) = \overline{\overline{\tau}}(\text{-}, 0)$.

As for the composite 
$\lambda \circ j : S^D_\bullet (P_{\sigma(n)}) \to \underline{P_\sigma^h} \to S^D_\bullet(P_{\sigma(n)})$, it is also homotopic to the identity. 
The same homotopy as $H_\tau$ mentioned above gives such a chain homotopy. 
\end{proof}

%Remark 5.8
\begin{rem}
We define a map $\varphi : M^I \to \mathsf{stPath}_\varepsilon(M)$ by composing a cut-off function $\rho$ with 
$\rho(0)=0$ and $\rho(1) =1$, where the target denotes the stationary path space; see \cite[5.4]{IZ}. Then the map $\varphi$ is smooth.  This follows from the smoothness of the evaluation map. Moreover, it follows from the locality of plots that the concatenation 
$\mathsf{stPath}_\varepsilon(M) \times \mathsf{stPath}_\varepsilon(M) \to \mathsf{stPath}_\varepsilon(M)$ is also smooth. 
By using these facts, we have proved Lemma \ref{lem:KEY}.  
\end{rem}

\begin{lem} \label{lem:KEY2} 
The map  $S^D(\iota) : \underline{P_\sigma} \to \underline{P_\sigma^h}$ defined by the inclusion 
$\iota : P_\sigma \to P_\sigma^h$ 
induces a quasi-isomorphism $\iota^* : A^*_{DR}(\underline{P_\sigma^h})  \to A^*_{DR}(\underline{P_\sigma})$. 
\end{lem}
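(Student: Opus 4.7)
The plan is to adapt the strategy of Lemma \ref{lem:KEY}. By Theorem \ref{thm:main} it suffices to exhibit a chain-homotopy inverse to the map ${\mathbb Z}\underline{P_\sigma} \to {\mathbb Z}\underline{P_\sigma^h}$ induced by $\iota$ on normalized chain complexes; the desired quasi-isomorphism $\iota^*$ on $A^*_{DR}$ will then follow from the homotopy-commutative diagram of Theorem \ref{thm:main} together with the naturality of all its constituents.

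\medskip
\noindent
\textbf{A simplicial retraction $r : \underline{P_\sigma^h} \to \underline{P_\sigma}$.} Given $\tau \in (\underline{P_\sigma^h})_p$, write $\tau(z) = (u(z), \zeta(z), \gamma(z))$ where $u = \pi\circ\tau : \Delta^p \to \Delta^n$ is affine, $\zeta(z)$ is a smooth path in $M$ from $\sigma(u(z))$ to $\nu(\gamma(z))$, and $\gamma(z) \in P_MN$ satisfies $(\gamma(z)(0),\gamma(z)(1)) = f(\zeta(z)(1))$. Writing $f\circ\zeta(z) = (\eta_0(z),\eta_1(z))$, the concatenation
\[
\tilde{\gamma}(z) := \eta_0(z)\,\ast\,\gamma(z)\,\ast\,\overline{\eta_1(z)},
\]
reparametrized by the cut-off function $\rho$ so as to be stationary at each splicing instant, is a smooth path in $N$ from $f(\sigma(u(z)))_0$ to $f(\sigma(u(z)))_1$. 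Setting $r(\tau)(z) := (u(z), \tilde{\gamma}(z))$ then gives an element of $(\underline{P_\sigma})_p$, and the assignment is visibly simplicial since the affine coordinate $u$ is untouched.

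\medskip
\noindent
\textbf{Simplicial homotopies.} One next constructs a smooth homotopy $H_\tau : \Delta^p_{\text{sub}} \times I \to P_\sigma^h$ from $\iota\circ r(\tau)$ to $\tau$ by continuously shrinking $\zeta(z)$ toward the constant path at $\sigma(u(z))$ while correspondingly absorbing the excised pieces of $f\circ\zeta(z)$ into the $N$-path $\gamma(z)$, all reparametrized via $\rho$. Following the template of the proof of Lemma \ref{lem:KEY}, we set $h_m(\tau) := H_\tau\circ\alpha_m$ using the standard subdivision maps $\alpha_m : \Delta^{p+1}_{\text{sub}} \to \Delta^p_{\text{sub}}\times I$; the family $\{h_m\}_{0\leq m\leq p}$ then satisfies the simplicial-homotopy identities displayed in the proof of Lemma \ref{lem:KEY} and realises $\iota\circ r \simeq \mathrm{id}_{\underline{P_\sigma^h}}$. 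An entirely analogous (and simpler) homotopy, reparametrising the trivial concatenation $\tilde{\gamma}$ back to $\gamma$ through the same cut-off mechanism, shows that $r\circ\iota \simeq \mathrm{id}_{\underline{P_\sigma}}$.

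\medskip
\noindent
\textbf{Main obstacle.} The principal technical burden is establishing diffeological smoothness of the concatenations and of the shrinking homotopy, particularly at the splicing instants. This is handled exactly as in Lemma \ref{lem:KEY}: every reparametrisation is built from the cut-off function $\rho$ so that each participating path becomes locally constant near every gluing point, and then the plot-criterion together with the locality axiom of a diffeology forces the resulting maps into $S^D_\bullet(\text{-})_{\text{sub}}$. The affineness condition on $\pi\circ\tau$ required for membership in $\underline{P_\sigma^h}$ is preserved throughout, since the $\Delta^n$-coordinate is never perturbed.
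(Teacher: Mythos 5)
Your proposal is correct and follows essentially the same route as the paper: the paper likewise defines a retraction $\mu(a,\omega,\gamma)=(a,\widetilde{\omega^{-1}\ast\gamma\ast\omega})$ (your $\tilde\gamma$, i.e.\ conjugation of $\gamma$ by the image of the connecting path under $f$, reparametrized by the cut-off function $\rho$), builds smooth homotopies from the identity to $\iota\circ\mu$ and to $\mu\circ\iota$ preserving the $\Delta^n$-coordinate, and converts them into simplicial homotopies via the maps $\alpha_m$ exactly as in Lemma \ref{lem:KEY}. Your write-up is if anything slightly more explicit about the meaning of the concatenation and about the smoothness at the splicing instants.
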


\begin{proof}
The inclusion $\iota$ is given by $\iota(a, \gamma) = (a, C_{\sigma(a), \gamma})$.  We define a map 
$\mu :  P_\sigma^h \to P_\sigma$ by $\mu(a, \omega, \gamma) 
= (a, \widetilde{\omega^{-1}\ast \gamma \ast \omega})$, where 
$\widetilde{\omega^{-1}\ast \gamma \ast \omega} = 
(\omega^{-1}\circ \rho)\ast (\gamma\circ \rho)\ast (\omega\circ \rho)$. By adjusting the parameters of paths $\omega$ and $\gamma$,  
we can construct smooth homotopies $H : P_\sigma^h \times I \to P_\sigma^h$ from $1$ to $\iota\circ \mu$ and 
$G : P_\sigma \times I \to P_\sigma$ from $1$ to $\mu \circ \iota$ which preserve the first factor. For an $n$-simplex $\tau : \Delta^n \to P_\sigma^h$, define 
$h_m(\tau)$ by the composite $H\circ(\tau\times 1)\circ \alpha_m : \Delta^{n+1} \to \Delta^n \times I \to P_\sigma^h \times I \to P_\sigma^h$. Then the same argument as in the proof of Lemma \ref{lem:KEY} yields that the family 
$\{ \{h_m\}_{0\leq m\leq n} \}_{n\geq 0}$ gives a simplicial homotopy on $\{S^D_\bullet(\underline{P_\sigma^h)}\}$. By using the homotopy $G$, we have a simplicial homotopy on $\{S^D_\bullet(\underline{P_\sigma)}\}$. This completes the proof. 
\end{proof}

Lemmas \ref{lem:KEY} and \ref{lem:KEY2} imply that the map $\text{top}_n : [0] \to [n]$ defined by $\text{top}_n(0) =(n)$ 
induces a quasi-isomorphism 
$\text{top}_n^* : A^*_{DR}(\underline{P_\sigma})  \to A^*_{DR}(\underline{P_{\sigma(n)}})$. 
%we define a local system over the simplicial set $K:=S^D_\bullet(M)_{\text{sub}}$ of differential coefficients. 
The map $\alpha(\eta) : \Delta_{\text{sub}}^{m} \to \Delta_{\text{sub}}^{n}$ induced by a nondecreasing map $\eta : [m] \to [n]$ fits into the commutative diagram 
\[
\xymatrix@C30pt@R18pt{
P_{\alpha(\eta)^*\sigma}  \ar[r]^-{\xi_{\alpha(\eta)}} \ar[d]_{\pi_{{\alpha(\eta)}^*\sigma}} &  P_\sigma \ar[d]^{\pi_\sigma} \ar[r]^{\xi_\sigma}& 
       E_f \ar[d]^{\nu} \\
 \Delta_{\text{sub}}^m \ar[r]_-{\alpha(\eta)} & \Delta_{\text{sub}}^n \ar[r]_\sigma & M. 
}
\]
Moreover, the smooth map $\xi_{\alpha(\eta)}$ gives rise to 
a simplicial map $\underline{\xi_{\alpha(\eta)}} :  \underline{P_{{\alpha(\eta)}^*\sigma}} \to \underline{P_\sigma}$. 
For the rest of this section, let $K$ denote the simplicial set ${S^D_\bullet(M)}_{\text{\em sub}}$. 
By using the quasi-isomorphisms $\text{top}_n^*$ mentioned above, we have 

%Lemma 5.10
\begin{lem}\label{lem:DiffCoeff} 
The family  $F=\{F_\sigma\}_{\sigma \in K}:=\{ A^*_{DR}(\underline{P_\sigma}) \}_{\sigma \in K}$ of cochain algebras gives an extendable local system over $K$ of differential coefficients.  
\end{lem}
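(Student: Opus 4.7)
The plan is to verify two properties: that $F$ defines a contravariant functor from the category $K$ to $\mathsf{DGAs}$ whose structure morphisms are quasi-isomorphisms, and that the resulting local system is extendable in the sense of Halperin. Functoriality is immediate from the construction: any morphism $\alpha(\eta):\tau\to\sigma$ in $K$ (so $\tau=\sigma\circ\alpha(\eta)$) gives, by the universal property of pullbacks, a smooth map $\xi_{\alpha(\eta)}:P_\tau\to P_\sigma$ and hence a simplicial map $\underline{\xi_{\alpha(\eta)}}:\underline{P_\tau}\to\underline{P_\sigma}$; setting $F(\alpha(\eta)):=A^*_{DR}(\underline{\xi_{\alpha(\eta)}})$ produces the required contravariant functor, and functoriality follows from associativity of pullbacks.

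For the quasi-isomorphism condition, I plan to exploit the natural quasi-isomorphism $\mathrm{top}_n^*:A^*_{DR}(\underline{P_\sigma})\xrightarrow{\simeq}A^*_{DR}(P_{\sigma(n)})$ obtained by combining (the inverse of) Lemma \ref{lem:KEY} with Lemma \ref{lem:KEY2}, as noted just before the statement. For a morphism $\alpha(\eta):[m]\to[n]$, using that $\tau(m)=\sigma(\eta(m))$, form the commutative square
\[
\xymatrix@C25pt@R16pt{
A^*_{DR}(\underline{P_\sigma}) \ar[r]^-{F(\alpha(\eta))} \ar[d]_-{v_{\eta(m)}^*} &
A^*_{DR}(\underline{P_\tau}) \ar[d]^-{\mathrm{top}_m^*} \\
A^*_{DR}(P_{\sigma(\eta(m))}) \ar@{=}[r] & A^*_{DR}(P_{\tau(m)}),
}
\]
where the right vertical is a quasi-isomorphism by construction. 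For the left vertical I observe that the proofs of Lemmas \ref{lem:KEY} and \ref{lem:KEY2} go through essentially verbatim after replacing the top vertex $v_n$ by the vertex $v_{\eta(m)}$: the maps $\alpha_m$ triangulating $\Delta^p\times I$ can be redefined to contract toward $v_{\eta(m)}$, and neither the canonical-lift construction nor the concatenation homotopies privilege any particular vertex. A two-out-of-three argument then forces $F(\alpha(\eta))$ to be a quasi-isomorphism.

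For extendability, note that $P_{d_i\sigma}=\pi_\sigma^{-1}(d_i(\Delta_\mathrm{sub}^{n-1}))$, so $\underline{P_{d_i\sigma}}\hookrightarrow\underline{P_\sigma}$ is a sub-simplicial set inclusion; consequently $L:=\bigcup_{i\in\mathcal{I}}\underline{P_{d_i\sigma}}$ is a sub-simplicial set of $\underline{P_\sigma}$. A compatible family $(\Phi_i)_{i\in\mathcal{I}}$ is precisely a single simplicial morphism from $L$ to $(A^*_{DR})_\bullet$. Since the simplicial DGA $(A^*_{DR})_\bullet$ is itself extendable in the sense of Definition \ref{defn:extendableOb} — by the same proof as Lemma \ref{lem:extendability}, as noted in the paragraph after Corollary \ref{cor:RHT} — a standard induction on the non-degenerate simplices of $\underline{P_\sigma}\setminus L$ extends this morphism to all of $\underline{P_\sigma}$, producing the required $\Phi\in F_\sigma$ with $F(d_i)\Phi=\Phi_i$ for $i\in\mathcal{I}$.

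The main obstacle is the claim that the arguments of Lemmas \ref{lem:KEY} and \ref{lem:KEY2} adapt from the top vertex to an arbitrary vertex. I expect this to be purely notational — the geometric input (star-shapedness of $\Delta^n$ around any vertex, smooth concatenations of paths based anywhere) is manifestly symmetric in the choice of basepoint vertex, and the combinatorial formulas for the triangulation maps $\alpha_m$ and for the reparametrisation by the cut-off function merely need to be relabeled accordingly — but writing out the relabeling carefully is the only non-mechanical part of the proof.
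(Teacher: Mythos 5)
Your proposal is correct in outline and shares the paper's skeleton — functoriality of $\sigma\mapsto A^*_{DR}(\underline{P_\sigma})$, a two-out-of-three reduction to vertex restrictions for the quasi-isomorphism condition, and extendability of the local system deduced from extendability of the simplicial DGA $(A^*_{DR})_\bullet$ via surjectivity of restriction to sub-simplicial sets — but it diverges genuinely at the one non-trivial step, namely the treatment of a morphism $\eta:[m]\to[n]$ with $\eta(m)<n$. You propose to re-prove Lemmas \ref{lem:KEY} and \ref{lem:KEY2} with the top vertex $v_n$ replaced by the arbitrary vertex $v_{\eta(m)}$, so that the left-hand vertical in your square is a quasi-isomorphism directly. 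The paper instead keeps those lemmas as black boxes (used only for the top vertex) and makes a purely combinatorial reduction: it introduces an auxiliary nondecreasing map $\tau:[n]\to[n+1]$ collapsing $\{\eta(m),\dots,n\}$ to the new top vertex, so that both $\tau\circ\mathrm{top}_n$ and $\tau\circ\eta\circ\mathrm{top}_m$ equal $\mathrm{top}_{n+1}$, and then applies two-out-of-three twice, following the pattern of Halperin's Lemma 19.17. Your route is geometrically more transparent (the simplex contracts to any of its vertices, and Lemma \ref{lem:KEY2} never refers to a vertex at all), but calling the adaptation ``purely notational'' is a little optimistic: the piecewise-affine contraction $\overline{u}\alpha_m(\sum\lambda_iv_i)=\sum_{i\le m}\lambda_iv_{u(i)}+(\sum_{i>m}\lambda_i)v_n$ is written specifically for the last vertex, and you must re-verify that with $v_n$ replaced by $v_{\eta(m)}$ the pieces still glue along the prism faces, that the identities (5.4) still hold with the same indexing, and that the canonical lift now lands in $P_{\sigma(\eta(m))}$ at time $1$ — all of which does go through, since the target vertex enters the formulas only as a label, but it is a page of checking rather than a relabeling. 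What the paper's route buys is exactly the avoidance of that re-derivation, at the cost of a terse combinatorial argument; what yours buys is independence from that trick and a statement (vertex restrictions at every vertex are quasi-isomorphisms) that is useful in its own right. Your extendability argument — gluing the compatible family $(\Phi_i)_{i\in\mathcal{I}}$ into a single morphism on $L=\bigcup_{i\in\mathcal{I}}\underline{P_{d_i\sigma}}$, using that $\underline{P_{d_i\sigma}}\cap\underline{P_{d_j\sigma}}=\underline{P_{d_{j-1}d_i\sigma}}$, and then extending over the nondegenerate simplices of $\underline{P_\sigma}$ by extendability of $(A^*_{DR})_\bullet$ — is exactly the paper's argument, just written out.
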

\begin{proof}
For a nondecreasing map $\eta : [m] \to [n]$, we define a map $\tau : [n] \to [n+1]$ which sends $\eta(m), \eta(m)+1, ..., n$ to $n+1$. 
Since $\tau \circ \text{top}_n = \text{top}_{n+1}$ and $\tau\circ \eta \circ \text{top}_m = \text{top}_{n+1}$, it follows that  
$\underline{\xi_{\alpha(\eta)}}$ induces a quasi-isomorphism 
$(\underline{\xi_{\alpha(\eta)}})^* :  A_{DR}(\underline{P_\sigma}) \to A_{DR}(\underline{P_{{\alpha(\eta)}^*\sigma}})$. The extendability of the local system follows from Lemma \ref{lem:extendability} and the proof of \cite[19.17 Lemma]{H}. 
\end{proof}

%For an element $\sigma \in S^D_n(X)_{\text{sub}}$ and a affine map $\alpha : \Delta^p \to \Delta^n$, we have a {\it strictly} commutative diagram
\noindent
%$\bullet$ Apply \cite[13.12 Theorem and 19.21 Lemma]{H} to our case.  
%$\bullet$ Relative Sullivan algebras and the comparison theorem (\cite[17.17 Theorem]{H}) for spectral sequences (\cite[(12.43)]{H}).   
Let $j : R:=A_{DR}(M)\otimes \wedge V \to A_{DR}(E_f)$ be a KS extension for the map $\nu^* :  A_{DR}(M) \to A_{DR}(E_f)$ induced by the projection $\nu : E_f \to M$. The reader is referred to \cite[Chapter 1]{H} for the definition of a KS extension and its fundamental properties. 
Let $P_m$  denote the fibre  over a point $m \in M$. Since the composite of $\nu$ and the inclusion $l : P_m \to E_f$ is the constant map at $m$, 
it follows that the map $l^*\circ \pi^*$ factors through 
the augmentation $\e : A_{DR}(M) \to A_{DR}(\{m\})={\mathbb R}$ and then $j$ induces a morphism 
$k : \wedge V = A_{DR}(\{m\})\otimes_{A_{DR}(M)}R \to A_{DR}(P_m)$ of cochain algebras. 

%Proposition 5.11
\begin{prop}\label{prop:KSextension} Suppose that $N$ is simply connected. Then 
the morphism $k : \wedge V  \to A_{DR}(P_m)$ of cochain algebras is a quasi-isomorphism. 
\end{prop}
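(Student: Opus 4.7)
The plan is to adapt to the diffeological setting the classical argument of Halperin \cite[Theorem 20.3]{H}, using the Leray-Serre spectral sequence from Theorem \ref{thm:LSSS} (case (ii)) and the extendable local system $\mathcal{H}^*(F)$ of Lemma \ref{lem:DiffCoeff}. First, I would equip $R = A_{DR}(M) \otimes \wedge V$ with the base filtration $F^p R := A_{DR}^{\geq p}(M) \otimes \wedge V$ and take the associated spectral sequence, which converges to $H^*(R)$ with $E_2$-term $H^*(A_{DR}(M)) \otimes H^*(\wedge V, \bar d)$, where $\bar d$ is the quotient differential on $\wedge V = \mathbb{R} \otimes_{A_{DR}(M)} R$. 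In parallel, Theorem \ref{thm:LSSS} applied to $\nu : E_f \to M$ produces a Leray-Serre spectral sequence converging to $H^*(A_{DR}^*(E_f))$ with $E_2^{p,q} = H^p(M; \mathcal{H}^q(F))$, and by Lemmas \ref{lem:KEY} and \ref{lem:KEY2} each stalk of $\mathcal{H}^*(F)$ at a vertex $m$ is canonically $H^*(A_{DR}(P_m))$.

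The crucial geometric input is to show that, because $N$ is simply connected, $\mathcal{H}^*(F)$ is a trivial local system. For a $1$-simplex $\sigma : \Delta^1_{\text{sub}} \to M$, the transition isomorphism on $\mathcal{H}^*(F)$ is realized by the two quasi-isomorphisms $\mathrm{top}_i^* : A_{DR}(\underline{P_\sigma}) \to A_{DR}(P_{\sigma(i)})$ for $i = 0, 1$, and the induced identification of $H^*(A_{DR}(P_{\sigma(0)}))$ with $H^*(A_{DR}(P_{\sigma(1)}))$ is governed by the monodromy of the path fibration $(\e_0, \e_1) : N^I \to N \times N$ along $f \circ \sigma$. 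Since $N \times N$ is simply connected, this monodromy is smoothly homotopic to the identity, so the transition map is the identity on cohomology. Consequently $E_2^{p,q} \cong H^p(A_{DR}(M)) \otimes H^q(A_{DR}(P_m))$. The morphism $j$ is filtration-preserving and a quasi-isomorphism by hypothesis, so it induces a morphism of spectral sequences whose restriction to $E_2$ is $\mathrm{id} \otimes k^*$. Because $j$ is an isomorphism on abutments and both spectral sequences are first-quadrant with $E_2$ a tensor product over $H^*(A_{DR}(M))$, Zeeman's comparison theorem forces $k^* : H^*(\wedge V) \to H^*(A_{DR}(P_m))$ to be an isomorphism.

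The main obstacle I expect is making the triviality of $\mathcal{H}^*(F)$ rigorous in the diffeological framework. One does not have direct access to a $\pi_1(M)$-action on fibre cohomology; instead one must work simplicially, and show that each transition cochain map $(\underline{\xi_{\alpha(\eta)}})^*$ of Lemma \ref{lem:DiffCoeff} is cohomologous to a canonical identification. Converting simple connectedness of $N$ into such a smooth homotopy requires a canonical-lift-type construction in the spirit of the proof of Lemma \ref{lem:KEY}, together with a coherent choice of smooth paths in $N$ realizing the chosen trivializations. Once the local system is trivialized coherently enough to identify $E_2$-terms, the spectral-sequence comparison is routine.
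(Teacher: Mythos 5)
Your proposal follows essentially the same route as the paper: compare the base-filtration spectral sequence of the KS extension with the Leray--Serre spectral sequence of $\nu$ built from the local system $F$, use simple connectivity of $N$ (via the path fibration over $N\times N$) to conclude that the coefficient system is simple, and apply the comparison theorem to the fibre edge map, which is identified with $H(k)$ up to isomorphisms. The one point the paper handles more explicitly is that $A_{DR}(E_f)$ carries no base filtration, so $j$ is first transported, via Halperin's isomorphism $a : A_{DR}(E_f) \to \Gamma(F)$ and a lifted morphism $\Psi$ into $\Gamma((A_{DR}^*)_\bullet\otimes F)$, to a genuinely filtration-preserving map before the two spectral sequences are compared.
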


This result follows from \cite[20.3 Theorem]{H}. We prove Proposition \ref{prop:KSextension} by constructing the Leray--Serre spetral sequence 
and by applying the comparison theorem of spectral sequences. 

To this end, we first recall an isomorphism $a : A_{DR}(E_f) \to \Gamma(F)$ of cochain algebras in \cite[19.21 Lemma]{H} 
defined by $(a\psi)_\sigma = a_\sigma \psi$, where $a_\sigma$  denotes the composite 
\[
A_{DR}(E_f) \stackrel{\xi_\sigma}{\to} A_{DR}(P_\sigma) \to A_{DR}(\underline{P_\sigma}).
\]  
For the map $P_m \to \{m\}$, Lemma \ref{lem:DiffCoeff} enables us to obtain a local system over 
$K':=\{{S_\bullet^D(\{m\})}_{\text{sub}}\}$ of the form 
$F':= \{A_{DR}(\underline{(P_m)_\tau}) \}_{\tau \in K'}$. Observe that the inclusion $i : P_m \to E_f$ induces a morphism $i^* : F\to F'$ of local systems. 
Moreover, we have an isomorphism $a :  A_{DR}(P_m) \to \Gamma(F')$
by applying \cite[19.21 Lemma]{H}. 
Recall the quasi-isomorphism $i_F : \Gamma(F) \to \Gamma((A_{DR}^*)_\bullet\otimes F)$ 
which is defined by the inclusion $F_\sigma \to 1\otimes F_\sigma \subset 
A_{DR}({\mathbb A}^n)\otimes F_\sigma$ for $\sigma \in K_n$; see \cite[12.36, 13.5 and 13.12 Theorem]{H}. 
Here we regard $(A_{DR}^*)_\bullet$ as a local system over $K$ defined by 
$(A_{DR}^*)_{\bullet, \sigma} = (A_{DR}^*)({\mathbb A}^n)$ for $\sigma \in K_n$. 
Moreover, the map $\xi_F : \Gamma(A\otimes F) \to \Gamma(F)$ is defined by 
\[
%$
(\xi_F(a\otimes \Phi))_\sigma = a_\sigma\cdot\Phi_\sigma, 
%$
\]
where 
$\cdot$ denotes the multiplication on $(A_{DR}^*)_\bullet$. It is readily seen that $\xi_F$ is a left inverse of $i_F$ and so 
it is a surjective quasi-isomorphism. We observe that $\xi_F$ is a morphism of $A_{DR}(M)$-algebras. 
These maps gives a commutative diagram 
\[
\xymatrix@C38pt@R18pt{
A_{DR}(\{m\}) ={\mathbb R} \ar[d] \ar[dr]  \ar@/^0.9pc/[drr] \ar@/^1.4pc/[drrr] & & & \\
\wedge V  \ar[r]^-k  & A_{DR}(P_m) \ar[r]^-{a}_-{\cong} & \Gamma(F')  \ar@/_1.0pc/[r]_{i_{F'}}& %\ar@<1ex>[r]^-{i_{F'}}_-{\simeq} &  
\Gamma((A_{DR}^*)_\bullet\otimes F')\ar@{->>}@<0ex>[l]_-{\xi_{F'}}^-{\simeq}\\ 
A_{DR}(M)\otimes \wedge V \ar[r]_-{\simeq}^-j \ar[u]^q & A_{DR}(E_f) \ar[u]^-{l^*} \ar[r]_-{a}^-{\cong} & 
  \Gamma(F)  % \ar@<1ex>[r]^-{i_F}_-{\simeq} 
  \ar[u]_{\Gamma(i^*)} & \Gamma((A_{DR}^*)_\bullet\otimes F). \ar@{->>}@<0ex>[l]^-{\xi_F}_-{\simeq} 
  \ar[u]_{\Gamma(1\otimes i^*)}\\
A_{DR}(M) \ar@/^3.5pc/[uuu] \ar[u] \ar[ur]^-{\nu^*} \ar@/_0.9pc/[urr] \ar@/_1.4pc/[urrr]& & & 
}
\]
containing the KS extension $j$, where the maps $i_{F'}$ and $\xi_{F'}$ are defined in the same way as $i_F$ and $\xi_F$, respectively. In particular, we have 
$\xi_{F'}\circ i_{F'} = id$. 
The Lifting lemma gives rise to a morphism $\Psi : A_{DR}(M)\otimes \wedge V \to \Gamma((A_{DR}^*)_\bullet\otimes F)$ of $A_{DR}(M)$-algebras 
with $\xi_F\circ \Psi = j \circ a$. More precisely, we define $\Psi$ by $\Psi(v) = i_F \circ j \circ a (v)$ for $v  \in V$. 
The commutativity of the three squares enables us to deduce that 
\[
a \circ k\circ q|_{1\otimes \wedge V} = \xi_{F'}\circ \Gamma(1\otimes i^*) \circ \Psi|_{1\otimes \wedge V}.
\eqnlabel{add-10}
\]
%It follows form the commutative diagram above that the map $\Psi$ induces a morphism 
%$\Psi' : \wedge V = {\mathbb R}\otimes_{A_{DR}(M)}R \to \Gamma((A_{DR}^*)_\bullet\otimes F')$ with $\xi_{F'}\circ \Psi' =a\circ k$. Then we see that 
%$\Psi' = i_{F'} \circ a\circ k$ on the cohomology. 

Define filtrations $G=\{G^p\}_{p\geq 0}$ of  $A_{DR}(M)\otimes \wedge V$ and $'G=\{'G^p\}_{p\geq 0}$ by 
$G^p = \sum_{i\geq p}A^i_{DR}(M) \otimes \wedge V$ and $'G^p = \Gamma (\sum_{i\geq p}(A^i_{DR})_\bullet \otimes F)$, respectively.  
Since the morphism $\Psi$ of cochain algebras over $A_{DR}(M)$ preserves the filtrations, it follows that the map 
induces a morphism 
$\{f_r\}_{r\geq 2} : \{E_r^{*,*}, d_r\} \to \{'E_r^{*,*}, 'd_r\}$ of spectral sequences constructed from the filtrations mentioned above; see \cite[(12.43)]{H}. 
Recall the integration map defined in (3.1); it follows from the proof of \cite[14.18 Theorem]{H} that integration induces a quasi-isomorphism 
\[
\int : \ \! \! 'E_1= \Gamma((A^*_{DR})_\bullet \otimes H(F)) \to  C^*(M; \mathcal{H}(L)),  
\]
where  $C^*(M; \mathcal{H}(L))$ denotes the cochain complex of $S^D_\bullet(M)_{\text{sub}}$ 
with the local coefficients induced by the local system $F$ and $H(F)$ is the local system of coefficients defined by 
$H(F)_\sigma := H(F_\sigma, d)$ for $\sigma \in K$.  
An important point of the proof is that $(A_{DR}^*)_\bullet$ and $(C_{PL})_\bullet$ are extendable; 
see \cite[12.37 Theorem]{H} and Section 3. 
  
%Remark 5.12
\begin{rem} By the above argument, the Leray--Serre spectral sequence in Theorem \ref{thm:LSSS} gives an isomorphism 
$_{LS}E_2 \cong H^*(S^D_\bullet(M)_{\text{sub}})\otimes H^*(A_{DR}(L))$ as an algebra 
if $M$ is simply connected and the cohomology $H^*(S^D_\bullet(M)_{\text{sub}})$ is of finite type. 
\end{rem}

\begin{proof}[Proof of Proposition \ref{prop:KSextension}] Since $N$ is simply connected, it follows that the local system $F$ on ${S^D_\bullet(M)}_{\text{sub}}$ 
in Lemma \ref{lem:DiffCoeff} is simple. We observe that the action of $\pi_1(M)$ on $F$ is induced by that of $\pi_1(N)$. Then we see that $f_2$ is a morphism of free $H^*(M)$-modules.  Therefore if $f_2^{0,q}$ is an isomorphism for some $q$, then so is $f_2^{p,q}$ for any $p\geq 0$. 
It follows from the comparison theorem (\cite[17.17 Theorem]{H}) that $f_r^{p,q}$ is an isomorphism for any $r \geq 2$ and 
$p$, $q \geq 0$. The formula (5.5) implies that the isomorphism $f_2^{0, q}$ is nothing but the map 
\[
H(k) =  H(\int \circ i_{F'}\circ a\circ k) : H(\wedge V) \to H(F')=H(P_m). 
\]
This completes the proof. 
\end{proof}
%\medskip
%\noindent
%$\bullet$ Relative Sullivan algebras and spectral sequences (\cite[(12.43)]{H}). 

We are ready to prove the main theorem of this section. 
\begin{proof}[Proof of Theorem \ref{thm:general_main}]
For a diffeological space $X$, we recall the quasi-isomorphism  $(j^*)^* : A^*_{DR}(X) \to A^*_{DR}(S^D_\bullet(X))=:A(X)$ in (5.2). 
Let $\Omega N \to PN\to N$ be the pullback of the evaluation map $(\e_0, \e_1) : N^I \to N\times N$ along the induction $s : N \to N\times N$ defined by 
$s(x)= (*, x)$, where $*$ denotes the base point of $N$.  
We have a commutative diagram of solid arrows 
\[
\xymatrix@C25pt@R3pt{
             & A^*_{DR}(\Omega N) \ar[rr]^{(j^*)^*}_-\simeq & & A(\Omega N) & \overline{B}(A) \ar[l]_{\alpha \circ  \mathsf{It}} 
             \ar@{..>}@/^8pt/[lld]_{\overline{\alpha}} \\
T \ar[ur]^-\kappa \ar@{.>}[rr]^(0.6){\overline{\beta}}& & T' \ar[ru]^-{\kappa'}& \\
             & A^*_{DR}(PN) \ar[uu]|{\hole} \ar[rr]|{\hole}^(0.4)\simeq & & A(PN) \ar[uu]_(0.3){A(i)} & \Omega^*(N)\otimes \overline{B}(A)
              \ar[l]_-{\alpha \circ  \mathsf{It}} \ar[uu] 
             \ar[uu] \ar@{..>}@/^8pt/[lld]_{\widetilde{\alpha}} \\
R \ar[uu] \ar[ur]^-\simeq \ar@{.>}[rr]^(0.6)\beta & & R'\ar[uu] \ar@{->>}[ru]_p^-\simeq & \\
             & A^*_{DR}(N) \ar[uu]|{\hole}_(0.4){\pi^*} \ar[rr]|{\hole}^(0.4)\simeq  & & A(N) \ar[uu]_(0.3){A(\pi)} & \Omega^*(N) \ar[l]_{\alpha} \ar[uu] \ar@/^8pt/[lld]_\alpha\\
A^*_{DR}(N)  \ar[uu]_(0.6)j \ar@{=}[ur] \ar[rr] & & A(N) \ar@{=}[ur] \ar[uu]_(0.7){j'} &  
}
\]
in which $j$ and $j'$ are KS extensions of $\pi^*$ and $A(\pi)$, respectively.  
Here $A$ denotes the cochain subalgebra of $\Omega^*(N)$ described in the paragraph before (5.1). 

We may assume that the quasi-isomorphism $p$ is a surjection by the surjective trick; see \cite[Section 12 (b)]{F-H-T}. 
By applying the Lifting lemma, we have a morphism $\beta : R \to R'$ which makes the two squares commutative.  
Then we have a morphism $\overline{\beta} : T:={\mathbb R}\otimes_{A_{DR}(N)}R \to T':={\mathbb R}\otimes_{A(N)}R'$ of cochain algebras. 
Moreover, the map $\beta$ is a quasi-isomorphism and hence so is $\overline{\beta}$ by \cite[Theorem 6.10]{F-H-T}. Proposition \ref{prop:KSextension} implies that $\kappa$ is a quasi-isomorphism and then so is $\kappa'$. Since the bar complex $\Omega^*(N)\otimes \overline{B}(A)$ is 
a semifree $\Omega(N)$-module, 
it follows from the Lifting lemma that there exist a morphism $\widetilde{\alpha} :  \Omega^*(N)\otimes \overline{B}(A) \to R'$ of $\Omega(N)$-modules and a morphism $\overline{\alpha} : \overline{B}(A) \to T'$ of differential graded modules which fit in the commutative diagram. 
Observe that $\overline{B}(A)\cong {\mathbb R}\otimes_{\Omega^*(N)}(\Omega^*(N)\otimes \overline{B}(A))$. 
The complex $\Omega^*(N)\otimes \overline{B}(A)$ is indeed a resolution of ${\mathbb R}$ and the diffeological space $PN$ is smoothly contractible. Then the map $\widetilde{\alpha}$ is a quasi-isomorphism. Since the factor map is a quasi-isomorphism by assumption, it follows from the comparison theorem \cite[7.1 Theorem]{H} that so is $\overline{\alpha}$. 
We see that $\alpha \circ  \mathsf{It} : \overline{B}(A) \to A(\Omega N)$ is a quasi-isomorphism. 

We apply the same argument to the pullback $\Omega N \to E_f \to M$ of the evaluation map $(\e_0, \e_1) : N^I \to N\times N$ 
along an induction $f: M \to N\times N$. 
Then in the diagram above, the bar complex $\Omega^*(N)\otimes \overline{B}(A)$ is also replaced with 
the complex $\Omega^*(M)\otimes_f \overline{B}(A)$.
The same argument as above with the comparison theorem enables us to conclude that $\alpha \circ  \mathsf{It} : \Omega^*(M)\otimes_f B(A) \to A(E_f)$ is 
a quasi-isomorphism. 
\end{proof} 

We conclude this section with a table which summarizes the simplicial objects used in this manuscript.  

The pair of a simplicial set in the first row and a simplicial cochain algebra in the first column gives a cochain algebra. 
These  cochain algebras are quasi-isomorphic to one another. In fact, the quasi-isomorphisms are induced by the inclusion 
$S^\infty_\bullet(X) \to S^D_\bullet(X)$, the restrictions $j^* : S^D(X)_\bullet \to  S^\infty_\bullet(X)$ and 
$j^* (A_{DR}^*)_\bullet \to \widetilde{(A_{DR}^*)}_\bullet$. More precisely, the results for the pairs in the second row  follow from Lemmas 
\ref{lem:acyclic}, \ref{lem:representable} and 
the commutativity of the same right square as in the diagram (4.2) in the proof of Proposition \ref{prop:alpha_beta}. 
We have the result for the pair in each column by considering the commutativity of the same right triangle as in (4.2).

%\begin{center}
%\vspace*{-0.15cm}
\begin{table}[h]
{\small
  \begin{tabular}{|l|c|c|c|} \hline
                              & $S^D_\bullet(X)$ & ${S^D_\bullet(X)_{\text{sub}}}$ & $S^\infty_\bullet(X)$  \\ \hline
                                      &   This pair is used in                     &  We use this pair when      &    This is used in                 \\  
$(A_{DR}^*)_\bullet$     &  proving  the     &  constructing  the SSes       &  describing     \\   
                                       &   de Rham theorem   & in Theorems \ref{thm:LSSS} and \ref{thm:EMSS}    &  Proposition  \ref{prop:alpha_beta}  \\  \hline
\vspace*{-0.12cm}                           &  This case (1) is       &  This case (2) is      &  We use this pair  \\  
 $\widetilde{(A_{DR}^*)}_\bullet$  &   not used      &  not used    & when constructing $\alpha'$  \\  
                                       &  in our framework         &  in our framework     &  in Proposition  \ref{prop:alpha_beta} \\ \hline
     \end{tabular}
     }
%\vspace*{0.15cm}        
     \caption{}
     \label{table1}
\end{table}
%\end{center}

\vspace*{-0.5cm}        
%\noindent

In \cite{Kihara1}, Kihara has introduced {\it standard simplices} $\Delta^p_{\text{Ki}}$ for $p\geq 0$ in $\mathsf{Diff}$ whose underlying topological spaces are the standard ones in the category of topological spaces. With the simplices, it is proved that $\mathsf{Diff}$ admits a Quillen model category structure; see \cite[Theorem 1.3]{Kihara1}. For a diffeological space $X$, we can consider the complex associated with the singular simplex 
$S^D_p(X)_{\text{Ki}}$ consisting of smooth maps $\Delta^p_{\text{Ki}} \to X$, which is quasi-isomorphic to 
the complex for $S^D_\bullet(X)_{\text{sub}}$; see \cite[Remark 3.8]{Kihara}. Then the pairs 
(3)$:=(S^D_\bullet(X)_{\text{Ki}}, (A_{DR}^*)_\bullet)$ and 
(4)$:=(S^D_\bullet(X)_{\text{Ki}}, \widetilde{(A_{DR}^*)}_\bullet)$ give rise to the cochain algebras 
which are quasi-isomorphic to the cochain algebra for the pair (2). 
While it is possible to choose the pairs (1), (2), (3) and (4) when considering the cohomology algebras,  
we do not use them explicitly in this manuscript. 
%We anticipate that such a pair is relevant in the study of diffeological spaces. 

%%%%%%%%%%%%%%%%%%%%%%%%%%%%%%%%%%%%

\medskip
\noindent
{\it Acknowledgements.}
The author thanks Akinori Emoto and Norio Iwase for many valuable discussions on the simplicial cochain algebras 
concerning the de Rham theory for diffeological spaces.  
The author is also grateful to Hiroshi Kihara for his comments on Theorem \ref{thm:main}, which will lead the author to consider the problem when the factor map  
$\alpha$ in the theorem induces an isomorphism on the cohomology with a model structure on 
the category $\mathsf{Diff}$.  The author would also like to thank Dan Christensen for letting him realize the necessity of the smoothness of the CW-complexes in Theorem \ref{thm:main}. 

A part of this manuscript  has been written during his stay at Nesin Mathematics Village in which 
Diffeology, Categories and Toposes and Non-commutative Geometry Summer School has been held in the summer 2018. 
The author thanks Serap G\"urer, who was the organizer of the school, for her hospitality. 
Furthermore, the author greatly appreciates many constructive suggestions of the referee and John McCleary 
to improve this manuscript.
This work was partially supported by JSPS KAKENHI Grant Number 19H05495.

%Section 6
\section{Appendix}
\subsection{Appendix A: The acyclic model theorem for cochain complexes}\label{app0}

We recall the acyclic model theorem due to Bousfield and Gugenheim \cite{B-G}.  
\begin{defn}
Let $\C$ be a category and $\text{Ch}^*(\K)$ the category of cochain complexes over a field $\K$. A contravariant functor $F : \C \to \text{Ch}^*(\K)$ 
admits a {\it unit} if for each object $X$ in $\C$, there exists a morphism $\eta_X : \K \to F(X)$ in $\text{Ch}^*(\K)$.  
Let $\M$ be a set of objects in $\C$, which is called {\it models}. 
A functor $F$ with unit is {\it acyclic on models} $\M$ if for any $M$ in $\M$, there exists a morphism $\e_M : F(M) \to \K$ such that 
$\e_M \circ \eta_M \simeq id$ and $\eta_M\circ \e_M \simeq id$. 
\end{defn}

Let $F : \C \to \K\text{-Mod}$ be a functor from a category with models $\M$ 
to the category of vector spaces over $\K$. Then we define a contravariant functor $\widehat{F} :  \C \to \K\text{-Mod}$ by 
\[
\widehat{F}(X) :=\prod_{M \in \M, \sigma \in \C(M, X)}(F(M)\times \{\sigma \}),
\]
where for a morphism $f : X \to Y$ in $\C$, the morphism $\widehat{F}(f) :  \widehat{F}(Y) \to \widehat{F}(X)$ is defined by 
$\widehat{F}(f)\{m_\sigma, \sigma\} =\{m_{f\tau}, \tau\}$. 
Moreover, we define a natural transformation $\Phi : F \to \widehat{F}$ by $\Phi_X(u) =\{F(x)u, x\}$. 
We say that $F$ is {\it corepresentable} on the models $\M$ if there exists a natural transformation 
$\Psi : \widehat{F} \to F$ such that $\Psi\circ \Phi = id_F$. 

\begin{thm}\label{thm:AcyclicModels} \cite[2.4 Proposition]{B-G}
Let $\C$ be a category with models $\M$. 
Let $K_1$ and $K_2$ be contravariant functors from $\C$ to $\text{\em Ch}^*(\K)$ with units 
$\eta : \K \to K_1^0, K_2^0$. Here $\K$ denotes the constant functor defined by $\K(X)= \K$.  
Suppose that $K_1$ is acyclic on models $\M$ and $U_k\circ K_2$ is corepresentable on the models for any $k$, where $U_k$ denotes the forgetful functor to 
$\K\text{\em -Mod}$ on the degree $k$. Then there exists a natural transformation $T : K_1 \to K_2$ which preserves the unit. 
Moreover any two such natural transformations are naturally homotopic. 
\end{thm}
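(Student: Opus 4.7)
Plan: This is the cochain / contravariant analogue of the classical Eilenberg--Mac Lane acyclic model theorem; here acyclicity of $K_1$ and corepresentability of $K_2$ play the roles dual to ``free on models'' and ``acyclic on models'' in the chain version. My plan is to construct $T$ by induction on cohomological degree, using acyclicity of $K_1$ to define the map on individual models and then using corepresentability of $K_2$ to globalize.

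First, for each $M \in \mathcal{M}$ I would pick a unit-preserving chain map $T_M : K_1(M) \to K_2(M)$. A quick choice is $T_M^0 := \eta_M^{K_2}\circ \epsilon_M^{K_1}$ and $T_M^k := 0$ for $k \ge 1$; this is a chain map because $\eta_M^{K_2}$ takes values in the cocycles of $K_2(M)$, and it satisfies $T_M \circ \eta_M^{K_1} \simeq \eta_M^{K_2}$ because $\epsilon_M^{K_1}\circ \eta_M^{K_1} \simeq \mathrm{id}_{\K}$. Then, using the retraction $\Psi^k : \widehat{U_k K_2} \to U_k K_2$ supplied by corepresentability, I would assemble these into a natural transformation by setting
\[
T_X^k(x) := \Psi_X^k\!\bigl(\{T_M^k(K_1(\sigma)x)\}_{M,\,\sigma \in \mathcal{C}(M,X)}\bigr)
\]
for $x \in K_1^k(X)$. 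Naturality of $T$ is then a direct check from naturality of $\Psi^k$ combined with the explicit formula $\widehat{U_k K_2}(f)\{m_\sigma,\sigma\}=\{m_{f\tau},\tau\}$, and unit-preservation propagates from the model case.

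The delicate step is verifying that $T$ is a chain map. Note that $\Phi^k : U_k K_2 \to \widehat{U_k K_2}$ intertwines $d$ with the componentwise differential on $\widehat{U_k K_2}$, so $\Psi^k$ is automatically compatible with $d$ on the image of $\Phi^k$; the trouble is that in general the family $\{T_M^k(K_1(\sigma)x)\}$ need not lie in that image. I would handle this by carrying out the construction inductively on cohomological degree: given $T^{0},\ldots,T^{k-1}$ with $dT^{j} = T^{j+1}d$ for $j<k-1$, on each model $M$ the map $T_M^k$ is forced on $\mathrm{im}(d\colon K_1^{k-1}(M)\to K_1^{k}(M))$ by $T_M^k \circ d = d\circ T_M^{k-1}$; consistency there uses the inductive hypothesis together with acyclicity of $K_1(M)$ (every cocycle in positive degree is a coboundary, and the unique degree-$0$ obstruction is killed by unit-preservation). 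Any $\K$-linear extension to all of $K_1^k(M)$ will do, after which $\Psi^k$ globalizes the choice. Uniqueness up to natural homotopy is proved by the same inductive scheme applied to $T-T'$: on each model, $T_M - T_M'$ kills the generator of $H^0(K_1(M))$, so by acyclicity it factors as $dh_M + h_M d$, and $h$ is globalized via corepresentability exactly as above.

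The main obstacle is the chain-map verification: the naturality of $\Psi^k$ alone does not imply compatibility with $d$, and one has to interlock the two hypotheses --- using acyclicity at each degree to correct the model-level choices so that the chain-map identity survives the passage through $\Psi^k$. This is the technical heart of every acyclic model argument.
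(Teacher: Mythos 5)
Your proposal is correct and follows essentially the same route as the source the paper cites for this statement (the paper offers no proof of its own beyond the reference to \cite[2.4 Proposition]{B-G}): the degree-by-degree induction, in which the model-level maps $t_M^k$ are adjusted using acyclicity of $K_1(M)$ so that $t_M^k\circ d = d\circ T_M^{k-1}$ on $\mathrm{im}\,d$ (well-defined because $d\,T_M^{k-1}$ kills cocycles, by $H^{>0}(K_1(M))=0$ and unit-preservation in degree $0$) and then globalized through $\Psi^k$, is exactly the standard mechanism. The one step worth writing out in full is the check that the globalized $T^k$ still satisfies $T^k\circ d = d\circ T^{k-1}$ on an arbitrary object $X$: by naturality of $T^{k-1}$ the family $\{d\,T_M^{k-1}(K_1(\sigma)u)\}_{M,\sigma}$ coincides with $\Phi_X^k\bigl(d\,T_X^{k-1}(u)\bigr)$, so the retraction identity $\Psi^k\circ\Phi^k=\mathrm{id}$ returns $d\,T_X^{k-1}(u)$, and the same trick gives unit-preservation and the homotopy in the uniqueness part.
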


We here consider III) in Section \ref{sub4.1} more precisely.  
In Theorem \ref{thm:AcyclicModels}, we take the category $\mathsf{Diff}$ as $\C$ and then put $K_1=\Omega^*(\text{-})$ and 
$K_2= C^*(S_\bullet^D( \text{-} ))$. Let $\M$ be the subset of objects in $\C$ consisting of the affine spaces ${\mathbb A}^n$ for any $n\geq0$. Then the category 
$\mathsf{Diff}$ is regarded as a category with models $\M$. The Poincar\'e lemma for diffeological spaces implies that the functor $\Omega^*(\text{-})$ is acyclic for $\M$; 
see \cite[6.83]{IZ}. 

For a non-negative integer $k\geq 0$, 
we define a map 
\[
\Psi_X : \widehat{C^k(S_\bullet^D(X))} 
:= \prod_{{\mathbb A}^n \in \M, \sigma \in C^{\infty}({\mathbb A}^n, X))} (C^k(S_\bullet^D({\mathbb A}^n)\times \{\sigma\})\to C^k(S_\bullet^D(X))
\]
by $\Psi_X(\{m_\sigma, \sigma\})(\tau) = m_\tau(id_{{\mathbb A}^k})$, where $\tau \in S_k^D(X)$. Then $\Psi_{\text{--}}$ is a natural transformation. In fact, we see that 
for a smooth map $f : X \to Y$ and $u \in S_k^D(X)$, 
\[
\Psi_X(\widehat{C^k(S_k^D(f))} \{m_\sigma, \sigma\})(u) = \Psi_X\{m_{f\tau}, \tau\}(u) = m_{fu}(id_{{\mathbb A}^k}) \ \ \text{and}
\]
\[
((C^kS_\bullet^D)(f))(\Psi_Y\{m_\sigma, \sigma\})(u) = \Psi_Y\{m_\sigma, \sigma\}(fu) = m_{fu}(id_{{\mathbb A}^k}). 
\]
Since $\Phi_X(u)= \{C^k(S_\bullet^D(\sigma))u, \sigma\} $
for $u \in C^k(S_\bullet^D(X))$ by definition, 
it follows that 
\begin{align*}
(\Psi_X\Phi_X(u))(\tau) &= \Psi_X(\{C^k(S_\bullet^D(\sigma))u, \sigma\})(\tau) = C^k(S_\bullet^D(\tau))u(id_{{\mathbb A}^k}) \\
&= u(\tau \circ id_{{\mathbb A}^k})=u(\tau)
\end{align*}
for $\tau \in S_k^D(X)$. Then we have $\Psi\Phi = id$ and hence $C^k(S_\bullet^D( \text{-} ))$ is corepresentable. 
Theorem \ref{thm:AcyclicModels} enables us to deduce 
the homotopy commutativity of the right square in Theorem \ref{thm:main}.

\subsection{Appendix B}\label{AppB}
We begin with the definition of a smooth CW complex in the sense of Iwase and Izumida.

\begin{defn}\label{defn:CW}\cite[Appendix]{I-I}
A {\it smooth CW complex} $\{K, \{K^{(n)}\}_{n\geq -1}\}$ is a diffeological space built up from $K^{-1}=\varnothing$ by inductively attaching $n$-balls $\{B_j^n\}_{j\in J_n}$ by smooth maps from their boundary spheres $\{S_j^{n-1}\}_{j\in J_n}$ to the $n-1$ skeleton $K^{(n-1)}$ to obtain 
the $n$ skeleton $K^{(n)}$. Here the diffeology of balls and boundary spheres is given by their manifold structures, and $K=\text{colim}\  K^{(n)}$. 
A smooth CW complex $\{K, \{K^{(n)}\}_{n\geq -1}\}$ is {\it finite dimensional} if the set of skeletons is finite.
\end{defn}

To define a stratifold,  
we recall a differential space in the sense of Sikorski \cite{Sik}. 
\begin{defn} \label{defn:differential_space}
A {\it  differential space} is a pair $(S, \C)$ consisting of a topological space $S$ and an $\mathbb{R}$-subalgebra $\C$
of the $\mathbb{R}$-algebra $C^0(S)$ of continuous real-valued functions on $S$, which is assumed to be {\it locally detectable} and 
$C^\infty$-{\it closed}.  

\medskip
Local detectability means that $f \in \C$ if and only if 
for any $x \in S$, there exist an open neighborhood $U$ of $x$ and an element $g \in \C$ such that 
$f|_U = g|_U$.  

\medskip
$C^\infty$-closedness means that for each $n\geq 1$, each $n$-tuple $(f_1, ..., f_n)$ of maps in $\C$ and each smooth map 
$g : \mathbb{R}^n \to \mathbb{R}$, the composite  
$h : S \to \mathbb{R}$ defined by $h(x) = g(f_1(x), ...., f_n(x))$ belongs to $\C$.
\end{defn}

Let $(S, \C)$ be a differential space and $x \in S$. The vector space consisting of derivations on the $\mathbb{R}$-algebra 
$\C_x$ of the germs at $x$ is denoted by $T_xS$, which is called the {\it tangent space} 
of the differential space  at $x$; see \cite[Chapter 1, section 3]{Kreck}. 
%In what follows, we assume that the tangent space $T_xS$ is of finite dimension for each $x \in S$. 

\begin{defn} \label{defn:stratifold} 
A {\it stratifold} is a differential space $(S, \C)$ such that the following four conditions hold: 

\begin{enumerate}
\item
$S$ is a locally compact Hausdorff space with countable basis; 
\item
the {\it skeleta} $sk_k(S):= \{x \in S \mid \text{dim } \!T_xS\leq k\}$ are closed in $S$;
\item
for each $x \in S$ and open neighborhood $U$ of $x$ in $S$, 
there exists a {\it bump function} at $x$ subordinate to $U$; that is, a non-negative function $\rho \in \C$ such that 
$\rho(x)\neq 0$ and such that the support $\text{supp }\!\rho :=\overline{\{p \in S \mid \rho(p) \neq 0\}}$ is contained in $U$; 
\item
the {\it strata} $S^k := sk_k(S) - sk_{k-1}(S)$ are $k$-dimensional smooth manifolds such that restriction along 
$i : S^k \hookrightarrow S$ induces an isomorphism of stalks 
$
i^* : \C_x \stackrel{\cong}{\to} C^\infty(S^k)_x
$
for each $x \in S^k$.
\end{enumerate}
\end{defn}

By definition, a continuous map $f : (S, \C) \to (S', \C')$ is a morphism of stratifolds if $\phi \circ f \in \C$ 
for any $\phi \in \C'$. We denote by $\mathsf{Stfd}$ the category of stratifolds. 

A parametrized stratifold is constructed from a manifold attaching other finite 
manifolds with boundaries. More precisely, let $(S, \C)$ be 
a stratifold of dimension $n$ and 
$W$ a $k$-dimensional manifold with compact boundary $\partial W$ endowed with a collar 
$c : \partial W \times [0,  \varepsilon) \to W$. Suppose that $k > n$. Let $f : \partial W \to S$ be a morphism of stratifolds. 
%for some neighbourhood $V$ of the boundary $V$ and $\varepsilon > 0$.  
Let 
$S'$ be the identification space $S'=S\cup_f W$ and we define a subalgebra $C'$ of  $C^0(S')$ by 
\[
%$
C'=\Set{ g : S' \to \R  | g_{| S} \in \C, \text{$gc(w, t)=gf(w)$ for $w \in \partial W$} }. 
%$
\]
Then, the pair $(S',  \C')$ is a stratifold; see \cite[Example 9]{Kreck}. 
%A stratifold constructed inductively by attaching manifolds with such a way is called 
%a parametrized stratifolds.  
We call a stratifold a {\it parametrized} stratifold if it is built up applying the construction above inductively to a finite sequence 
$W_0$, $(W_1, f_1)$, ..., $(W_n, f_n)$  of $i$-dimensional manifolds $W_i$ with compact boundaries equipped with collars and morphisms $f_i$ form $\partial W_i$ to the stratifold constructed inductively by 
$W_0$, $(W_1, f_1)$, ..., $(W_{i-1}, f_{i-1})$.

Let $\mathsf{Diff}$ be the category of diffeological spaces. There is a functor  
%\[
$k :  \mathsf{Stfd} \to \mathsf{Diff}$
%\]
defined by 
$k(S, \C) = (S, \D_\C)$ and $k(f) = f$ for a morphism $f : S \to S'$ of stratifolds, where 
\[
\D_\C:=\Set{u : U \to S | 
\begin{array}{l}
\text{$U :$ open in ${\R^q}, q \geq 0$,} \\
\text{$\phi\circ u \in C^\infty(U)$ for any  $\phi \in \C$} 
\end{array} }. 
\]
We observe that a plot in $\D_\C$ is a set map and that the functor $k$ is faithful; see \cite[Section 5]{A-K} for the details. 

%Remark 6.6
\begin{rem}\label{rem:stratifolds}
The category $\mathsf{Diff}$ has the large class of objects, namely diffeological spaces. Then in order to consider whether properties of manifolds are inherited 
to  more general objects in $\mathsf{Diff}$, it is important to restrict the category to a particular subcategory containing $\mathsf{Mfd}$.
The wide subcategory of $\mathsf{Diff}$ defined by the image of the functor $k$ and {\it admissible maps} is a candidate for such a subcategory; 
see \cite[Section 5]{A-K}. Indeed, a stratifold admits a partition of unity and the Serre-Swan theorem holds in $\mathsf{Stfd}$ and then in the wide subcategory. 
We expect to consider tangent spaces and tangent bundles in the subcategory by applying the results in \cite{Hector, C-W_tangentSp}.  
\end{rem}

%Lemma 6.7 
\begin{lem}\label{lem:D-top} Let $(S, \C)$ be a stratifold. 
An open set of the underlying topological space $S$ is a $D$-open set of the diffeological space $k(S, \C)$.
\end{lem}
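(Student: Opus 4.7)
The plan is to show that every plot $p : U \to S$ in $\D_{\C}$ is automatically continuous with respect to the underlying topology of $S$. Once this is established, for any topologically open $A \subseteq S$ the preimage $p^{-1}(A)$ is open in $U$ for every plot $p \in \D_{\C}$, which is exactly the defining condition for $A$ to be $D$-open in $k(S, \C)$.

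To verify continuity of a plot $p : U \to S$ at a point $x_0 \in U$, I would use the bump function condition (3) in the definition of a stratifold. Let $A$ be an open neighborhood of $p(x_0)$ in the topology of $S$. Choose a non-negative $\rho \in \C$ with $\rho(p(x_0)) \neq 0$ and $\mathrm{supp}\,\rho \subseteq A$. Since $p \in \D_{\C}$, the composite $\rho \circ p : U \to \R$ is smooth, in particular continuous. Because $(\rho \circ p)(x_0) \neq 0$, there is an open neighborhood $V \subseteq U$ of $x_0$ on which $\rho \circ p$ does not vanish. For every $x \in V$ one has $p(x) \in \{y \in S \mid \rho(y) \neq 0\} \subseteq \mathrm{supp}\,\rho \subseteq A$, so $V \subseteq p^{-1}(A)$. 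This shows $p^{-1}(A)$ is open in $U$, hence $A$ is $D$-open.

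There is no serious obstacle here; the argument is a direct exploitation of the presence of bump functions in $\C$. The only subtlety worth emphasizing is that the bump function is required to live in $\C$ itself (so that it actually pulls back to a smooth, and therefore continuous, function along an arbitrary plot), and that $\mathrm{supp}\,\rho$ is defined as the closure of the non-vanishing set, which is what lets us conclude $\{\rho \neq 0\} \subseteq A$. I would state the lemma at this level only; the converse inclusion (that every $D$-open set is topologically open) would require producing, around each point, enough smooth plots into $S$ to detect the topology, which is genuinely more delicate and is not needed for the Mayer--Vietoris argument invoked in the proof of Theorem \ref{thm:main}.
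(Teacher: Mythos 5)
Your proposal is correct, and it reaches the same key intermediate fact as the paper --- that every plot $p \in \D_{\C}$ is continuous for the underlying topology of $S$ --- but it establishes that fact by a genuinely different and more self-contained argument. The paper first observes that a plot $u : U \to k(S,\C)$ is a smooth map of diffeological spaces, then invokes \cite[Proposition 5.1]{A-K} (since $U$ is a manifold) to conclude that $u$ is in fact a morphism of stratifolds, and continuity follows because stratifold morphisms are continuous. You instead prove continuity directly from condition (3) of Definition \ref{defn:stratifold}: given a topological neighborhood $A$ of $p(x_0)$, a bump function $\rho \in \C$ at $p(x_0)$ subordinate to $A$ pulls back along $p$ to a smooth (hence continuous) function on $U$ that is nonzero at $x_0$, and the inclusion $\{\rho \neq 0\} \subseteq \mathrm{supp}\,\rho \subseteq A$ yields an open neighborhood of $x_0$ inside $p^{-1}(A)$. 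Your route avoids the external citation entirely and uses only the definition of $\D_{\C}$ and the existence of bump functions, at the cost of a slightly longer local argument; the paper's route is shorter on the page but imports the full strength of the comparison between diffeological smoothness and stratifold morphisms. Your closing remark that the converse implication (every $D$-open set is topologically open) is a separate and more delicate matter, not needed here, is also accurate.
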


\begin{proof}
Let $u$ be an element in $\D_\C$ with domain $U$. Then $u : U \to k(S, \C)$ is a smooth map in the sense of diffeology. 
In fact, for any plot $p : V \to U$ 
of the diffeology $U$ and  for any $\phi \in \C$, we see that $\phi \circ u_*(p) = (\phi \circ u) \circ p$ is in $C^\infty(V)$ 
and hence $u_*(p)$ is in $\D_\C$. Since $U$ is a manifold, it follows from \cite[Proposition 5.1]{A-K} that $u$ is a morphism in 
$\mathsf{Stfd}$. In particular, the plot $u$ is continuous. It turns out that, by definition, each open set of $S$ is $D$-open. 
%This implies that for an open set $W$ of $S$, the inverse image 
%$v^{-1}(W)$ is open in $U$ for any plot $v : U \to S$. 
\end{proof}

We here summarize categories and functors related to our work.  
\[
\xymatrix@C50pt@R12pt{
& & \mathsf{Sets}^{\Delta^{op}}   \ar@<1ex>[d]^-{| \ |_D}&  \\
\mathsf{Mfd} \ar[r]_{\text{\tiny fully faithful}}^j \ar@/^2.0pc/[rr]^{\ell : \text{fully faithful}} \ar[rd]_{{\mathcal Y}}& \mathsf{Stfd} \ar[r]^-k &\mathsf{Diff} 
\ar@<1ex>[r]^-{D}_-{\bot} \ar@<1.5ex>[u]^-{S^D}_{\vdash}   \ar@<1.6ex>[dl]^-{\int} % \ar[u]_{S_\Box} 
& \mathsf{Top}, \ar@<1.2ex>[l]^-{C}   \\
&\mathsf{Stacks}_{\mathsf{Mfd}}  \ar@<0.5ex>[ur]^(0.4){Coarse}_(0.5){\dashv}& C\text{-}\mathsf{Diff} \ar@<1ex>[r]^-{D}_-{\simeq} \ar[u] &  \Delta\text{-}\mathsf{Top}\ar@<1ex>[l]^-{C}\ar[u]
%& & 
}
\eqnlabel{add-10}
\]
The category $\mathsf{Mfd}$ also embeds into the category of differentiable stacks $\mathsf{Stacks}_{\mathsf{Mfd}}$ with the Yoneda embedding ${\mathcal Y}$. Moreover, the category $\mathsf{Stacks}_{\mathsf{Mfd}}$ connects with 
$\mathsf{Diff}$ using the Grothendieck construction functor $\int$ and the coarse moduli space functor {\it Coarse}, which are adjoints to each other; see \cite{W-W} for more details.  

The $D$-topology of diffeological spaces gives rise to the functor $D: \mathsf{Diff} \to  \mathsf{Top}$. For a topological space $X$, all continuous maps make 
the diffeology $\D^X$ of the underlying set $X$. Thus we have the functor $C$ mentioned in the diagram above; 
see Remark \ref{rem:Delta-generated_Top} for an important property of the adjoint pair. 
The realization of a simplicial set in $\mathsf{Diff}$ with 
affine spaces ${\mathbb A}^n$ for $n\geq 0$ gives the realization functor $|\ |_D$. 
The results \cite[Propositions 4.14 and 4.15]{C-W} assert that the functors 
$S^D \circ C$ and $D \circ | \ |_D$ coincide with the usual singular simplex functor and the realization functor up to weak equivalence, respectively.  
We refer the reader to \cite{S-Y-H} and \cite{C-W} for more properties of the adjoint pairs $(D, C)$ and $(| \ |_D, S^D)$. 

%Remark 6.8
\begin{rem}\label{rem:Delta-generated_Top}
The functors $C$ and $D$ give rise to an equivalence between appropriate full subcategories of $\mathsf{Diff}$ and 
$\mathsf{Top}$. To describe this more precisely, we recall that the unit and counit induce isomorphisms 
$\eta_{CX} : CX \stackrel{\cong}{\to} CDCX$ and 
$\e_{DY} : DCDY \stackrel{\cong}{\to} DY$; see \cite[Propositios 3.3]{C-S-W} and also \cite{S-Y-H}. 
Let $C\text{-}\mathsf{Diff}$ be 
the full subcategory of $\mathsf{Diff}$ consisting of objects isomorphic to diffeological spaces in the image of $C$ and 
$\Delta\text{-}\mathsf{Top}$ the full subcategory of $\mathsf{Diff}$ consisting 
of objects isomorphic to topological spaces in the image of $D$. We observe that the objects in the image of $D$ are exactly the $\Delta$-generated topological spaces; see \cite[Proposition 3.10]{C-S-W}. 
The result \cite[Lemma II. 6.4]{M-M} implies that the functors are restricted to equivalences between 
$C\text{-}\mathsf{Diff}$ and $\Delta\text{-}\mathsf{Top}$. It is worth mentioning that all CW-complexes are included in $\Delta\text{-}\mathsf{Top}$; 
see \cite[Corollary 3.4]{S-Y-H}. 
\end{rem}

%Remark 6.9
\begin{rem}\label{rem:CDC}
Let $X$ be in the category $C\text{-}\mathsf{Diff}$. Since the unit $\eta_X : X \to CDX$ is an isomorphism, it follows that 
the map $\mathsf{Diff}(\Delta_\text{sub}^n, X) \to \mathsf{Diff}(\Delta_\text{sub}^n, CDX)$ induced by the unit is bijective 
and hence so is the composite $\mathsf{Diff}(\Delta_\text{sub}^n, X) \to \mathsf{Top}(\Delta^n, DX)$ of the maps in (4.1). 
%This yields that the functor $C_n D$ is representable for each $n$, 
%where $C_*$ denotes the singular chain functor. 
Thus %by the method of acyclic models, we see that 
the composite gives rise to an isomorphism 
$H(C^*({S^D_\bullet(X)}_{\text{sub}})) \cong H^*(DX, {\mathbb R})$ 
for each object $X$ in $C\text{-}\mathsf{Diff}$, where $H^*(\text{-}, {\mathbb R})$ denotes the singular cohomology with coefficients in ${\mathbb R}$.  In particular, we have an isomorphism 
$H(C^*({S^D_\bullet(CZ)}_{\text{sub}})) \cong H^*(Z, {\mathbb R})$  for a CW-complex $Z$.
\end{rem}

% Remark 6.10
\begin{rem}
Let $(S, \C)$ be a stratifold. Then it follows from \cite[Corollary 5.2]{A-K} that 
$S^D_\bullet(k(S, \C)) \cong \mathsf{Stfd}({\mathbb A}^\bullet, (S, \C))$ as a simplicial set. 
Corollary \ref{cor:main} implies that 
that the de Rham cohomology of $k(S, \C)$ is isomorphic to the cohomology of the chain complex induced by 
$\mathsf{Stfd}({\mathbb A}^\bullet, (S, \C))$ as an algebra. 
\end{rem}

%Subsection 6.3 
\subsection{Appendix C}\label{AppC} 
Let $(X, \D^X)$ be a diffeological space. 
We recall the factor map $\alpha : \Omega^*(X) \to A^*(X):=A_{DR}(S^D_\bullet (X))$ defined in Section \ref{sect2}. 
In this section, we prove the following proposition. 

\begin{prop}\label{injectivity_of_H(a)} For each diffeological space $X$, the map $H^1(\alpha) : H^1(\Omega^*(X)) \to H^1(A^*(X))$ induced by 
the factor map $\alpha$ is injective. 
\end{prop}

\noindent
Moreover, we shall introduce an obstruction for the map $H^*(\alpha)$ to be injective. 

\begin{proof}[Proof of Proposition \ref{injectivity_of_H(a)}] 
We consider the map $C_1(S^D_\bullet(X)) \to C_{\text{cube}, 1}(X)$ induced by the inverse of the projection $pr_1 : {\mathbb A}^1 \to {\mathbb R}^1$ on the first factor, which is a diffeomorphism. Then the map gives a chain map $\widetilde{m} : C_*(S^D_\bullet(X)) \to C_{\text{cube}, *}(X)$ for $* \leq 1$. Moreover, the method of acyclic models extends $\widetilde{m}$ to a chain map $\widetilde{m} =\{ \widetilde{m}_i\}_{i\geq 0}$ defined on all of the degrees. We write $m$ for the dual to the chain map $\widetilde{m}$. Then by the same argument as in Section \ref{sub4.1} III)  with Theorem  \ref{thm:AcyclicModels}, we have a homotopy commutative diagram 
  \[
 \xymatrix@C25pt@R15pt{
 A^*(X) \ar[d]_{\int'}& \Omega^*(X) \ar[l]_-{\alpha} \ar[d]^{\int^{IZ}} \\
 C^*(S^D_\bullet(X)) & \ar[l]_-{m} C_{\text{cube}}^*(X),
 }
 \eqnlabel{add-11}
 \]
 where $\int'$ denotes the composite of the integration map $\int$ defined in Section \ref{sect3.2} and the isomorphism $\nu : C^*_{PL}(S^D_\bullet(X)) \stackrel{\cong}{\to} 
 C^*(S^D_\bullet(X))$ mentioned in Section \ref{sect2}. 
 Moreover, the definition of $\widetilde{m}_1$ enables us to obtain a commutative diagram 
 \[
 \xymatrix@C20pt@R12pt{
 H_1(S^D_\bullet(X)) \ar[rr]^{H(\widetilde{m}_1)} & & H_1(C_{\text{cube}, *}(X)) \\
 & \pi_1(X) \ar[ur]_{\Theta} \ar[ul]^{\Theta'}, 
 }
 \]
 where $\Theta$ and $\Theta'$ denote the Hurewicz maps. Composing 
 the diagram (6.2) on the first cohomology with the triangle obtained by dualizing the diagram above, we have a commutative diagram 
 \[
 \xymatrix@C20pt@R12pt{
 H^1(A^*(X))  \ar[rd]_-{\int''} & & H^1(\Omega^*(X)) \ar[ll]_{H^1(\alpha)} \ar[ld]^-{\widetilde{\int^{IZ}}}\\
 & \text{Hom}(\pi_1(X), {\mathbb R})  
 }
 \]
in which $\widetilde{\int^{IZ}}$ is nothing but the first de Rham homomorphism defined in \cite[6. 74]{IZ}. It follows from \cite[6.89]{IZ} that the de Rham homomorphism is a 
monomorphism in general. The commutativity yields the result. 
\end{proof}

The \v{C}ech--de Rham  spectral sequence defined 
in \cite{IZ_Cech, IZ_2019} for a diffeological space $X$ contains the de Rham cohomology $H^*(\Omega^*(X))$ of Souriau  in the edge of the $E_2$-term. 
After briefly describing the spectral sequence, we shall deduce that the injectivity of the edge homomorphism gives an obstruction for the map $H^*(\alpha)$ to be injective. 

Let $(X, \D^X)$ be a diffeological space and ${\mathcal G}$ a generating family of $\D^X$ in the sense of \cite[1.65]{IZ}. 
We may assume that the domain of each plot in ${\mathcal G}$ is a ball in ${\mathbb R}^N$ for some $N$. 
Then we define the {\it nebula} ${\mathcal N}_X$ of $X$ associated with ${\mathcal G}$ by 
\[
{\mathcal N}_X := \coprod_{\varphi \in {\mathcal G}} \big(\{\varphi\} \times \text{dom}(\varphi)\big)
\]
with sum diffeology, where $\text{dom}(\varphi)$ denotes the domain of the plot $\varphi$. 
It is readily seen that the evaluation map $ev : {\mathcal N}_X \to X$ defined by $ev(\varphi, r)=\varphi(r)$ is smooth. 
The {\it gauge monoid} $\mathsf{M}$ is a submonoid of the monoid of endomorphisms on the nebula ${\mathcal N}_X$ defined by 
\[
\mathsf{M} :=\{f \in C^\infty({\mathcal N}_X, {\mathcal N}_X) \mid ev\circ f = ev \ \text{and} \ 
\sharp \ \! \text{Supp} \ \!  f < \infty \},
\] 
where $\text{Supp} \  \!f := \{ \varphi \in {\mathcal G} \mid 
f|_{\{\varphi \} \times \text{dom}(\varphi)} \neq 1_{\{\varphi \} \times \text{dom}(\varphi)} \}$. 
Then the original de Rham complex $\Omega^*({\mathcal N}_X)$ is a left $\K\mathsf{M}^{\text{op}}$-module whose actions are defined by $f^*$ induced by an endomorphism $f \in {\mathcal N}_X$.  
Moreover, the complex $\Omega^*({\mathcal N}_X)$ is regarded as a two sided $\K\mathsf{M}^{\text{op}}$-module for which the right module structure is trivial. 
Then we have the Hochschild complex $C^{*,*}=\{C^{p,q}, \delta, d_\Omega\}_{p,q \geq 0}$ with 
\[
C^{p,q} = %C^p(\mathsf{M}, \Omega^q({\mathcal N}))=
\text{Hom}_{\K\mathsf{M}^{\text{op}}\otimes\K\mathsf{M}}
(\K\mathsf{M}^{\text{op}}\otimes (\K\mathsf{M}^{\text{op}})^{\otimes p}\otimes\K\mathsf{M}, \Omega^q({\mathcal N}_X))\cong 
\text{map}(\mathsf{M}^p, \Omega^q({\mathcal N}_X)),
\]
where the horizontal map $\delta$ is the Hochshcild differential and the vertical map $d_\Omega$ is 
induced by the de Rham differential on $\Omega^*({\mathcal N}_X)$; see \cite[Subsection 12]{IZ_2019}. 
The total complex $\text{Tot} \ C^{*,*}$ has the horizontal filtration $F^* = \{F^j\}_{j\geq 0}$ defined by $F^j = \oplus_{q\geq j}C^{*,q}$. Then the filtration gives rise to a first quadrant spectral sequence $\{{}^d\!E_r^{*, *}, d_r\}$ converging to $H^*(\text{Tot} \ C^{*,*})$ with 
\[
E_2^{p,q} \cong H^q( HH^p(\K\mathsf{M}^{\text{op}}, \Omega^*({\mathcal N}_X)), d_\Omega), 
\]
which is called the \v{C}ech--de Rham  spectral sequence. 

The result \cite[Proposition in Subsection 9]{IZ_2019} implies that for a diffeological space $X$, 
the evaluation map induces an isomorphism $ev^* : \Omega^*(X) \stackrel{\cong}{\to} \Omega^*({\mathcal N}_X)^\mathsf{M}$ of cochain algebras, 
where $\Omega^*({\mathcal N}_X)^\mathsf{M}$ denotes the invariant subcomplex of $\Omega^*({\mathcal N}_X)$. 
Since the kernel of the map $\delta : C^{0, q} \to C^{1, q}$ is nothing but the complex $\Omega^*({\mathcal N}_X)^\mathsf{M}$, it follows that 
the edge homomorphism $$edge_1:= H(ev^*) : H^*(\Omega^*(X)) \stackrel{\cong}{\longrightarrow} {}^d\!E_2^{0, *}$$ is an isomorphism. 

We consider the vertical filtration of the total complex $\text{Tot} \ C^{*,*}$  and the spectral sequence $\{{}^\delta\!E_r^{*,*}, d_r\}$ 
associated with the filtration. Then the Poincar\'e lemma for the original de Rham complex yields that ${}^\delta\!E_r^{p,q}=0$ for $q >0$ and hence 
the target of $\{{}^\delta\!E_r^{*,*}, d_r\}$ is the Hochschild cohomology (the \v{C}ech cohomology)
$\check{H}(X)=HH^*(\K\mathsf{M}, \text{map}({\mathcal G}, \K))$; see \cite[Sections III and IV]{IZ_2019}. Moreover, we see that 
the edge homomorphism 
\[
edge_2 := H(inc_*) : \check{H}(X)  \stackrel{\cong}{\longrightarrow} H^*(\text{Tot} \ C^{*,*})
\]
is an isomorphism, where 
$inc_*$ is the map induced by the inclusion $ \text{map}({\mathcal G}, \K) \to \Omega^0({\mathcal N}_X)$ to the constant functions on ${\mathcal N}_X$.  

In the constructions of the two spectral sequence above, it is possible to replace the de Rham complex $\Omega^*(X)$ 
with the singular de Rham complex $A^*(X)$. 
While the map $edge_1 : H^*(A^*(X)) \to {}^{d'}\!E_2^{0, *}$ for $A^*(X)$ is merely a morphism of algebras, the map $edge_2$ for $A^*(X)$ is an isomorphism. 
In fact, Lemma \ref{lem:acyclic} and Theorem \ref{thm:main} imply that the Poincar\'e lemma for the singular de Rham complex holds. 
Since the factor map $\alpha$ gives rise to a natural transformation $\Omega^*( \ ) \to A^*(\ )$ 
(see Proposition \ref{prop:alpha_beta} (ii)),  it follows that the map induces a morphism 
$\{f(\alpha)_r\} : \{ {}^d\!E_r^{*, *}, d_r\} \to \{{}^{d'}\!E_r^{*, *}, d_r\}$ 
of spectral sequences.  
As a consequence, we have a commutative diagram 
\[
 \xymatrix@C25pt@R5pt{
 H^*(\Omega^*(X)) \ar[r]^-{edge_1}_-{\cong} \ar[dd]_{H(\alpha)} & 
  {}^d\!E_2^{0, *} \ar@{->>}[r] \ar[dd]_{f(\alpha)_2}
 &  {}^d\!E_\infty^{0, *} \ar@{>->}[r]  \ar[dd]_{f(\alpha)_\infty} & H^*(\text{Tot} \ C^{*,*})  \ar[dd]_{H^*(\text{Tot}(\alpha))} &\\
 & & & &  \check{H}^*(X), \ar[lu]_-{edge_2}^-{\cong}  \ar[ld]^-{edge_2}_-{\cong} \\
 H^*(A^*(X)) \ar[r]^-{edge_1} &  {}^{d'}\!E_2^{0, *} \ar@{->>}[r] 
 &  {}^{d'}\!E_\infty^{0, *} \ar@{>->}[r] & H^*(\text{Tot} \ 'C^{*,*}) & 
 }
  \eqnlabel{add-12}
\] 
where $\text{Tot}(\alpha)$ denotes the cochain map induced by the morphism $C^{*,*} \to \! \ 'C^{*,*}$ of double complexes which $\alpha : A^*(X) \to \Omega^*(X)$ gives. In particular, it follows that $H^q(\text{Tot}(\alpha))$ is an isomorphism for any $q$. We call the composite of maps in the first line in the diagram (6.3) the 
{\it edge homomorphism} of the \v{C}ech--de Rham spectral sequence for $X$. 
Then the commutative diagram above enables us to deduce the following proposition. 

\begin{prop}\label{prop:obstruction_for_Inj}
If the edge homomorphism 
$H^q(\Omega^*(X)) \to \check{H}^q(X)$ is invective, then the map $H^q(\alpha)$ is injective.  
\end{prop}

In the first quadrant spectral sequence $\{{}^d\!E_r^{*,*}, d_r\}$, the differential $d_r$ is of degree $(-r+1, r)$, 
namely $d_r:  {}^d\!E_r^{p,q} \to {}^d\!E_r^{p-r+1, q+r}$. 
Observe the grading of the filtration defined in \cite{IZ_2019}. It is readily seen that the sufficient condition in 
Proposition \ref{prop:obstruction_for_Inj} is equivalent to saying that 
every elements in ${}^d\!E_2^{0, q}$ in the  \v{C}ech--de Rham  spectral sequence is non-exact. 
By degree reasons, we see that each element in ${}^d\!E_2^{0, 1}$ is non-exact. Then Proposition \ref{prop:obstruction_for_Inj} gives 
another proof of Proposition \ref{injectivity_of_H(a)}. 

\begin{rem}
Iglesias-Zemmour has introduced a linear map $b_q : H^q(\Omega^*(X)) \to \check{H}^q(X)$, 
which is called the $q$th {\it \v{C}ech--de Rham homomorphism}, in \cite[Subsection 12]{IZ_2019}. By definition, we see that 
the map $b_k$ is nothing but the edge homomorphism for $X$ mentioned above.
\end{rem}

\end{document}